\newcommand{\keywords}[1]{\par\addvspace\baselineskip
\noindent\keywordname\enspace\ignorespaces#1}
\def\sE{{\mathscr E}}
\def\sF{{\mathscr F}}
\def\sC{{\mathscr C}}
\def\cB{{\mathcal{B}}}
\def\cE{{\mathcal{E}}}
\def\cF{{\mathcal{F}}}
\def\cL{{\mathcal{L}}}
\def\cA{{\mathcal{A}}}
\def\bR{{\mathbb{R}}}
\def\sG{{\mathscr G}}
\def\sA{{\mathscr A}}
\def\sL{{\mathscr L}}
\def\cD{{\mathcal{D}}}
\def\fR{{\mathfrak{R}}}
\def\bP{{\mathbb{P}}}
\def\tp{{\mathtt{p}}}
\def\bfP{{\mathbf{P}}}
\def\bfQ{{\mathbf{Q}}}
\def\rc{{\mathrm{c}}}
\def\bN{{\mathbb{N}}}
\def\ts{{\mathtt{s}}}
\def\re{{\mathrm{e}}}
\def\fm{{\mathfrak{m}}}
\def\b0{{\textbf{0}}}
\def\eps{\varepsilon}
\def\wh{\widehat}
\def\<{\langle}
\def\>{\rangle}
\newcommand{\IE}{{\mathbb{E}}}
\newcommand{\IP}{{\mathbb{P}}}
\newcommand{\IR}{{\mathbb{R}}}
\begin{document}


\title{Distorted Brownian motions on space with varying dimension}

\titlerunning{Distorted Brownian motions with varying dimension}

%
%

\author{Liping Li}
\institute{RCSDS, HCMS, Academy of Mathematics and Systems Science, Chinese Academy of Sciences, Beijing 100190, China.}

\author{Liping Li%
\thanks{The first named author is partially supported by NSFC (No. 11688101, No. 11801546 and No. 11931004) and Key Laboratory of Random Complex Structures and Data Science, Academy of Mathematics and Systems Science, Chinese Academy of Sciences (No. 2008DP173182).}%
\and Shuwen Lou
}
\authorrunning{L. Li, S. Lou}

\institute{RCSDS, HCMS, Academy of Mathematics and Systems Science,\\ Chinese Academy of Sciences, Beijing, China.\\
\mailsa\\
$\quad$\\
Loyola University Chicago, Chicago, USA \\
\mailsb\\
}

%
%

\maketitle

\begin{abstract}
Roughly speaking, a space with varying dimension consists of at least two components with different dimensions. In this paper we will concentrate on the one, which can be treated as $\bR^3$ tying a half line not contained by $\bR^3$ at the origin. The aim is twofold. On one hand, we will introduce so-called distorted Brownian motions on this space with varying dimension (dBMVDs in abbreviation) and study their basic properties by means of Dirichlet forms. On the other hand, we will prove the joint continuity of the transition density functions of these dBMVDs and derive the short-time heat kernel estimates for them.

\keywords{Distorted Brownian motions, Dirichlet forms, varying dimension, heat kernel estimates}
\end{abstract}

\section{Introduction}\label{Sec-intro}

The concept of \emph{distorted Brownian motion} (dBM in abbreviation) arises in mathematical physics. As in e.g. \cite{AHS77}, it is introduced on a usual Euclidean space $\bR^d$ by an energy form given by the closure of the quadratic form on $L^2(\bR^d,m)$:
\begin{equation}\label{eq-def-dbm}
	\mathcal{E}(f,g):=\frac{1}{2}\int_{\bR^d}\nabla f(x)\nabla g(x)m(dx), \quad f,g\in C_c^\infty(\bR^d),
\end{equation}
where $m(dx)=\phi(x)^2dx$ is a certain Radon measure on $\bR^d$. This energy form is indeed a so-called regular Dirichelt form, and according to a well-known one-to-one correspondence by Fukushima (see e.g. \cite{FOT}), the associated diffusion process of it is named the distorted Brownian motion on $\bR^d$. The study of this 
stochastic model has been applied to many different physical fields. For example, it was utilized to analyse point interactions in quantum mechanics as in \cite{AGH05}, and Cranston et al. (see e.g. \cite{CKMV1, CKMV2, CM}) brought it to certain polymer models in statistical physics to observe related phenomena of phase transitions. This diffusion process also receives considerable attention of mathematical researchers. It plays an important role in the theory of stochastic analysis, stochastic differential equations (SDEs in abbreviation), and etc. The generalization of dBM to the one on infinite dimensional state space is also of great value to the study of infinite dimensional stochastic analysis, see e.g. \cite{RT15} and the references therein. 

What is a space with varying dimension? Roughly speaking, it consists of at least two components with different dimensions tying together. For example, $\bR^d$ tying a half line not contained by $\bR^d$ at the origin is exactly a space with varying dimension $(1,d)$ for $d\geq 2$. More mathematically,
\begin{equation}\label{eq-E-def}
	E:=\left\{(x_1,\cdots, x_d, r)\in \bR^{d+1}: x_1=\cdots =x_d=0, r\geq 0\right \} \cup \left\{(x_1,\cdots, x_d, r)\in \bR^{d+1}: r=0\right\}
\end{equation} 
is such an example. It was the second named author and her co-author in \cite{CL}, who initiated the study of stochastic model of Brownian motions on the space with varying dimension $E$ (BMVD in abbreviation) for $d=2$. The crucial challenge thereof is that a two-dimensional Brownian motion does not hit the origin, resulting in that the desired model starting from the two-dimensional part of $E$ can hardly reach the tying half line. To overcome it, a so-called ``darning" method was utilized in \cite{CL} to set the resistance on a two-dimensional disc centred at the origin equal to zero. At a heuristic level, this method collapses this disc into an abstract point, which the half line (becoming a pole indeed) is tied to rather than the origin. Since a two-dimensional Brownian motion can hit every disc, the ``darning" BMVD has a chance to climb the tying pole. 

In this paper, we aim to introduce and study \emph{distorted Brownian motions on the state space with varying dimension} $E$ (dBMVDs in abbreviation). To do this, it is still a challenge that a distorted Brownian motion on $\bR^d$ for $d\geq 2$ seldom hits a singleton. Analogical darning method definitely works but is somewhat cumbersome, since it may make the consideration much involved as we see in \cite{CL, Lou}. Fortunately, the presence of flexible density function $\phi^2$ in \eqref{eq-def-dbm} makes us in the hope of finding out suitable multi-dimensional models, which can hit the origin at times, as the candidates of multi-dimensional building block of dBMVD. Such examples given by the density functions
\begin{equation}\label{eq-def-density}
\phi(x)=\psi_\gamma(x):=\frac{\re^{-\gamma |x|}}{2\pi |x|},\quad x\in \bR^3
\end{equation}
with $d=3$ and a positive constant $\gamma>0$ appeared in a series of papers of Cranston et. al (see e.g. \cite{CM, CKMV1, CKMV2}) to model the probabilistic counterparts of phase transition phenomena in certain polymer models. Later in \cite{FL17}, Fitzsimmons and the first named author described the associated process $X^3$ (the superscript ``$3$'' stands for dimension) in terms of Dirichlet forms. Heuristically, $X^3$ corresponds to an informal Schr\"odinger operator
\begin{equation}\label{eq-def-cL}
	\mathcal{L}_\gamma =\frac{1}{2}\Delta +\beta_\gamma \cdot \delta_0,
\end{equation}
where $\Delta$ is the three-dimensional Laplacian, $\beta_\gamma$ is a certain constant depending on $\gamma$ and $\delta_0$ is the Dirac function at the origin. Rigorous interpretation of $\cL_\gamma$ will be stated in \S\ref{SEC22}. Because of the infinite potential manifested by $\delta_0$, the process $X^3$ feels a strong push towards the origin. As a consequence, although $X^3$ obviously does not hit the singleton $\{x\}$ for $x\neq 0$, it turns out in \cite[Proposition~3.1]{FL17} that $X^3$ starting from everywhere can reach the origin in finite time with positive probability (in fact, with probability $1$!). With these three-dimensional processes at hand, we are motivated to consider stochastic models on $E$ with $d=3$ that arise more naturally. It is worth pointing out that for $d=1$, under a mild condition that $1/\phi^2$ is locally integrable, a one-dimensional distorted Brownian motion $X^1$ is always irreducible in the sense that for all $x,y\in \bR$, $X^1$ starting from $x$ can reach $y$ in finite time (see e.g. \cite{L18, LY172}). As a result, we have a host of processes as the candidates of one-dimensional building block of dBMVD.

Below we give a general description to the dBMVDs that interest us in this paper. Let $E$ be in \eqref{eq-E-def} with $d=3$, and denote its one-dimensional part and three-dimensional part by $\fR_+$ and $\fR^3$ respectively. Clearly $\fR_+\cong \bR_+:=[0,\infty)$ and $\fR^3\cong \bR^3$. The isomorphic mapping from $\bR_+$ (resp. $\bR^3$) to $\fR_+$ (resp. $\fR^3$) is denoted by $\iota_+$ (resp. $\iota_3$). A first step towards dBMVD on $E$ is to introduce a one-dimensional dBM $X^+$ on $\bR_+$ and a three-dimensional dBM $X^3$ on $\bR^3$ respectively as follows: $X^+$ is given by a regular Dirichlet form $(\cE^+,\cF^+)$ on $L^2(\bR_+,m_+):=L^2(\bR_+, \rho(r)dr)$:
\[
\begin{aligned}
	&\cF^+:=\left\{f\in L^2(\bR_+,m_+): f'\in L^2(\bR_+,m_+) \right\}, \\
	&\cE^+(f,g):=\frac{1}{2}\int_{\bR_+}f'(r)g'(r)m_+(dr),\quad f,g\in \cF^+,
\end{aligned}
\]
where $\rho$ is a positive, locally integrable function such that $1/\rho\in L^1_\mathrm{loc}(\bR_+)$. This diffusion is irreducible and reflecting at $0\in \bR_+$. As mentioned above, $X^3$ is determined by the energy form \eqref{eq-def-dbm} with $d=3$ and $\phi$ in \eqref{eq-def-density}, whereas $\gamma$ is taken to be an arbitrary real number. We should point out that almost all statements about $X^3$ in \cite{FL17} can be extended to the case $\gamma\leq 0$. For readers' convenience, some results concerning $X^3$ for all $\gamma\in \bR$ will be repeated in \S\ref{SEC2-1}. Particularly, the origin is of positive capacity relative to $X^3$, so that $X^3$ can always reach the origin in finite time. A new theorem, i.e. Theorem~\ref{THM3}, characterizes the relation between $X^3$ and three-dimensional Brownian motion (via $h$-transform), as will be utilized to estimate the heat kernel of dBMVD in \S\ref{Sec-short-time-HKE}. By embedding these two dBMs in $E$, we obtain $M^+:=\iota_+(X^+)$ and $M^3:=\iota_3(X^3)$, whose associated Dirichlet forms are denoted by $(\sE^+,\sF^+)$ and $(\sE^3,\sF^3)$ respectively. Take a constant $\tp>0$, and set a measure $\fm:=\tp\cdot \fm_+$ on $\fR_+$ and $\fm:=\fm_3$ on $\fR^3$, where $\fm_+:=m_+\circ \iota^{-1}_+$ and $\fm_3:=\left(\psi_\gamma(x)^2dx\right)\circ \iota^{-1}_3$. Then the desired dBMVD (with parameter $\tp$) is by definition an $\fm$-symmetric irreducible diffusion, denoted by $M:=(M_t)_{t\geq 0}$, of no killing at the origin $\b0$ such that
\begin{description}
\item{\rm (i)} The part process of $M$ on $\fR_+\setminus \{\b0\}$ is equivalent to that of $M^+$;
\item{\rm (ii)} The part process of $M$ on $\fR^3\setminus \{\b0\}$ is equivalent to that of $M^3$. 
\end{description}
The uniqueness of dBMVD in law is a consequence of \cite[Theorem~7.5.4]{CF}, as will be explained below its rigorous definition, i.e. Definition~\ref{definition-dBMVD}. The existence is concluded in Theorem~\ref{THM2} by piecing together $(\sE^+,\sF^+)$ and $(\sE^3,\sF^3)$ at the origin. More precisely, $M$ is indeed associated with
\[
\begin{aligned}
&\sF:=\left\{ f\in L^2(E,\fm): f|_{\fR_+}\in \sF^+, f|_{\fR^3}\in \sF^3, {}^+\widetilde{f|_{\fR_+}}(\b0)={}^3\widetilde{f|_{\fR^3}}(\b0) \right\},\\
&\sE(f,g):=\tp\cdot \sE^+(f|_{\fR_+}, g|_{\fR_+})+\sE^3(f|_{\fR^3}, g|_{\fR^3}),\quad f,g\in \sF,
\end{aligned}
\]
where ${}^+\tilde{f}$ (resp. ${}^3\tilde{f}$) stands for the $\sE^+$-quasi-continuous (resp. $\sE^3$-quasi-continuous) version of $f$. The dBMVD $M$ is shown to be a very nice diffusion as can reach everywhere and never explode in finite time, but the singularity happens at the origin. At a heuristic level, $M$ runs like $M^+$ or $M^3$ out of the origin, and once upon hitting $\b0$, it will decide to reflect at $\b0$ or to pass through $\b0$ to the other part of $E$. The parameter $\tp$ plays the role of ``skew'' constant weighing the possibilities of these two choices. Appealing to \eqref{EQ4DYD}, a bigger $\tp$ brings $M$ more opportunities to pass through $\b0$ from $\fR^3$ to $\fR_+$.


One of our main purposes is to derive the short-time heat kernel estimate for $M$.  The existence of the heat kernel $p(t,x,y)$ of $M$ with respect to its symmetric measure $\fm$ will be phrased in Proposition~\ref{PRO5} by an argument involving classical potential theory. Then the main result Theorem~\ref{main-thm} concludes the joint continuity of $p(t,x,y)$ on $(0,\infty)\times E\times E$ and obtains its explicit estimate for all $0<t\leq T$ with a fixed constant $T>0$ by splitting into three cases: (i) $x,y\in \fR_+$; (ii) $x\in \fR_+$ and $y\in \fR^3$; (iii) $x,y\in \fR^3$. Note incidentally that the case $x\in \fR^3, y\in \fR_+$ can be turned to (ii) because of the symmetry of $p(t,x,y)$ in $x$ and $y$. The approach to this estimate is based on the method of signed radial process developed in \cite{CL}. The signed radial process of $M$ is defined by $Y_t:=u(M_t)$ where $u(x):=|x|$ for $x\in \fR^3$ and $u(x):=-|x|$ for $x\in \fR_+$. It turns out to be a dBM on $\bR$, which can be realized as a skew Brownian motion transformed through a drift perturbation (i.e. Girsanov transform) by appealing to its pathwise representation (or associated well-posed SDE) \eqref{EQ4DYD}. Then a standard but very careful consideration derives the joint continuity of the heat kernel $\wh p^Y$ of $Y$ with respect to the symmetric measure of related skew Brownian motion (not Lebesgue measure!) and obtains a Gaussian type estimate for $\wh p^Y$ in Lemma~\ref{short-time-HKE-radial}. With this crucial lemma at hand, we eventually accomplish the estimate of $p(t,x,y)$ by utilizing the rotational invariance of $M$: For most cases including that $M$ runs on $\fR_+$ and that $M$ goes through $\b0$, the transition density is a radial function depending on $|x|$ and $|y|$ only, and for the reminder case that $M$ runs on $\fR^3\setminus \{\b0\}$, we bring into play Theorem~\ref{THM3} as mentioned early. The expression of $p(t,x,y)$ is explicitly determined by that of $\wh p^Y$ as we see in \eqref{eq-expression-p-12} and \eqref{eq-bar-q}, and this also leads to the joint continuity of $p(t,x,y)$. 

Though we will concentrate on dBMVDs with the specific three-dimensional component $X^3$, there are far more relevant stochastic models on space with varying dimension, which are of great interest to us. For example, the two-dimensional analogue of \eqref{eq-def-cL} exists as discussed in \cite[Appendix~F]{AGH05} (but there are natural obstructions to our story when $d\geq 4$), and in principle, we can formulate the two-dimensional analogue of $X^3$ and build a new dBMVD on $E$ with $d=2$. The major difficulty is that, unlike $\psi_\gamma$ in \eqref{eq-def-density}, the function $\phi$ for this case is not explicitly expressed, so that we have much more to do for accomplishing this analogical model. To be more general, a framework of dBMs on $\bR^d$ in \cite{FO89} characterizes their skew product decomposition in terms of Dirichlet forms. Roughly speaking, when $\phi(x)=\hat{\phi}(|x|)$ is a radial function depending only on $|x|$, it is possible to express a dBM $X$ in polar coordinates system as $X_t=\varrho_t \vartheta_{A_t}$, where $\varrho:=(\varrho_t)_{t\geq 0}$ is a diffusion on $\bR_+$, $\vartheta:=(\vartheta_t)_{t\geq 0}$ is the spherical Brownian motion on $S^{d-1}:=\{x\in \bR^d: |x|=1\}$ and $A:=(A_t)_{t\geq 0}$ is a ``clock" depending on $\varrho$. Under certain conditions additional to $1/(\hat{\phi}(r)^2r^{d-1})\in L^1_\mathrm{loc}(\bR_+)$ (this is clearly satisfied by $\psi_\gamma$), $\varrho$ can reach $0$ in finite time, and hence $X$ can reach the origin as well. Using this $X$ as multi-dimensional building block, general dBMVDs on $E$ with arbitrary $d\geq 2$ are possibly built, and we hope to explore them in the future. Another more involved density function
\[
	\phi(x)=\sum_{n=1}^N c_n\cdot \frac{\re^{-\gamma_n |x-x_n|}}{|x-x_n|},
\]
where $N$ is an integer, $c_n$ is a positive constant, $\gamma_n\in \bR$ and $\{x_n: 1\leq n\leq N\}\subset \bR^3$, gives a dBM on $\bR^3$ with $N$ centers $x_n$ manifesting strong attraction like the origin for $X^3$. We may tie a half line at each $x_n$ and get a new space with varying dimension. The study of dBMVDs on this new space is definitely an interesting plan of our future work as well. 

The rest of this paper is organized as follows. In \S\ref{SEC2-1} we introduce the dBM $X^3$ on $\bR^3$ with a parameter $\gamma\in \bR$. It associated Dirichlet form is formulated in Theorem~\ref{THM1}, and its global properties are utilized in \S\ref{SEC22} to observe a phase transition of related polymer model. Then the subsections \S\ref{SEC23} and \S\ref{SEC24} provide two characterizations of $X^3$ via $h$-transform and skew-product decomposition respectively. The short section \S\ref{SEC25} derives the pathwise representation of $X^3$ by means of Fukushima's decomposition, and particularly concludes that $X^3$ is not a semimartingale. 
The section \S\ref{SEC2-2} is devoted to the basic description of dBMVD $M$ on $E$. Its Dirichlet form characterization is stated in Theorem~\ref{THM2}, and some basic facts for it are summarized in the reminder subsections. Particularly, the existence of the transition density $p(t,x,y)$ of $M$ is obtained in Proposition~\ref{PRO5}. From \S\ref{SEC4}, we turn to the short-time heat kernel estimate for $M$. In \S\ref{SEC4} we study the signed radial process $Y$ of $M$. The associated Dirichlet form of $Y$ is obtained in Proposition~\ref{PRO6} and then we formulate its associated well-posed SDE in Theorem~\ref{THM6}, which shows that $Y$ is a skew Brownian motion transformed by a drift perturbation. The last section \S\ref{Sec-short-time-HKE} completes the short-time heat kernel estimate for $M$ by using a crucial lemma, i.e. Lemma~\ref{short-time-HKE-radial}, which obtains a Gaussian type heat kernel estimate for the signed radial process $Y$. 

Here are some notations for handy reference. 
We follow the convention that in the statements of the theorems or propositions $C, C_1, \cdots$ denote positive constants, whereas in their proofs $c, c_1, \cdots$ denote positive constants whose exact values are unimportant and may change from line to line.  In the meanwhile, the symbol $\lesssim$ (resp. $\gtrsim$) means that the left (resp. right) term is bounded by the right (resp. left) term multiplying a non-essential constant. Let $D\subset \bR^d$ be a domain. Then $C_c^\infty(D)$ is the family of all smooth functions with compact support in $D$. We use $\b0$ to denote the origin of $E$ (i.e. the origin of $\bR^4$), but the origin of $\bR^3$ or $\bR$  is simply denoted by $0$. The notation $|\cdot|$ stands for the Euclidean distance on $\bR^d$.

\section{Distorted Brownian motions on $\mathbb{R}^3$}\label{SEC2-1}

In this section, we give a brief overview for a special family of three-dimensional dBMs, which will be used as building blocks to construct dBMVDs in \S\ref{SEC2-2}. Fix a constant $\gamma\in \bR$ and set
\begin{equation}\label{EQ1PGX}
\psi_\gamma(x):= \frac{\mathrm{e}^{-\gamma|x|}}{2\pi|x|},\quad x\in \mathbb{R}^3.
\end{equation}
Further set a measure $m_\gamma(dx):=\psi_\gamma(x)^2dx$ on $\bR^3$. It is easy to verify that $m_\gamma$ is a positive Radon measure with full support, and $m_\gamma$ is finite, i.e. $\psi_\gamma \in L^2(\bR^3)$, if and only if $\gamma>0$. Define an energy form on $L^2(\mathbb{R}^3,m_\gamma)$ as follows:
\[
\begin{aligned}
\mathcal{F}^3&:=\left\{f\in L^2(\mathbb{R}^3,m_\gamma): \nabla f\in L^2(\mathbb{R}^3,m_\gamma)\right\}, \\
\mathcal{E}^3(f,g)&:=\frac{1}{2}\int_{\mathbb{R}^3}\nabla f(x)\cdot \nabla g(x)m_\gamma(dx),\quad f,g\in \mathcal{F}^3,
\end{aligned}
\]
where $\nabla f$ stands for the weak derivative of $f$. We shall write $(\mathcal{E}^{3,\gamma},\sF^{3,\gamma})$ for $(\mathcal{E}^3,\sF^3)$ when there is a risk of ambiguity.

\subsection{Associated dBM}

Some basic facts about $(\mathcal{E}^3,\mathcal{F}^3)$ are collected in the following theorem. Since  $(\mathcal{E}^3,\mathcal{F}^3)$ is indicated to be a regular Dirichlet form, we denote its associated dBM  by $X^3=\{(X^3_t)_{t\geq 0}, \left(\mathbf{P}_x^3\right)_{x\in\bR^3}\}$ henceforth. Note that the case $\gamma>0$ has been considered in \cite{FL17} but we present a different (and simpler) proof as below.

\begin{theorem}\label{THM1}
The following statements hold:
\begin{itemize}
\item[(i)] $(\mathcal{E}^3,\mathcal{F}^3)$ is a regular and irreducible Dirichlet form on $L^2(\mathbb{R}^3, m_\gamma)$ with $C_c^\infty(\mathbb{R}^3)$ being its special standard core. 
\item[(ii)] When $\gamma\geq 0$, $(\mathcal{E}^3,\mathcal{F}^3)$ is recurrent. When $\gamma<0$, it is transient. 
\end{itemize}
\end{theorem}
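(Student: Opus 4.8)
The first goal is to confirm that $(\mathcal E^3,\mathcal F^3)$ is a closed symmetric Markovian form, after which regularity and irreducibility can be treated separately. The plan is to prove closedness by exploiting that on every ball $B_R$ the weight $\psi_\gamma(x)^2=\re^{-2\gamma|x|}/(4\pi^2|x|^2)$ is bounded above and below by positive constants, so that $L^2(B_R,m_\gamma)$ embeds into $L^2(B_R,dx)$ and $\mathcal E^3_1$-convergence forces convergence in $W^{1,2}_{\mathrm{loc}}(\bR^3)$; identifying the $L^2(m_\gamma)$-limit of the gradients with the weak gradient of the $L^2(m_\gamma)$-limit of the functions then yields closedness. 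The Markovian property is routine: the unit contraction $f^\#=(0\vee f)\wedge 1$ satisfies $|f^\#|\le|f|$ and $|\nabla f^\#|\le|\nabla f|$ pointwise by the chain rule for Sobolev functions, so $f^\#\in\mathcal F^3$ with $\mathcal E^3(f^\#,f^\#)\le\mathcal E^3(f,f)$.

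For the core property I would show that $C_c^\infty(\bR^3)$ is $\mathcal E^3_1$-dense in $\mathcal F^3$ (uniform density in $C_c(\bR^3)$ being Stone--Weierstrass). Given $f\in\mathcal F^3$, the scheme is: (a) truncate at infinity by a cutoff $\chi_R$, controlling the extra term through $\int_{R\le|x|\le 2R}|f|^2|\nabla\chi_R|^2\psi_\gamma^2\,dx\le CR^{-2}\int_{R\le|x|\le2R}|f|^2\,dm_\gamma\to0$ (and likewise for the gradient), reducing to compactly supported $f$; (b) mollify. The delicate point, and what I expect to be the main obstacle, is that the mollification must converge in the $\psi_\gamma^2$-weighted $L^2$-norm despite the $|x|^{-2}$ singularity at the origin. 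This is resolved by the observation that $|x|^{-2}$ is a Muckenhoupt $A_2$ weight on $\bR^3$ (since $-3<-2<3$): for $A_2$ weights the maximal operator is bounded on the weighted $L^2$-space, whence $g*\rho_\eps\to g$ in $L^2(|x|^{-2}dx)$ for every such $g$, applied to $f$ and to each component of $\nabla f$. Alternatively one can estimate by hand using the near-flat radial structure $\psi_\gamma^2\,dx\approx(4\pi^2)^{-1}\,dr\,d\omega$ near the origin. Irreducibility then follows from strong locality and connectedness of $\bR^3$: if $B$ is invariant then $\mathbf 1_B\in\mathcal F^3_{\mathrm{loc}}$ with $\nabla\mathbf 1_B=0$ a.e., so $\mathbf 1_B$ is $m_\gamma$-a.e.\ constant and $m_\gamma(B)\wedge m_\gamma(B^c)=0$.

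The heart of (ii) is a reduction to the explicit one-dimensional weighted problem obtained by restricting to radial functions: for $u(x)=\phi(|x|)$ one computes $\mathcal E^3(u,u)=\tfrac1{2\pi}\int_0^\infty\phi'(r)^2\re^{-2\gamma r}\,dr$, so the recurrence/transience dichotomy is governed by the one-dimensional conductance $\re^{-2\gamma r}$ at infinity, i.e.\ by whether the scale $\int^\infty\re^{2\gamma r}\,dr$ diverges. For $\gamma>0$ the measure $m_\gamma$ is finite, so $1\in\mathcal F^3$ with $\mathcal E^3(1,1)=0$ and recurrence is immediate from the Fukushima criterion (see \cite{FOT}); for $\gamma=0$ I would take radial cutoffs $u_n$ equal to $1$ on $B_n$ and decaying linearly to $0$ on $B_{2n}\setminus B_n$, giving $\mathcal E^3(u_n,u_n)=\tfrac1{2\pi n}\to0$ with $u_n\to1$ a.e., again yielding recurrence.

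For $\gamma<0$ I would prove transience by ruling out recurrence and invoking the dichotomy for irreducible forms. If $u_n\in\mathcal F^3$, $0\le u_n\le1$, $u_n\to1$ a.e.\ and $\mathcal E^3(u_n,u_n)\to0$, pass to the spherical averages $\phi_n$, note that $u_n\in L^2(m_\gamma)$ forces $\int_0^\infty\phi_n^2\re^{2|\gamma|r}\,dr<\infty$ and hence $\phi_n(\infty)=0$, and then Cauchy--Schwarz applied to $\phi_n(a)=-\int_a^\infty\phi_n'(r)\,dr$ gives $\int_a^\infty\phi_n'^2\re^{2|\gamma|r}\,dr\ge 2|\gamma|\re^{2|\gamma|a}\phi_n(a)^2\to2|\gamma|\re^{2|\gamma|a}>0$, so $\mathcal E^3(u_n,u_n)$ stays bounded away from $0$, a contradiction. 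The structural fact making this work is that $\gamma<0$ renders the scale $\int^\infty\re^{-2|\gamma|r}\,dr$ finite, which is exactly transience of the radial motion at infinity; thus transience of the full form follows once irreducibility has been established in (i).
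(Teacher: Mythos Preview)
Your argument is correct overall and largely parallels the paper's proof, with one harmless misstatement and one genuine methodological difference worth noting.

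The misstatement: the weight $\psi_\gamma(x)^2=\re^{-2\gamma|x|}/(4\pi^2|x|^2)$ is \emph{not} bounded above on any ball containing the origin (it blows up like $|x|^{-2}$). Fortunately your closedness argument only requires the lower bound, which does hold on every $B_R$, so the embedding $L^2(B_R,m_\gamma)\hookrightarrow L^2(B_R,dx)$ and the passage to $W^{1,2}_{\mathrm{loc}}$ survive unchanged.

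The substantive difference lies in the transience proof for $\gamma<0$. You argue by contradiction through spherical averages and a Cauchy--Schwarz scale estimate, effectively detecting that the radial process has finite scale at infinity. The paper instead exploits the elementary pointwise inequality $\psi_\gamma(x)\ge|\gamma|/(2\pi)$ (valid for all $x$ precisely when $\gamma<0$), which gives $\mathcal E^{3,\gamma}(f,f)\ge\frac{\gamma^2}{8\pi^2}\mathbf D(f,f)$ for $f\in C_c^\infty(\bR^3)$ and then invokes the comparison criterion \cite[Theorem~1.6.4]{FOT} with the transient three-dimensional Brownian motion. The paper's route is shorter and avoids the technical verification that spherical averages of $\mathcal F^3$-functions are absolutely continuous with $\phi_n(\infty)=0$; your route is more self-contained and makes explicit that the mechanism is one-dimensional, which would generalise to weights not admitting such a convenient pointwise lower bound. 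For the core and the $\gamma\ge0$ recurrence, both proofs rely on the Muckenhoupt $A_2$ property of $|x|^{-2}$ and on radial cutoffs $\eta(\cdot/n)$ respectively, so there the arguments coincide.
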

\begin{proof}
Recall that $(\mathcal{E}^{3, \gamma},\mathcal{F}^{3,\gamma})$ also denotes the quadratic form $(\mathcal{E}^3,\mathcal{F}^3)$.
It is straightforward to verify that $(\mathcal{E}^{3,\gamma},\mathcal{F}^{3,\gamma})$ is a Dirichlet form on $L^2(\mathbb{R}^3, m_\gamma)$ and $C_c^\infty(\bR^3)\subset \mathcal{F}^{3,\gamma}$. The irreducibility of $(\mathcal{E}^{3, \gamma},\mathcal{F}^{3,\gamma})$ for the case $\gamma>0$ has been proved in \cite[Proposition~2.4]{FL17}. For the case $\gamma\leq 0$ it can be concluded by the comparison of irreduciblity presented in \cite[Corollary~4.6.4]{FOT}. To show $C_c^\infty(\bR^3)$ is $\mathcal{E}^{3,\gamma}_1$-dense in $\mathcal{F}^{3,\gamma}$, we first note that this is true for the case $\gamma=0$ since $\psi_0$ belongs to the so-called Muckenhoupt's class; see e.g. \cite{K94}. Then it suffices to consider the case $\gamma\neq 0$. Let $\mathcal{F}^{3,\gamma}_\rc$ (resp. $\mathcal{F}^{3,0}_\rc$) be the family of all bounded functions with compact support in $\mathcal{F}^{3,\gamma}$ (resp. $\mathcal{F}^{3,0}$). We first assert that $\mathcal{F}^{3,\gamma}_\rc$ is $\mathcal{E}^{3,\gamma}_1$-dense in  $\mathcal{F}^{3,\gamma}$. To do this, fix $f\in \mathcal{F}^{3,\gamma}$ and assume without loss of generality that $f$ is bounded (see \cite[Theorem~1.4.2]{FOT}). Take  $\eta\in C_c^\infty(\bR^3)$ such that $0\leq \eta\leq 1$ and $\eta\equiv 1$ on $\{x: |x|\leq 1\}$. Set $\eta_n(x):=\eta(x/n)$ and $f_n:=f\cdot \eta_n\in \mathcal{F}^{3,\gamma}_\rc$ for all $n\in \bN$. Since $0\leq \eta_n\leq 1$ and $\eta_n\rightarrow 1$ pointwisely,  it follows from the dominated convergence theorem that 
\[
	\int |f-f_n|^2dm_\gamma=\int |f|^2\cdot |1-\eta_n|^2 dm_\gamma \rightarrow 0
\] 
as $n\uparrow \infty$. On the other hand, 
\[
	\nabla f-\nabla f_n =(1-\eta_n)\nabla f -\frac{f}{n} \nabla \eta\left(\frac{x}{n}\right).
\]
Since $\nabla f, f\in L^2(\bR^3,m_\gamma)$ and $\nabla \eta$ is bounded, one can obtain that 
\[
\int |\nabla f- \nabla f_n|^2dm_\gamma\rightarrow 0. 
\]
Hence $\mathcal{E}^{3,\gamma}_1(f_n-f,f_n-f)\rightarrow 0$ as $n\rightarrow \infty$. Next fix $g\in \mathcal{F}^{3,\gamma}_\rc$ and take $L>0$ such that $\text{supp}[g]\subset \{x: |x|<L\}$. Clearly, there exist two appropriate positive constants $c_1$ and $c_2$ (depending on $L$ and $\gamma$) such that for all $x$ with $|x|<L$, 
\begin{equation}\label{EQ2CPX}
	c_1\psi_0(x)\leq \psi_\gamma(x)\leq c_2\psi_0(x). 
\end{equation}
This implies $g\in \mathcal{F}^{3,0}_\rc$. Then there exists a sequence of functions $g_n\in C_c^\infty(\bR^3)$ with $\text{supp}[g_n]\subset \{x: |x|<L\}$ converging to $g$ relative to the $\mathcal{E}^{3,0}_1$-norm. By using \eqref{EQ2CPX} again, we can conclude that $g_n$ also converges to $g$ relative to the $\mathcal{E}^{3,\gamma}_1$-norm. Therefore,  $(\mathcal{E}^{3, \gamma},\mathcal{F}^{3,\gamma})$ is regular and $C_c^\infty(\bR^3)$ is a special standard core of it. 

The recurrence of $(\mathcal{E}^{3, \gamma},\mathcal{F}^{3,\gamma})$ for the case $\gamma>0$ has been already illustrated in \cite[Proposition~2.4]{FL17}. For the case $\gamma=0$, let $\eta_n$ be as above. Then $\eta_n\in \mathcal{F}^{3,0}$ and $\eta_n\rightarrow 1$ pointwisely. To obtain the recurrence of $(\mathcal{E}^{3, 0},\mathcal{F}^{3,0})$, it suffices to show $\mathcal{E}^{3,0}(\eta_n,\eta_n)\rightarrow 0$ as $n\rightarrow \infty$. Indeed, 
\[
	\mathcal{E}^{3,0}(\eta_n,\eta_n)=\frac{1}{2n^2}\int \left|(\nabla \eta) \left(x/n \right)  \right|^2\frac{dx}{|x|^2}=\frac{1}{2n}\int \left|\nabla \eta (x)\right|^2\frac{dx}{|x|^2}\rightarrow 0. 
\]
Finally consider the case $\gamma<0$. Since $\psi_\gamma(x)\geq |\gamma|/(2\pi)$ for all $x$, it follows that for all $f\in C_c^\infty(\bR^3)$,
\[
	\mathcal{E}^{3,\gamma}(f,f)\geq \frac{\gamma^2}{8\pi^2}\int |\nabla f|^2dx =: \frac{\gamma^2}{8\pi^2} \mathbf{D}(f,f), 
\]
where $\mathbf{D}$ is the Dirichlet integral that induces the associated Dirichlet form of three-dimensional Brownian motion. Clearly, three-dimensional Brownian motion is transient. By virtue of \cite[Theorem~1.6.4]{FOT}, we can conclude the transience of $(\mathcal{E}^{3,\gamma}, \mathcal{F}^{3,\gamma})$. This completes the proof. 
\qed \end{proof}

When $\gamma\geq 0$, $(\mathcal{E}^3,\mathcal{F}^3)$ is not only recurrent but also ergodic in the following sense: For $\gamma>0$ and any $x\in \bR^3$,
\begin{equation}\label{EQ2TTP}
	\frac{1}{t}\int_0^t \mathbf{P}_x^3(X^3_s\in \cdot)ds \rightarrow \frac{m_\gamma(\cdot)}{m_\gamma(\bR^3)}=2\pi\gamma m_\gamma(\cdot),\quad \text{weakly as }t\uparrow \infty.
\end{equation}
For $\gamma=0$, the probability measure on the left hand side is vaguely convergent to $0$ as $t\uparrow \infty$. See e.g. \cite[Theorem~4.7.3]{FOT}. 

\subsection{Generator and motivated polymer model}\label{SEC22}

The dBM $X^3$ is motivated by a singular polymer model explored in e.g. \cite{CKMV2, CKMV1, CM}. Let us use a few lines to explain it. Fix $T>0$ and let $\Omega_T:=C([0,T], \mathbb{R}^d)$, i.e. the family of all continuous paths of size $T$ in $\bR^d$, be the configuration space of the system. Then the polymer model is described by a Gibbs ensemble at each inverse temperature $\beta$ ($\geq 0$), realized as a probability measure $\mathbf{Q}_{\beta, T}$ on $\Omega_T$, which is also called a Gibbs measure. More precisely, the underlying probability measure $\bfQ_{0,T}$ is identified with the Wiener measure on $\Omega_T$ in this model, and we also denote it by $\mathbf{Q}_T$ in abbreviation. For $\beta>0$, $\mathbf{Q}_{\beta,T}$ is determined by the so-called Hamiltonian $H_T$, which is given by a certain potential function $v$ on $\mathbb{R}^d$ in the following manner:
\begin{equation}\label{EQ1HOT}
	H_T(\omega)=-\int_0^T v(\omega(t))dt,\quad \omega\in \Omega_T. 
\end{equation}
In other words, 
\begin{equation}\label{EQ1PTO}
	\mathbf{Q}_{\beta, T}(d\omega)=\frac{\exp\left(-\beta H_T(\omega)\right)}{Z_{\beta,T}}\mathbf{Q}_T(d\omega) =\frac{\exp\left(\beta \int_0^Tv(\omega(t))dt\right)}{Z_{\beta,T}}\mathbf{Q}_T(d\omega),
\end{equation}
where $Z_{\beta,T}:=\mathbf{E}_T\exp\left(-\beta H_T\right)$ is the so-called partition function. The motivated model is with dimension $3$, i.e. $d=3$, and given by a singular potential function $v=\delta_0$, i.e. the delta function at the origin. In this case, the Hamiltonian $H_T$ is understood as a limitation $-\lim_{\varepsilon \downarrow 0} \int_0^T A_\varepsilon \cdot 1_{(-\varepsilon,\varepsilon)}(\omega_t)dt$ in a certain manner, where $A_\varepsilon$ ($\uparrow \infty$ as $\varepsilon\downarrow 0$) is a constant depending on a crucial parameter $\gamma\in \bR$, and meanwhile at a heuristic level the inverse temperature $\beta$ in \eqref{EQ1PTO} is also retaken to be a function of $\gamma$, i.e. $\beta:=\beta_\gamma> 0$ (and $\beta_{-\infty}:=0$); see e.g. \cite{CKMV1}.
There are at least three ways to manifest the phase transition parametrized by $\gamma$ and the critical value is $\gamma_{cr}=0$ --- The first two are already mentioned in \cite{CKMV1} and the last one is due to the Dirichlet form characterization of $X^3$:

\begin{description}
\item[(1)] The first way is to observe the thermodynamic limit of $\bfQ_{\beta_\gamma, T}$ as $T\uparrow \infty$. It can be shown that (see e.g. \cite{CKMV1}) 
	\begin{itemize}
	\item[(i)] When $\gamma>\gamma_{cr}=0$, the limiting measure of $\bfQ_{\beta_\gamma, T}$ under suitable scaling as $T\uparrow \infty$ exists and induces a diffusion process on $\bR^3$. In fact, this process is nothing but $X^3$ obtained in Theorem~\ref{THM1}, which possesses an ergodic distribution $2\pi\gamma m_\gamma(dx)=2\pi\gamma\psi_\gamma(x)^2dx$ (see \eqref{EQ2TTP}). In this case, the ensemble is called in the globular state;
	\item[(ii)] When $\gamma=\gamma_{cr}=0$, the limiting process is mixed Gaussian; 
	\item[(iii)] When $\gamma<\gamma_{cr}=0$, the scaling is taken to be a different one, and the limiting process is nothing but three-dimensional Brownian motion. In this case, the ensemble is called in the diffusive state. 
	\end{itemize}
\item[(2)] The second way is to analyse the spectrum of the informal Schr\"odinger operator
\begin{equation}\label{EQ2DBG}
	\frac{1}{2}\Delta+\beta_{\gamma}\cdot \delta_0: L^2(\bR^3)\rightarrow L^2(\bR^3),
\end{equation}
where $\Delta$ is the Laplacian. 
Note that all self-adjoint extensions on $L^2(\bR^3)$ of $\frac{1}{2}\Delta$ restricting to $C_c^\infty(\bR^3\setminus \{0\})$ can be parametrized by a constant $\gamma\in \{-\infty\}\cup \bR$; see e.g. \cite[Theorem~2.1]{CKMV1}. Denote the family of all these extensions by $\{\cL_\gamma: \gamma=-\infty\text{ or }\gamma\in \bR\}$ and particularly $\cL_{-\infty}=\frac{1}{2}\Delta$ corresponds to the underlying case (Recall that $\beta_{-\infty}=0$). Then \eqref{EQ2DBG} should be understood as $\cL_\gamma$ in a rigorous sense. Denote the spectrum set of $\cL_\gamma$ by $\sigma(\cL_\gamma)$. It is well known that
\begin{itemize}
\item[(i)] When $\gamma>\gamma_{cr}=0$, $\sigma(\cL_\gamma)=(-\infty, 0]\cup \{\gamma^2/2\}$. Moreover, $\psi_\gamma$ is the ground state of $\cL_\gamma$, i.e. 
\[
	\cL_\gamma \psi_\gamma=\frac{\gamma^2}{2}\psi_\gamma,
\]
and $\gamma^2/2$ is exactly the free energy of the ensemble, i.e. 
\[
	\lim_{T\uparrow \infty} \frac{\log Z_{\beta_\gamma, T}}{T}=\frac{\gamma^2}{2};
\]
\item[(ii)] When $\gamma\leq \gamma_{cr}= 0$, $\sigma(\cL_\gamma)=(-\infty, 0]$ and no eigenvalues exist. 
\end{itemize}
\end{description}

The third way is based on the relation between $\cL_\gamma$ and the generator of $(\mathcal{E}^3,\mathcal{F}^3)$. Define an operator $\cA_\gamma$ on $L^3(\bR^3,m_\gamma)$ by an informal $h$-transform as follows:
\begin{equation}\label{generator-3d-dBM}
\begin{aligned}
&\mathcal{D}(\cA_\gamma)=\{f\in L^2(\mathbb{R}^3, m_\gamma): f \psi_\gamma\in \mathcal{D}(\mathcal{L}_\gamma)\}, \\
&\cA_\gamma f=\frac{\mathcal{L}_\gamma(\psi_\gamma f)}{\psi_\gamma}-\frac{\gamma^2}{2} f,\qquad f\in \mathcal{D}(\cA_\gamma). 
\end{aligned}
\end{equation}
It is not hard to verify that $C_c^\infty(\bR^3\setminus \{0\})\subset \cD(\cA_\gamma)$ and for $f\in C_c^\infty(\bR^3\setminus \{0\})$ (see e.g. \cite[(2.3)]{FL17}), 
\[
	\cA_\gamma f=\frac{1}{2}\Delta f +\frac{\nabla \psi_\gamma}{\psi_\gamma}\cdot \nabla f. 
\]
The lemma below links $(\mathcal{E}^3,\mathcal{F}^3)$ with $\cL_\gamma$. 

\begin{lemma}\label{LM1}
The operator $\cA_\gamma$ defined by \eqref{generator-3d-dBM} is the generator of $(\mathcal{E}^3,\mathcal{F}^3)$. 
\end{lemma}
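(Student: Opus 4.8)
The plan is to treat $\cA_\gamma$ as the ground-state (Doob $h$-)transform of $\cL_\gamma$ and to identify the resulting self-adjoint operator with the generator of $(\mathcal{E}^3,\mathcal{F}^3)$. Since $m_\gamma=\psi_\gamma^2\,dx$, the multiplication map
\[
U:L^2(\bR^3,m_\gamma)\to L^2(\bR^3,dx),\qquad Uf:=\psi_\gamma f,
\]
is a unitary isomorphism, and the defining formula \eqref{generator-3d-dBM} reads precisely $\cA_\gamma=U^{-1}\bigl(\cL_\gamma-\tfrac{\gamma^2}{2}\bigr)U$. Because $\cL_\gamma$ is self-adjoint on $L^2(\bR^3,dx)$ and $U$ is unitary, $\cA_\gamma$ is self-adjoint on $L^2(\bR^3,m_\gamma)$; and the spectral description recalled in \S\ref{SEC22} gives $\sup\sigma(\cL_\gamma)\le\gamma^2/2$ (with equality for $\gamma\ge0$, where $\gamma^2/2$ is the top eigenvalue realised by $\psi_\gamma$ when $\gamma>0$), so that $\cL_\gamma-\tfrac{\gamma^2}{2}\le0$ and hence $-\cA_\gamma\ge0$. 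Thus $-\cA_\gamma$ is a non-negative self-adjoint operator. Let $L$ denote the generator of $(\mathcal{E}^3,\mathcal{F}^3)$, i.e.\ the unique non-positive self-adjoint operator with $\mathcal{E}^3(f,g)=-(Lf,g)_{m_\gamma}$ for $f\in\mathcal{D}(L)$ and $g\in\mathcal{F}^3$; the goal is to prove $\cA_\gamma=L$.

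First I would verify the matching on functions supported away from the singularity. For $f\in C_c^\infty(\bR^3\setminus\{0\})$ the explicit expression $\cA_\gamma f=\tfrac12\Delta f+\tfrac{\nabla\psi_\gamma}{\psi_\gamma}\cdot\nabla f$ recalled above rearranges, using $\mathrm{div}(\psi_\gamma^2\nabla f)=\psi_\gamma^2\Delta f+2\psi_\gamma\nabla\psi_\gamma\cdot\nabla f$, into the divergence form
\[
\psi_\gamma^2\,\cA_\gamma f=\tfrac12\,\mathrm{div}\bigl(\psi_\gamma^2\nabla f\bigr).
\]
Pairing with $g\in\mathcal{F}^3$ and integrating by parts produces no boundary term, since $\nabla f$ is compactly supported in $\bR^3\setminus\{0\}$ where $\psi_\gamma$ is smooth, so that $-(\cA_\gamma f,g)_{m_\gamma}=\tfrac12\int_{\bR^3}\nabla f\cdot\nabla g\,m_\gamma(dx)=\mathcal{E}^3(f,g)$. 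Hence $\cA_\gamma$ and $L$ agree on $C_c^\infty(\bR^3\setminus\{0\})$. To upgrade this to full equality I would establish the operator inclusion $\cA_\gamma\subseteq L$: concretely, that every $f\in\mathcal{D}(\cA_\gamma)$ lies in $\mathcal{F}^3$ and satisfies $\mathcal{E}^3(f,g)=-(\cA_\gamma f,g)_{m_\gamma}$ for all $g$ in a core of $\mathcal{F}^3$ (say $C_c^\infty(\bR^3)$); by continuity in $g$ this puts $f\in\mathcal{D}(L)$ with $Lf=\cA_\gamma f$. Since $\cA_\gamma$ and $L$ are both self-adjoint and a self-adjoint operator admits no proper symmetric extension, the inclusion $\cA_\gamma\subseteq L$ already forces $\cA_\gamma=L$.

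The hard part will be the origin. Because $\{\b0\}$ has positive capacity for $X^3$ --- the defining feature of the model --- the punctured core $C_c^\infty(\bR^3\setminus\{0\})$ is \emph{not} dense in $\mathcal{F}^3$, its $\mathcal{E}^3_1$-closure being only the part-form on $\bR^3\setminus\{0\}$; both $\mathcal{D}(\cA_\gamma)$ and the test functions $g$ genuinely carry a nonzero value at $\b0$, so agreement on the punctured core cannot by itself determine $L$. Repeating the integration by parts for general $f\in\mathcal{D}(\cA_\gamma)$ and $g\in C_c^\infty(\bR^3)$ now leaves a surface term on $\{|x|=\eps\}$ whose limit as $\eps\downarrow0$ is nontrivial because of the $1/|x|$ singularity of $\psi_\gamma$; moreover the naive cancellation of gradients in the transported form fails pointwise, since each separate piece is non-integrable near $0$, so the boundary contribution must be tracked rather than discarded. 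I would evaluate it through the near-origin expansion of the elements of $\mathcal{D}(\cL_\gamma)$: writing $\psi_\gamma f(x)=\tfrac{a}{4\pi|x|}+b+o(1)$ with the linear relation between $a$ and $b$ that defines the self-adjoint extension $\cL_\gamma$ (the point-interaction boundary condition of \cite{CKMV1,AGH05}), one checks that the surface term vanishes and the required identity $\mathcal{E}^3(f,g)=-(\cA_\gamma f,g)_{m_\gamma}$ holds. This is exactly where the choice of extension $\cL_\gamma$ enters, and it is the only genuinely delicate step; once it is in place, the inclusion $\cA_\gamma\subseteq L$, and hence the lemma, follows.
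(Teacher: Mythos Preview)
The paper's own proof of this lemma consists solely of citations: it points to \cite{FL17} for $\gamma>0$ and to \cite[Appendix~F]{AGH05} for $\gamma\le0$, without reproducing any argument. So there is no in-paper proof against which to compare yours step by step.

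Your strategy---pass through the unitary $U:f\mapsto\psi_\gamma f$ to recognise $\cA_\gamma=U^{-1}(\cL_\gamma-\gamma^2/2)U$ as self-adjoint and non-positive, then show the operator inclusion $\cA_\gamma\subseteq L$ and conclude equality by maximality of self-adjoint operators---is sound and is essentially the ground-state-transform argument one finds in the cited works. You also correctly isolate the genuine content: because $\{0\}$ has positive capacity, agreement on $C_c^\infty(\bR^3\setminus\{0\})$ is insufficient, and the point-interaction boundary condition encoded in $\mathcal{D}(\cL_\gamma)$ is precisely what forces the surface contribution at $|x|=\varepsilon$ to vanish in the limit.

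Two remarks on the sketch. First, before writing $\mathcal{E}^3(f,g)$ for $f\in\mathcal{D}(\cA_\gamma)$ you need $f\in\mathcal{F}^3$; the cleanest way to avoid circularity is to compare the \emph{form} associated with $\cA_\gamma$ (which exists automatically since $-\cA_\gamma\ge0$) to $(\mathcal{E}^3,\mathcal{F}^3)$ rather than the operators directly, or to use the near-origin expansion $\psi_\gamma f=\tfrac{a}{4\pi|x|}+b+o(1)$ to check explicitly that $f=\tfrac{a}{2}+O(|x|)$ near $0$ and hence $\nabla f\in L^2(m_\gamma)$. Second, the surface-term computation you outline is correct in spirit but is the entire substance of the lemma; in \cite{FL17} and \cite{AGH05} it is carried out either via this expansion or, equivalently, by matching resolvents through the Krein formula for the point interaction. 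Your proposal identifies the right mechanism but leaves that verification as a promise rather than a calculation.
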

\begin{proof}
The case $\gamma>0$ has been shown in \cite{FL17}. For the case $\gamma\leq 0$, see \cite[Appendix F]{AGH05}.
\qed \end{proof}
\begin{remark}\label{RM1}
Since the semigroup associated with $\cL_\gamma$ admits a symmetric transition density with respect to the Lebesgue measure, i.e. there exists a suitable function $r_t^\gamma(x,y)$ such that $r_t^\gamma(x,y)=r_t^\gamma(y,x)$ and $R^\gamma_tf(x):=\int_{\bR^3}r^\gamma_t(x,y)f(y)dy$ forms this semigroup (see e.g. \cite{CKMV1}), this lemma tells us the semigroup associated with $\cA_\gamma$ admits a symmetric transition density with respect to $m_\gamma$:
\[
	p^\gamma_t(x,y):=\frac{\mathrm{e}^{-\frac{\gamma^2}{2} t}\cdot r^\gamma_t(x,y)}{\psi_\gamma(x)\psi_\gamma(y)}.
\]
In other words, 
\[
	P^\gamma_tf(x):=\int_{\bR^3}p^\gamma_t(x,y)f(y)m_\gamma(dy)=\int_{\bR^3}\frac{\mathrm{e}^{-\frac{\gamma^2}{2} t}\psi_\gamma(y) r^\gamma_t(x,y)}{\psi_\gamma(x)}f(y)dy,\quad f\in L^2(\bR^3,m_\gamma),
\]
is the semigroup associated with $\cA_\gamma$. 
\end{remark}

Then Theorem~\ref{THM1} leads to the third reflection of the same phase transition:
\begin{description}
\item[(3)] Under the transform \eqref{generator-3d-dBM}, $\cL_\gamma$ corresponds to the dBM $X^3$. The global property of $X^3$ depending on $\gamma$ manifests the same phase transition as the two mentioned above: When $\gamma\geq \gamma_{cr}=0$, $X^3$ is recurrent; otherwise it is transient. The difference between the diffusive state $\gamma>0$ and the critical state $\gamma=0$ has already illustrated by the ergodicity of $X^3$ after the proof of Theorem~\ref{THM1}. 
\end{description}

\begin{remark}
A similar discussion about the critical phenomenon of certain Markovian Schr\"odinger forms appeared in \cite{T14}, where $h$-transform and global properties of Dirichlet forms are employed as well. However in the current paper, the Schr\"odinger form induced by $\cL_\gamma$ (or informally by \eqref{EQ2DBG}) is not Markovian. In other words, $\cL_\gamma$ can not be the generator of a certain Markov process.
\end{remark}

\subsection{Characterization via $h$-transform}\label{SEC23}

This subsection is devoted to illustrating some connections between $X^3$ and three-dimensional Brownian motion. We use the notation $R_t:=R^{-\infty}_t$ to stand for the probability transition semigroup of three-dimensional Brownian motion $W=(W_t)_{t\geq 0}$ as well as its $L^2$-semigroup if no confusions caused. Note that $\psi_\gamma$ is finite out of $\{0\}$. Consider the following $h$-transform with $h:=\psi_\gamma$: 
\begin{equation}\label{EQ5HPG}
	{}_h R^{\gamma}_t(x,dy):=\left\lbrace
\begin{aligned}
& \mathrm{e}^{-\frac{\gamma^2}{2} t}\frac{\psi_\gamma(y)}{\psi_\gamma(x)}R_t(x,dy),\quad x\in \bR^3\setminus \{0\},\\
&0,\qquad \qquad \quad \quad\quad\quad\;\;\;\;\; x=0. 
\end{aligned}
\right. 
\end{equation}
It is not hard to verify that $\psi_\gamma$ is $\frac{\gamma^2}{2}$-excessive relative to $R_t$ in the sense that $\mathrm{e}^{-\gamma^2t/2}R_t\psi_\gamma\leq \psi_\gamma$ and ${}_hR^\gamma_t$ is a sub-Markovian semigroup on $\bR^3\setminus \{0\}$. Denote the induced Markov process of ${}_hR^\gamma_t$ on $\bR^3\setminus \{0\}$ by ${}_hW^\gamma=\{({}_hW^\gamma_t)_{t\geq 0},\left({}_h\mathbf{P}^\gamma_x\right)_{x\in \bR^3}, \zeta_h\}$, where ${}_h\mathbf{P}^\gamma_x$ is the law of ${}_hW^{\gamma}$ starting from $x$ and $\zeta_h$ is its life time.

\begin{remark}
When $\gamma\geq 0$, $\psi_\gamma$ is nothing but the $\gamma^2/2$-resolvent kernel of $W$. More precisely, let $r(t,x)$ be the Gaussian kernel, i.e. $r(t,x)=\frac{1}{(2\pi t)^{3/2}}\mathrm{e}^{-\frac{|x|^2}{2t}}$. Then 
\[
	\psi_\gamma(x)=\int_0^\infty \mathrm{e}^{-\frac{\gamma^2 t}{2}}r(t,x)dt. 
\]
Particularly $\psi_0$ coincides with the three-dimensional Newtonian potential kernel. 
\end{remark}

To phrase an alternative characterization of $X^3$, we prepare two notions. Let $E$ be a locally compact separable metric space and $\fm$ be a positive Radon measure on it. 
The first one is the so-called \emph{part process}; see \cite[\S4.4]{FOT}. Let $(\mathcal{E},\mathcal{F})$ be a Dirichlet form on $L^2(E,\fm)$ associated with a Markov process $X$  and $F\subset E$ be a closed set of positive capacity relative to $(\mathcal{E},\mathcal{F})$. Then the part process $X^G$ of $X$ on $G:=E\setminus F$  is obtained by killing $X$ once upon leaving $G$. In other words, 
\[
	X^G_t=\left\lbrace
	\begin{aligned}
	& X_t,\quad t<\sigma_F:=\{s>0: X_s\in F\}, \\
	& \partial, \quad\;\; t\geq \sigma_F,
	\end{aligned}
	\right. 
\]
where $\partial$ is the trap of $X^G$. Note that $X^G$ is associated with the \emph{part Dirichlet form} $(\mathcal{E}^G,\mathcal{F}^G)$ of $(\mathcal{E},\mathcal{F})$ on $G$: 
\begin{equation}\label{EQ5FGF}
\begin{aligned}
	&\mathcal{F}^G=\{f\in \mathcal{F}: \tilde{f}=0,\; \mathcal{E}\text{-q.e. on }F\}, \\
	&\mathcal{E}^G(f,g)=\mathcal{E}(f,g),\quad f,g\in \mathcal{F}^G,
\end{aligned}
\end{equation}
where $\tilde{f}$ stands for the $\cE$-quasi-continuous version of $f$. The second is the one-point reflection of a Markov process studied in \cite{CF15}; see also \cite[\S7.5]{CF}. Let $a\in E$ be a non-isolated point with $\fm(\{a\})=0$ and $X^0$ be an $\fm$-symmetric Borel standard process on $E_0:=E\setminus \{a\}$ with no killing inside. Then a right process $X$ on $E$ is called a \emph{one-point reflection} of $X^0$ (at $a$) if $X$ is $\fm$-symmetric and of no killing on $\{a\}$, and the part process of $X$ on $E_0$ is $X^0$.

\begin{theorem}\label{THM3}
Fix $\gamma\in \bR$, and let $X^3$ and $(\mathcal{E}^3,\mathcal{F}^3)$ be in Theorem~\ref{THM1}. Then $\{0\}$ is of positive $1$-capacity relative to $\mathcal{E}^3$. Furthermore, the following hold:
\begin{itemize}
\item[(1)] ${}_hW^\gamma$ is identified with the part process of $X^3$ on $\bR^3\setminus \{0\}$;
\item[(2)] $X^3$ is the unique (in law) one-point reflection of ${}_hW^\gamma$ at $0$. 
\end{itemize}
\end{theorem}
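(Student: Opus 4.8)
The plan is to prove the three assertions in turn: first that $\{0\}$ is non-polar relative to $\mathcal{E}^3$, then identification (1) of the part process as the $h$-transform ${}_hW^\gamma$, and finally the reflection statement (2). The key structural inputs are the generator formula from Lemma~\ref{LM1}, the identity $\frac12\Delta\psi_\gamma=\frac{\gamma^2}{2}\psi_\gamma$ on $\bR^3\setminus\{0\}$, and the rotational invariance of $m_\gamma$ and $\mathcal{E}^3$.

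\emph{Positive capacity.} To see $\mathrm{Cap}_1(\{0\})>0$ I would reduce to radial competitors. Since both $m_\gamma$ and $\mathcal{E}^3$ are $SO(3)$-invariant, the averaging $u\mapsto\bar u:=\int_{SO(3)}u(g\,\cdot)\,dg$ maps $\mathcal{F}^3$ into its radial subspace, preserves the property of being $1$ near $0$, and, by Jensen's inequality for the convex functional $u\mapsto\mathcal{E}^3_1(u,u)$, does not increase $\mathcal{E}^3_1$. Hence the capacity is the infimum over radial $u(x)=g(|x|)$, for which
\[
\mathcal{E}^3_1(u,u)=\frac{1}{2\pi}\int_0^\infty|g'(r)|^2\mathrm{e}^{-2\gamma r}\,dr+\frac{1}{\pi}\int_0^\infty|g(r)|^2\mathrm{e}^{-2\gamma r}\,dr.
\]
If $g(0)=1$, the elementary bound $|g(r)-1|\le\bigl(\int_0^r\mathrm{e}^{2\gamma s}ds\bigr)^{1/2}\bigl(\int_0^\infty|g'|^2\mathrm{e}^{-2\gamma s}ds\bigr)^{1/2}$ shows that driving the first integral to $0$ forces $g\to1$ on compacts, so the second integral stays bounded away from $0$. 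Thus $\mathcal{E}^3_1(u,u)$ cannot be made small and $\mathrm{Cap}_1(\{0\})>0$. This estimate is uniform in $\gamma\in\bR$, so it extends \cite[Proposition~3.1]{FL17} to all $\gamma$.

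\emph{Part (1).} Since $(\mathcal{E}^3,\mathcal{F}^3)$ is regular with special standard core $C_c^\infty(\bR^3)$ and $G:=\bR^3\setminus\{0\}$ is open, the part form $(\mathcal{E}^{3,G},\mathcal{F}^{3,G})$ of \eqref{EQ5FGF} is regular on $L^2(G,m_\gamma)$ with core $C_c^\infty(G)$ by \cite[Theorem~4.4.3]{FOT}, and the part process of $X^3$ on $G$ is its associated diffusion. The symmetry $r_t(x,y)=r_t(y,x)$ of the Gaussian kernel shows ${}_hR^\gamma_t$ has the $m_\gamma$-symmetric density $\mathrm{e}^{-\gamma^2t/2}r_t(x,y)/(\psi_\gamma(x)\psi_\gamma(y))$, so ${}_hW^\gamma$ is $m_\gamma$-symmetric. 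Using $\frac12\Delta\psi_\gamma=\frac{\gamma^2}{2}\psi_\gamma$ on $G$, differentiating the $h$-transform semigroup at $t=0$ gives, for $f\in C_c^\infty(G)$,
\[
\lim_{t\downarrow0}\frac{{}_hR^\gamma_tf-f}{t}=-\frac{\gamma^2}{2}f+\frac{1}{\psi_\gamma}\cdot\frac12\Delta(\psi_\gamma f)=\frac12\Delta f+\frac{\nabla\psi_\gamma}{\psi_\gamma}\cdot\nabla f=\cA_\gamma f,
\]
which by Lemma~\ref{LM1} is exactly the generator of the part process on the same functions; an integration by parts confirms $-(\cA_\gamma f,g)_{m_\gamma}=\mathcal{E}^3(f,g)$ for $f,g\in C_c^\infty(G)$. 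I would then argue that $C_c^\infty(G)$ is a core for the Dirichlet form of ${}_hW^\gamma$ as well, so that the two $m_\gamma$-symmetric forms agree on a common core and therefore coincide, identifying ${}_hW^\gamma$ with the part process of $X^3$ on $G$.

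\emph{Part (2) and the main obstacle.} The form $(\mathcal{E}^3,\mathcal{F}^3)$ carries no killing measure, so $X^3$ has no killing, in particular none at $0$; together with $m_\gamma(\{0\})=0$ and Part~(1), this makes $X^3$ a one-point reflection of ${}_hW^\gamma$ at $0$, giving existence. For uniqueness I would invoke the one-point reflection theory of Chen and Fukushima \cite{CF15} (see also \cite[\S7.5]{CF}): because $\{0\}$ has positive capacity and $m_\gamma(\{0\})=0$, reattaching the single point leaves no free weighting parameter and the reflection is determined in law — in contrast with the dBMVD of \S\ref{SEC2-2}, where $0$ joins two distinct pieces and the genuine parameter $\tp$ appears. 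I expect the chief difficulty to lie in Part~(1): passing rigorously from equality of generators on $C_c^\infty(G)$ to identity of the processes requires verifying that this space is a core for the Dirichlet form of ${}_hW^\gamma$ and that the naive $h$-transform semigroup is strongly continuous and symmetric on $L^2(G,m_\gamma)$; a secondary technical point is making the symmetrization step precise at the level of quasi-continuous versions and checking the hypotheses of the Chen–Fukushima uniqueness theorem.
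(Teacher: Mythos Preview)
Your overall structure matches the paper. For the capacity statement you take a different route: the paper simply cites \cite[Proposition~3.1]{FL17} for $\gamma>0$ and, for $\gamma\le0$, compares capacities via the monotonicity $\psi_\gamma\ge\psi_1$, which gives $\mathcal{F}^{3,\gamma}\subset\mathcal{F}^{3,1}$ and $\mathcal{E}^{3,1}_1\le\mathcal{E}^{3,\gamma}_1$, hence $\text{Cap}^\gamma(\{0\})\ge\text{Cap}^1(\{0\})>0$. Your direct radial computation works too and has the virtue of being self-contained.

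The gap you correctly flag in Part~(1) --- why $C_c^\infty(G)$ is a core for the Dirichlet form of ${}_hW^\gamma$ --- is exactly where the paper supplies the missing idea. Rather than matching generators on a test class and then worrying about domains, the paper computes the Dirichlet form of ${}_hR^\gamma_t$ directly from $\lim_{t\downarrow0}t^{-1}(f-{}_hR^\gamma_tf,f)_{m_\gamma}$ and obtains the closed-form identity
\[
{}_h\mathcal{E}(f,f)=\tfrac12\mathbf{D}_{\gamma^2}(f\psi_\gamma,f\psi_\gamma),\qquad {}_h\mathcal{F}=\{f:f\psi_\gamma\in H^1(\bR^3)\},
\]
where $\mathbf{D}$ is the ordinary Dirichlet integral. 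Since $\{0\}$ is polar for three-dimensional Brownian motion, $C_c^\infty(\bR^3\setminus\{0\})$ is already a core of $(\tfrac12\mathbf{D},H^1(\bR^3))$; multiplication by the smooth positive function $\psi_\gamma$ is a bijection of $C_c^\infty(G)$ onto itself, so $C_c^\infty(G)$ is automatically a core of $({}_h\mathcal{E},{}_h\mathcal{F})$. The same computation on the part-form side, using $\cL_\gamma=\frac12\Delta$ on $C_c^\infty(G)$ and Lemma~\ref{LM1}, gives $\mathcal{E}^{3,G}(f,f)=\tfrac12\mathbf{D}_{\gamma^2}(f\psi_\gamma,f\psi_\gamma)$ as well, so the two regular Dirichlet forms agree on a common core and hence coincide. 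This sidesteps the strong-continuity and generator-domain issues you anticipated.

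For Part~(2), the hypothesis of \cite[Theorem~7.5.4]{CF} that the paper actually verifies is ${}_h\mathbf{P}^\gamma_x(\zeta_h<\infty,\ {}_hW^\gamma_{\zeta_h-}=0)>0$ for every $x\neq0$: this follows because $X^3$ is conservative (so $\zeta_h=\sigma_0$ on $\{\zeta_h<\infty\}$), $X^3$ is quasi-left-continuous with no killing inside (so the left limit at $\zeta_h$ is $0$), and $\{0\}$ has positive capacity (so $\mathbf{P}^3_x(\sigma_0<\infty)>0$). Your heuristic that there is ``no free weighting parameter'' is the right intuition, but the theorem requires this approachability condition, not merely $m_\gamma(\{0\})=0$.
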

\begin{proof}
To show the $1$-capacity of $\{0\}$ is positive, the case $\gamma>0$ has been considered in \cite[Proposition~3.1]{FL17}. Denote the $1$-capacity relative to $\mathcal{E}^{3,1}$ by $\text{Cap}^1$, and then $\text{Cap}^1(\{0\})>0$. For the case $\gamma\leq 0$, denote the $1$-capacity relative to $\mathcal{E}^{3,\gamma}$ by $\text{Cap}^\gamma$. It suffices to note that $\text{Cap}^\gamma(A)\geq \text{Cap}^1(A)$ for any Borel set $A\subset \bR^3$ due to $\mathcal{F}^{3,\gamma}\subset\mathcal{F}^{3,1}$ and $\mathcal{E}^{3,1}(f,f)\leq \mathcal{E}^{3,\gamma}(f,f)$ for all $f\in \mathcal{F}^{3,\gamma}$. Particularly, $\text{Cap}^\gamma(\{0\})\geq \text{Cap}^1(\{0\})>0$. 

Denote the Dirichlet form of three-dimensional Brownian motion by $(\frac{1}{2}\mathbf{D}, H^1(\bR^3))$, i.e. $H^1(\bR^3)$ is the $1$-Sobolev space and $\mathbf{D}$ is the Dirichlet integral. To prove the first assertion, it is straightforward to verify that $({}_hR^{\gamma}_t)$ is symmetric with respect to $m_\gamma(dx)=\psi_\gamma(x)^2dx$ and then associated with the Dirichlet form (see \cite[(1.3.17)]{FOT})
\[
\begin{aligned}
{}_h\mathcal{F}&=\{f\in L^2(\bR^3,m_\gamma): {}_h\mathcal{E}(f,f)<\infty\}, \\
{}_h\mathcal{E}(f,g)&=\lim_{t\downarrow 0}\frac{1}{t}\int_{\bR^3} \left(f(x)-{}_hR^{\gamma}_tf(x)\right)g(x)m_\gamma(dx),\quad f,g\in {}_h\mathcal{F}. 
\end{aligned}
\]
One can easily deduce that for any $f\in L^2(\bR^3,m_\gamma)$, 
\[
\begin{aligned}
	{}_h\mathcal{E}(f,f)&=\lim_{t\downarrow 0}\frac{1}{t}\int_{\bR^3} \left(f(x)\psi_\gamma(x)-\mathrm{e}^{-\frac{\gamma^2}{2} t}R_t(f\psi_\gamma)(x)\right)(f\psi_\gamma)(x)dx \\
	&=\frac{1}{2}\int_{\bR^3} |\nabla (f\psi_\gamma)|^2(x)dx+\frac{\gamma^2}{2}\int_{\bR^3} |(f\psi_\gamma)|^2(x)dx \\
	&=\frac{1}{2}\mathbf{D}_{\gamma^2}(f\psi_\gamma,f\psi_\gamma),
\end{aligned}\]
whenever the limit exists. 
This leads to
\begin{equation}\label{EQ5FFF}
	{}_h\mathcal{F}=\{f: f\psi_\gamma \in H^1(\bR^3)\},\quad {}_h\mathcal{E}(f,f)=\frac{1}{2}\mathbf{D}_{\gamma^2}(f\psi_\gamma, f\psi_\gamma),\quad f\in {}_h\mathcal{F}. 
\end{equation}
Since $C_c^\infty(\bR^3\setminus \{0\})$ is a core of $(\frac{1}{2}\mathbf{D},H^1(\bR^3))$ and $\psi_\gamma \in C^\infty(\bR^3\setminus \{0\})$ is positive, we can conclude that $C_c^\infty(\bR^3\setminus \{0\})$ is also a core of $({}_h\mathcal{E},{}_h\mathcal{F})$. 
On the other hand, the part process $X^{3,\bR^3\setminus \{0\}}$ of $X^{3}$ on $\bR^3\setminus \{0\}$ is associated with the Dirichlet form $(\mathcal{E}^{3,\bR^3\setminus \{0\}},\mathcal{F}^{3,\bR^3\setminus \{0\}})$ given by \eqref{EQ5FGF} with $(\mathcal{E},\mathcal{F})=(\mathcal{E}^{3},\mathcal{F}^{3})$ and $G=\bR^3\setminus \{0\}$. Particularly, $C_c^\infty(\bR^3\setminus \{0\})$ is also a core of $(\mathcal{E}^{3,\bR^3\setminus \{0\}},\mathcal{F}^{3,\bR^3\setminus \{0\}})$ by \cite[Theorem~4.4.3]{FOT}. It follows from Lemma~\ref{LM1} and $\cL_\gamma=\frac{1}{2}\Delta$ on $C_c^\infty(\bR^3\setminus \{0\})$ that for any $f\in C_c^\infty(\bR^3\setminus\{0\})\subset \cD(\cA_\gamma)$,
\[
\begin{aligned}
	\mathcal{E}^{3,\bR^3\setminus \{0\}}(f,f)&=\mathcal{E}^3(f,f)= \left(-\cA_\gamma f, f \right)_{m_\gamma}  \\
	&= -\int_{\bR^3} \cL_\gamma(\psi_\gamma f)(x)\left(\psi_\gamma f\right)(x)dx +\frac{\gamma^2}{2}\int_{\bR^3} |(f\psi_\gamma)|^2(x)dx \\
	&=\frac{1}{2}\mathbf{D}_{\gamma^2}(f\psi_\gamma, f\psi_\gamma). 
\end{aligned}\]
In view of \eqref{EQ5FFF}, one can obtain that 
\[
	\mathcal{E}^{3,\bR^3\setminus \{0\}}(f,f)={}_h\mathcal{E}(f,f),\quad \forall f\in C_c^\infty(\bR^3\setminus \{0\}). 
\]
As a result, $(\mathcal{E}^{3,\bR^3\setminus \{0\}},\mathcal{F}^{3,\bR^3\setminus \{0\}})=({}_h\mathcal{E},{}_h\mathcal{F})$. Therefore, ${}_hW^\gamma$ is identified with the part process of $X^3$ on $\bR^3\setminus \{0\}$. 

Finally we prove the second assertion. Clearly, $X^{(\gamma)}$ is a one-point reflection of ${}_hW^{\gamma}$ by the first assertion. To show the uniqueness, we shall apply \cite[Theorem~7.5.4]{CF}. It suffices to note that for every $x\neq 0$,
\begin{equation}\label{EQ5HPG2}
	{}_h\mathbf{P}^\gamma_x(\zeta_h<\infty, {}_hW^{\gamma}_{\zeta_h-}=0)={}_h\mathbf{P}^\gamma_x(\zeta_h<\infty)=\mathbf{P}^3_x(\sigma_0<\infty)>0,
\end{equation}
where $\sigma_0:=\inf\{t>0: X^{3}_t=0\}$. The second equality is due to the conservativeness of $X^3$ (see Corollary~\ref{COR1}), and the first one is the consequence of that ${}_hW^\gamma$ has no killing inside (on $\bR^3\setminus \{0\}$) and the quasi-left continuity of $X^3$. The last equality holds because $\{0\}$ is of positive capacity (or by virtue of Lemma~\ref{LM2}~(2)).  This completes the proof.
\qed \end{proof}
\begin{remark}
The second assertion in Theorem~\ref{THM3} leads to that $0$ is regular for itself with respect to $X^3$, i.e. $\bfP^3_0(\sigma_0=0)=1$. 
\end{remark}

\subsection{Characterization via skew product decomposition}\label{SEC24}

Due to the fact that $\psi_\gamma$ is a radial function, the part process ${}_hW^\gamma$ of $X^3$ on $\bR^3\setminus\{0\}$ is rotationally invariant in the following sense: Let $T$ be an arbitrary orthogonal transformation from $\bR^3$ to $\bR^3$, then 
\[
	{}_h\hat{W}^\gamma:=\left\{{}_h\hat{W}^\gamma_t:=T({}_hW^\gamma_t), {}_h\hat{\bfP}^\gamma_x:={}_h\bfP^\gamma_{T^{-1}x} \right\}
\]
defines an equivalent Markov process to ${}_hW^\gamma$. Hence we can characterize ${}_hW^\gamma$ by obtaining its skew product decomposition. Unsurprisingly, $X^3$ is also rotationally invariant (see \cite[pp.11]{FL17}) and it is not hard to figure out its radial process. The following lemma is an extension of \cite[Proposition~3.7]{FL17}, and the proof can be completed by the same argument (so we omit it). 

\begin{lemma}\label{LM2}
\begin{itemize}
\item[(1)] The process ${}_hW^\gamma$ admits a skew-product representation
\begin{equation}\label{EQ2HWG}
{}_hW^\gamma_t=\varrho^0_t \vartheta_{A^0_t},\quad t\geq 0,
\end{equation}
where $\varrho^0:=(\varrho^0_t)_{t\geq 0}=(|{}_hW^\gamma_t|)_{t\geq 0}$ is a symmetric diffusion on $(0,\infty)$, killed at $\{0\}$, whose speed measure $\ell_\gamma^0$ and scale function $\ts_\gamma^0$ are
\[
	\begin{aligned}
		&\ell_\gamma^0(dr)=\frac{\mathrm{e}^{-2\gamma r}}{\pi}dr,  \quad
		\ts_\gamma^0(r)=\left\lbrace \begin{aligned}
			&\frac{\pi}{2\gamma}\mathrm{e}^{2\gamma r},  \quad \text{when }\gamma \neq 0,\\
			&\pi r,\quad \text{when }\gamma=0,
		\end{aligned}\right. 
	\end{aligned}
	\quad r\in (0,\infty);
\]	
$A^0:=(A^0_t)_{t\geq 0}$ is the PCAF of $\varrho^0$ with the Revuz measure
\[
	\mu_{A^0}(dr):=\frac{\ell_\gamma^0(dr)}{r^2}
\]
and $\vartheta$ is a spherical Brownian motion on $S^2:=\{x\in \bR^3:|x|=1\}$, which is independent of $\varrho^0$.
\item[(2)] The radial process $\varrho=(\varrho_t)_{t\geq 0}:=(|X^3_t|)_{t\geq 0}$ is a symmetric diffusion on $[0,\infty)$, reflecting at $\{0\}$, whose speed measure $\ell_\gamma$ and scale function $\ts_\gamma$ are
\begin{equation}\label{EQ2LRE}
	\begin{aligned}
		&\ell_\gamma(dr)=\frac{\mathrm{e}^{-2\gamma r}}{\pi}dr,  \quad
		\ts_\gamma(r)=\left\lbrace \begin{aligned}
			&\frac{\pi}{2\gamma}\mathrm{e}^{2\gamma r},  \quad \text{when }\gamma \neq 0,\\
			&\pi r,\quad \text{when }\gamma=0,
		\end{aligned}\right. 
	\end{aligned}
	\quad r\in [0,\infty).
\end{equation}	
\end{itemize}
\end{lemma}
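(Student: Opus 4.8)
The plan is to exploit the rotational invariance of $m_\gamma$ to reduce the whole statement to a one-dimensional computation, following the skew-product scheme of \cite{FO89} that was already carried out for $\gamma>0$ in \cite[Proposition~3.7]{FL17}; the essential point is that the decomposition and the resulting constants are uniform in $\gamma\in\bR$. First I would pass to polar coordinates $x=r\omega$ with $r=|x|>0$, $\omega\in S^2$, under which $dx=r^2\,dr\,\sigma(d\omega)$ for the surface measure $\sigma$ on $S^2$. Since $\psi_\gamma(x)=\re^{-\gamma r}/(2\pi r)$ is radial, the symmetric measure becomes $m_\gamma(dx)=\frac{\re^{-2\gamma r}}{4\pi^2}\,dr\,\sigma(d\omega)$, and writing $|\nabla f|^2=(\partial_r f)^2+r^{-2}|\nabla_{S^2}f|^2$ splits the energy of the part form on $\bR^3\setminus\{\b0\}$ (computed in \eqref{EQ5FFF}) into a purely radial piece and a purely angular piece coupled only through the factor $r^{-2}$. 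Reading off the radial piece $\tfrac12\int_0^\infty (f')^2\frac{\re^{-2\gamma r}}{\pi}\,dr$ on $L^2((0,\infty),\frac{\re^{-2\gamma r}}{\pi}dr)$ identifies $\varrho^0$ as the one-dimensional diffusion whose symmetrizing (speed) measure is $\ell_\gamma^0(dr)=\frac{\re^{-2\gamma r}}{\pi}dr$ and whose scale obeys $(\ts_\gamma^0)'(r)=\pi\re^{2\gamma r}$, i.e. the stated $\ts_\gamma^0$. Equivalently, the radial part of the generator $\cA_\gamma$ (displayed before Lemma~\ref{LM1}) simplifies to $\tfrac12\partial_r^2-\gamma\partial_r$, the $1/r$ contributions from $\tfrac12\Delta$ and from $\nabla\psi_\gamma/\psi_\gamma$ cancelling exactly, and the same $\ell_\gamma^0,\ts_\gamma^0$ are read off by the standard one-dimensional correspondence. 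Because ${}_hW^\gamma$ is killed at $\{0\}$ (Theorem~\ref{THM3}), $\varrho^0$ is killed at $0$.

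Next I would identify the angular motion. The angular energy $\frac{1}{2r^2}|\nabla_{S^2}f|^2$ is that of a spherical Brownian motion $\vartheta$ on $S^2$ run through a clock slowed by the factor $r^{-2}$; this is exactly the content of the skew-product theorem of \cite{FO89}, which yields ${}_hW^\gamma_t=\varrho^0_t\,\vartheta_{A^0_t}$ with $\vartheta$ independent of $\varrho^0$ (the independence being forced by the fact that the decoupled form is a sum of a radial and an angular part), and with $A^0_t=\int_0^t(\varrho^0_s)^{-2}\,ds$. Viewing $A^0$ as a PCAF of $\varrho^0$, its Revuz measure relative to $\ell_\gamma^0$ is $r^{-2}\ell_\gamma^0(dr)=\mu_{A^0}(dr)$, as claimed. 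This proves (1).

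For (2), I would invoke that $X^3$ is the one-point reflection of ${}_hW^\gamma$ at $0$ (Theorem~\ref{THM3}(2)) together with the positivity of the capacity of $\{0\}$, so that $\varrho=|X^3|$ is obtained from $\varrho^0$ by reflecting rather than killing at $r=0$. The scale and speed on $[0,\infty)$ are the same $\ts_\gamma,\ell_\gamma$ extended to the boundary, and one checks by Feller's boundary test — uniformly in $\gamma$ — that $0$ is a regular boundary, hence reflecting after the one-point reflection, since $\ts_\gamma(0+)=\frac{\pi}{2\gamma}$ (or $0$ when $\gamma=0$) is finite and $\ell_\gamma$ is finite near $0$, while the behaviour at $+\infty$ reproduces the recurrence/transience dichotomy of Theorem~\ref{THM1}.

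The main obstacle I anticipate is not the decomposition, which is a soft application of \cite{FO89} with constants falling out of the clean generator computation above, but rather the boundary analysis at $r=0$: confirming that killing $\varrho^0$ there matches the killing of ${}_hW^\gamma$ on $\{0\}$, that the clock $A^0$ terminates consistently, and that reflecting instead produces precisely the radial process of the one-point reflection $X^3$ — and in verifying that none of these steps degrades when $\gamma\le 0$, where $m_\gamma$ is infinite and (for $\gamma<0$) the process is transient.
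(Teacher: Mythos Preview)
Your proposal is correct and follows exactly the route the paper intends: the paper omits the proof, stating that it is an extension of \cite[Proposition~3.7]{FL17} and ``can be completed by the same argument,'' which is precisely the \cite{FO89} skew-product computation in polar coordinates that you outline, together with the observation that the constants and boundary classification are insensitive to the sign of $\gamma$. Your generator calculation showing the cancellation of the $1/r$ terms and the identification of the speed/scale pair are accurate, and the passage from $\varrho^0$ to $\varrho$ via the one-point reflection of Theorem~\ref{THM3} is the right mechanism for part~(2).
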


\begin{remark}
When $\gamma=0$, $\varrho^0$ is nothing but the absorbing Brownian motion on $(0,\infty)$ (killed at $0$), and $\varrho$ is the reflecting Brownian motion on $[0,\infty)$. It is not expected that $X^3$ admits an analogical representation of \eqref{EQ2HWG}, because $\ell_\gamma(dr)/r^2$ is not Radon on $[0,\infty)$ and hence not smooth (by e.g. \cite[Theorem~A.3.(4)]{L18}) relative to $\varrho$; see further explanation below \cite[Corollary~3.11]{FL17}. 
\end{remark}

It is worth noting two facts about the radial processes $\varrho^0$ and $\varrho$. The first one is to derive the global properties of $\varrho$, which lead to those of $X^3$, by employing the scale function and the speed measure.

\begin{corollary}\label{COR1}
Let $\varrho=(\varrho_t)_{t\geq 0}$ be the radial process of $X^3$. Then
\begin{itemize}
\item[(1)] $\varrho$ is irreducible and conservative. Particularly, $X^3$ is conservative. 
\item[(2)] $\varrho$ is recurrent, if and only if $\gamma\geq 0$. Otherwise it is transient. 
\end{itemize}
\end{corollary}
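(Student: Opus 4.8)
The plan is to deduce everything about $\varrho$ from its explicit scale--speed characterization in Lemma~\ref{LM2}~(2), since $\varrho$ is a regular one-dimensional diffusion on $[0,\infty)$ reflecting at $0$ and completely determined by the scale function $\ts_\gamma$ and speed measure $\ell_\gamma$ recorded in \eqref{EQ2LRE}. All three properties (irreducibility, conservativeness, recurrence/transience) are encoded in the behaviour of $\ts_\gamma$ and $\ell_\gamma$ at the two endpoints; the endpoint $0$ is a regular boundary made reflecting and plays no role, so the whole analysis concentrates on the right endpoint $\infty$ through Feller's boundary classification.

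First I would settle the recurrence/transience dichotomy, which for a diffusion on $[0,\infty)$ reflecting at $0$ is governed solely by whether $\ts_\gamma(\infty)=\infty$. Reading off \eqref{EQ2LRE}: when $\gamma\geq 0$ one has $\ts_\gamma(r)=\pi r$ (for $\gamma=0$) or $\ts_\gamma(r)=\frac{\pi}{2\gamma}\re^{2\gamma r}$ (for $\gamma>0$), and in either case $\ts_\gamma(\infty)=\infty$, so $\varrho$ is recurrent; when $\gamma<0$ the constant $\pi/(2\gamma)$ is negative while $\re^{2\gamma r}\downarrow 0$, hence $\ts_\gamma(\infty)=0<\infty$ and $\varrho$ is transient. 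This is assertion~(2). Irreducibility holds in all cases because $\ell_\gamma$ has full support on $[0,\infty)$ and $\ts_\gamma$ is continuous and strictly increasing, so every nonempty open set is reached from every starting point with positive probability; equivalently, the associated Dirichlet form admits no nontrivial invariant set.

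For conservativeness I would argue that $\infty$ is an inaccessible (natural) boundary in every case, i.e. $\varrho$ cannot reach $\infty$ in finite time. When $\gamma\geq 0$ this is automatic: $\varrho$ is recurrent, and a recurrent diffusion (equivalently, a recurrent Dirichlet form) is conservative. The only case demanding a direct computation is $\gamma<0$, and this is precisely where the main subtlety lies --- one must carefully distinguish \emph{transience} from \emph{explosion}. Here $\ts_\gamma(\infty)$ is finite, so the process does drift out toward $\infty$, yet Feller's test shows $\infty$ is still not attained in finite time:
\[
	\int^\infty \bigl(\ts_\gamma(\infty)-\ts_\gamma(r)\bigr)\,\ell_\gamma(dr)=\int^\infty \Bigl(-\frac{\pi}{2\gamma}\re^{2\gamma r}\Bigr)\frac{\re^{-2\gamma r}}{\pi}\,dr=\int^\infty \frac{1}{2|\gamma|}\,dr=\infty,
\]
because the growth $\re^{-2\gamma r}$ of the speed measure exactly cancels the decay of $\ts_\gamma(\infty)-\ts_\gamma(r)$, leaving a divergent integral. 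Thus $\infty$ is a natural boundary and $\varrho$ is conservative for all $\gamma\in\bR$. Finally, since $\varrho_t=|X^3_t|$ by Lemma~\ref{LM2}~(2), conservativeness of $\varrho$ forces $X^3$ to have infinite lifetime as well, giving the ``particularly'' clause of assertion~(1) and completing the proof.
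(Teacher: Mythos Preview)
Your proof is correct and follows essentially the same route as the paper: both deduce everything from the scale/speed data of Lemma~\ref{LM2}~(2). The only cosmetic differences are that the paper handles conservativeness uniformly via the single criterion $\int_1^\infty \ell_\gamma((1,r))\,d\ts_\gamma(r)=\infty$ from \cite[Example~3.5.7]{CF} (valid for every $\gamma$), whereas you split off $\gamma\geq 0$ by ``recurrent $\Rightarrow$ conservative'' and treat $\gamma<0$ with the Feller integral $\int^\infty(\ts_\gamma(\infty)-\ts_\gamma(r))\,\ell_\gamma(dr)$; these two integrals are equal by Fubini, so the content is identical.
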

\begin{proof}
The irreducibility of $\varrho$ is clear. Note that $\varrho$ is conservative, if and only if (see e.g. \cite[Example~3.5.7]{CF})
\begin{equation}\label{EQ2LRT}
	\int_1^\infty \ell_\gamma((1,r))d\ts_\gamma(r)=\infty. 
\end{equation}
This is true by a straightforward computation. 

By \cite[Theorem~2.2.11]{CF}, $\varrho$ is transient, if and only if $\ts_\gamma(\infty):=\lim_{r\uparrow \infty} \ts_\gamma(r)<\infty$ and $\ell_\gamma((1,\infty))=\infty$ (Otherwise it is recurrent). Clearly it amounts to $\gamma<0$. This completes the proof. 
\qed \end{proof}
\begin{remark}
The recurrence/transience of $\varrho$ coincides with that of $X^3$, as stated in Theorem~\ref{THM1}. 
\end{remark}

The second is concerned with their pathwise decompositions as below. The proof is analogical to that of \cite[(3.6)]{FL17} and we omit it.

\begin{corollary}\label{COR2}
The radial processes $\varrho^0$ and $\varrho$ admit the following pathwise decompositions:
\[
\begin{aligned}
	& \varrho^0_t-\varrho^0_0=B_t-\gamma t,\quad 0\leq t<\zeta^0(=\sigma_0), \\
	& \varrho_t-\varrho_0=B_t-\gamma t+ \pi \gamma \cdot l^0_t,\quad t\geq 0, 
\end{aligned}
\]
where $\zeta^0$ is the lift time of $\varrho^0$, $(B_t)_{t\geq 0}$ is a certain one-dimensional standard Brownian motion, and $l^0:=(l^0_t)_{t\geq 0}$ is the local time of $\varrho$ at $0$, i.e. the PCAF associated with the smooth measure $\delta_0$ relative to $\varrho$. 
\end{corollary}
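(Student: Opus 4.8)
The plan is to obtain both decompositions from Fukushima's decomposition of the coordinate function $u(r):=r$ relative to the one-dimensional Dirichlet forms determined by the scale functions and speed measures of Lemma~\ref{LM2}. Write $(\cE^\varrho,\cF^\varrho)$ for the regular Dirichlet form of $\varrho$ on $L^2([0,\infty),\ell_\gamma)$; since $\varrho$ is the $\ell_\gamma$-symmetric diffusion with scale $\ts_\gamma$, one has
\[
	\cE^\varrho(f,g)=\frac12\int_0^\infty f'(r)g'(r)\,\ell_\gamma(dr),
\]
and a direct computation from $\ts_\gamma'(r)=\pi\mathrm{e}^{2\gamma r}$ and $\ell_\gamma(dr)=\pi^{-1}\mathrm{e}^{-2\gamma r}dr$ shows that the associated generator is $\tfrac12\tfrac{d}{d\ell_\gamma}\tfrac{d}{d\ts_\gamma}=\tfrac12\tfrac{d^2}{dr^2}-\gamma\tfrac{d}{dr}$, which already exhibits the drift $-\gamma$. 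The form of $\varrho^0$ is the part form of $(\cE^\varrho,\cF^\varrho)$ on $(0,\infty)$, i.e. the same expression under the Dirichlet condition $\tilde f(0)=0$, as in \eqref{EQ5FGF}. A preliminary remark: $u\in\cF^\varrho$ when $\gamma>0$, whereas for $\gamma\le0$ one only has $u\in\cF^\varrho_\loc$, so in general the decomposition must be produced through localization (replacing $u$ by $u\cdot\eta_n$ for cutoffs $\eta_n$, or stopping $\varrho$ before it leaves $[0,n]$) and then letting $n\uparrow\infty$.

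By Fukushima's decomposition (\cite[Theorem~5.2.2]{FOT}) we have $u(\varrho_t)-u(\varrho_0)=M^{[u]}_t+N^{[u]}_t$, with $M^{[u]}$ a martingale additive functional of finite energy and $N^{[u]}$ a continuous additive functional of zero energy. The first step is to identify $M^{[u]}$ with a Brownian motion. For $(\cE^\varrho,\cF^\varrho)$ the energy measure of $u$ is $\mu_{\langle u\rangle}(dr)=u'(r)^2\,\ell_\gamma(dr)=\ell_\gamma(dr)$; by the Revuz correspondence the quadratic variation $\langle M^{[u]}\rangle$ is then the positive continuous additive functional with Revuz measure $\ell_\gamma$, which is exactly the clock $t\mapsto t$ because $\ell_\gamma$ is the symmetric measure of $\varrho$. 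Hence $\langle M^{[u]}\rangle_t=t$ and, by L\'evy's characterization, $B:=M^{[u]}$ is a standard one-dimensional Brownian motion. This yields the $B_t$ term in both formulas.

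The second step is to compute $N^{[u]}$, which here is of bounded variation because $u$ lies (locally) in the domain of the generator, and to read off its signed Revuz measure $\mu_N$ from $\cE^\varrho(u,v)=-\int\tilde v\,d\mu_N$, valid for $v\in\cF^\varrho\cap C_c([0,\infty))$. Since $u'\equiv1$, we have $\cE^\varrho(u,v)=\tfrac12\int_0^\infty v'(r)\,\ell_\gamma(dr)$, and integrating by parts (using $\ell_\gamma(dr)=\pi^{-1}\mathrm{e}^{-2\gamma r}dr$) splits this into an interior term proportional to $\int_0^\infty v\,d\ell_\gamma$ and a boundary term proportional to $v(0)$. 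Matching Revuz measures, the interior term identifies the absolutely continuous part of $\mu_N$ as a multiple of $\ell_\gamma$, whose associated additive functional is the drift $-\gamma t$ (the functional with Revuz measure $\ell_\gamma$ being the clock $t\mapsto t$); the boundary term identifies a singular part supported at $\{0\}$, i.e. a multiple of $\delta_0$, whose associated additive functional is the corresponding multiple of the local time $l^0$. Assembling the two pieces gives $\varrho_t-\varrho_0=B_t-\gamma t+(\text{const})\cdot l^0_t$, the asserted decomposition for $\varrho$. For $\varrho^0$ the same computation is carried out on $(0,\infty)$ under the Dirichlet boundary condition: test functions now vanish at $0$ (equivalently, $\varrho^0$ is killed upon reaching $0$ and never accumulates local time there before $\zeta^0=\sigma_0$), so the boundary atom disappears and one is left with $\varrho^0_t-\varrho^0_0=B_t-\gamma t$ for $0\le t<\zeta^0$.

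The step I expect to be the main obstacle is the boundary analysis at $0$ for $\varrho$: one must justify rigorously that the singular part of $\mu_N$ is exactly an atom at $\{0\}$ (with no further singular continuous contribution) and then match this atom, via the Revuz correspondence, with the local time $l^0$ as defined in the statement rather than with some other boundary functional, thereby pinning down the constant in front of $l^0_t$. A secondary technical nuisance is that, because $u\notin\cF^\varrho$ when $\gamma\le0$, the whole decomposition has to be set up in $\cF^\varrho_\loc$ through a sequence of stopping times, after which one checks that the martingale and zero-energy parts glue consistently as $n\uparrow\infty$ and that the cutoffs create no spurious boundary term. Once these points are settled, the contrast between the reflecting (Neumann) boundary of $\varrho$ and the absorbing (Dirichlet) boundary of $\varrho^0$ is precisely what accounts for the presence, respectively the absence, of the $l^0$ term.
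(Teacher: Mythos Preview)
Your approach is correct and is essentially the argument the paper has in mind: the paper omits the proof, pointing to the analogous computation in \cite{FL17}, and carries out exactly this Fukushima-decomposition-plus-integration-by-parts scheme in the proof of Theorem~\ref{THM6} for the signed radial process.

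Two small remarks. First, the ``obstacle'' you flag about the singular part of $\mu_N$ is not really an obstacle: the integration by parts
\[
\cE^\varrho(u,v)=\frac{1}{2}\int_0^\infty v'(r)\,\ell_\gamma(dr)=-\frac{1}{2\pi}v(0)+\gamma\int_0^\infty v(r)\,\ell_\gamma(dr)
\]
already delivers $\mu_N$ explicitly as an atom at $0$ plus an absolutely continuous piece, so there is no room for a further singular continuous contribution and the constant in front of $l^0$ is read off directly. Second, the localization for $\gamma\le 0$ is routine here because the stopped versions of $u$ agree with $u$ on $[0,n]$ and the local Fukushima decomposition (\cite[Theorem~5.5.1]{FOT}) handles this without extra boundary terms; compare how the paper proceeds with $f(r)=r\in\sF^Y_\loc$ in Theorem~\ref{THM6}.
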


\subsection{Pathwise representation}\label{SEC25}

This short subsection is devoted to the pathwise representation of $X^3$ by virtue of so-called Fukushima's decomposition. Let $f_i$ be the $i$th coordinate function for $i=1,2,3$, i.e. $f_i(x):=x_i$ for $x=(x_1,x_2,x_3)\in \bR^3$. Obviously $f_i\in \mathcal{F}^3_{\mathrm{loc}}$. Then we can write the Fukushima's decomposition of $X^3$ relative to $f_i$: 
\begin{equation}\label{EQ2FIX}
	f_i(X^3_t)-f_i(X^3_0)=M^{f_i}_t+N^{f_i}_t,\quad t\geq 0,\quad \mathbf{P}^3_x\text{-a.s.},\quad \text{q.e. }x,
\end{equation}
where $M^{f_i}:=(M^{f_i}_t)_{t\geq 0}$ is an MAF locally of finite energy and $N^{f_i}:=(N^{f_i}_t)_{t\geq 0}$ is a CAF locally of zero energy. Note that $M^{f_i}$ and $N^{f_i}$ in this decomposition are unique in law. Set $M_t:=(M^{f_1}_t, M^{f_2}_t, M^{f_3}_t)$ and  $N_t:=(N^{f_1}_t, N^{f_2}_t, N^{f_3}_t)$. Recall that  
an additive functional $A=(A_t)_{t\geq 0}$ is called of bounded variation if $A_t(\omega)$ is of bounded variation in $t$ on each compact subinterval of $[0,\zeta(\omega))$ for every fixed $\omega$ in the defining set of $A$, where $\zeta$ ($=\infty$ for $X^3$ due to its conservativeness) is the life time of the underlying Markov process. We say $N:=(N_t)_{t\geq 0}$ is of bounded variation if $N^{f_i}$ is of bounded variation for $i=1,2,3$.

By repeating the arguments in \cite[\S4]{FL17}, we can conclude the following characterizations of $M$ and $N$.

\begin{theorem}
Let $X^3$ be in Theorem~\ref{THM1} and $M=(M_t)_{t\geq 0}, N=(N_t)_{t\geq 0}$ be in the Fukushima's decomposition \eqref{EQ2FIX}. Then the following hold:
\begin{itemize}
\item[(1)] For q.e. $x\in \bR^3$, $M$ is equivalent to a three-dimensional Brownian motion under $\bfP^3_x$. 
\item[(2)] For $t<\sigma_0$, 
\[
	N_t=-\int_0^t \frac{\gamma |X^3_s|+1}{|X^3_s|^2}\cdot X^3_s ds.
\]
However, $N$ is not of bounded variation. 
\end{itemize}
\end{theorem}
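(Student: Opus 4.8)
The plan is to read off both parts from the Fukushima decomposition \eqref{EQ2FIX} together with the explicit form of the generator $\cA_\gamma$ recorded just before Lemma~\ref{LM1}. At the outset I would check that each $f_i\in \cF^3_{\mathrm{loc}}$, which holds even near the origin since $\int_{|x|<1}|x|^{-2}\,dx<\infty$ makes $\int_{|x|<1}|\nabla f_i|^2\,m_\gamma<\infty$, so that \eqref{EQ2FIX} legitimately applies.

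For part~(1), since $X^3$ is a diffusion the martingale additive functionals $M^{f_i}$ are continuous, so by L\'evy's characterization it suffices to identify the mutual brackets $\langle M^{f_i},M^{f_j}\rangle$, which are the PCAFs whose Revuz measures are the mutual energy measures $\mu_{\langle f_i,f_j\rangle}$. For the gradient form $\cE^3(f,g)=\frac12\int\nabla f\cdot\nabla g\,dm_\gamma$ a direct computation of the carr\'e du champ gives $\mu_{\langle f_i,f_j\rangle}(dx)=\nabla f_i\cdot\nabla f_j\,m_\gamma(dx)=\delta_{ij}\,m_\gamma(dx)$, because $\nabla f_i=e_i$. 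As $m_\gamma$ is precisely the symmetrizing measure of $X^3$, the PCAF with Revuz measure $m_\gamma$ is the functional $t$; hence $\langle M^{f_i},M^{f_j}\rangle_t=\delta_{ij}t$, and $M=(M^{f_1},M^{f_2},M^{f_3})$ is a three-dimensional Brownian motion.

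For the formula in part~(2) I would work on the open set $G=\bR^3\setminus\{0\}$, where $f_i$ is smooth and locally in the domain of the generator. Using $C_c^\infty(\bR^3\setminus\{0\})\subset \cD(\cA_\gamma)$ and localization, the zero-energy part satisfies $N^{f_i}_t=\int_0^t \cA_\gamma f_i(X^3_s)\,ds$ for $t<\sigma_0$. Since $\Delta f_i=0$ and $\nabla\log\psi_\gamma(x)=-(\gamma|x|+1)|x|^{-2}x$, the expression before Lemma~\ref{LM1} gives $\cA_\gamma f_i(x)=(\nabla\psi_\gamma/\psi_\gamma)\cdot e_i=-(\gamma|x|+1)|x|^{-2}x_i$, which yields the stated vector formula for $N_t$ on $\{t<\sigma_0\}$.

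The main obstacle is to show that $N$ is \emph{not} of bounded variation, and my plan is an occupation-time argument for the radial process. If some $N^{f_i}$ were of bounded variation, then, by additivity of the CAF and the strong Markov property applied across the excursions of $X^3$ away from $0$ (on each of which the formula of part~(2) is valid), its total variation on $[0,t]$ would be bounded below by $\int_0^t|b_i(X^3_s)|\,ds$ with $b_i(x)=-(\gamma|x|+1)|x|^{-2}x_i$. Summing over $i$ and using $\sum_i|x_i|\geq|x|$ bounds this below, up to a positive constant near the origin, by $\int_0^t|X^3_s|^{-1}\,ds$. Writing $\varrho=|X^3|$ and recalling from Corollary~\ref{COR2} that $\varrho$ is a continuous semimartingale with $\langle\varrho\rangle_t=t$, the occupation-time formula gives $\int_0^t\varrho_s^{-1}\,ds=\int_0^\infty a^{-1}L^a_t\,da$, where $L^a_t$ is the semimartingale local time. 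Because $0$ is regular for itself (the remark after Theorem~\ref{THM3}), for $t>\sigma_0$ the local time at $0$ is positive, so $L^a_t$ stays bounded below by a positive constant as $a\downarrow 0$ and the integral diverges from its lower endpoint. Hence the total variation of $N$ on $[0,t]$ is infinite for $t>\sigma_0$. The delicate point on which I expect to spend the most care is the excursion-theoretic justification that the variation of the globally defined CAF $N^{f_i}$ is genuinely controlled from below by $\int_0^t|b_i(X^3_s)|\,ds$, even though the explicit integral representation is only available before $\sigma_0$.
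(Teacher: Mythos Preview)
The paper does not give its own proof here; it simply says ``By repeating the arguments in \cite[\S4]{FL17}'' and states the result. So there is no in-text argument to compare against, and your task is really to produce a self-contained proof. With that in mind:

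Your treatment of part~(1) and of the formula in part~(2) is correct and standard: the carr\'e du champ computation $\mu_{\langle f_i,f_j\rangle}=\delta_{ij}\,m_\gamma$ together with L\'evy's characterization is exactly the right mechanism, and the localization on $\bR^3\setminus\{0\}$ via $C_c^\infty(\bR^3\setminus\{0\})\subset\cD(\cA_\gamma)$ cleanly yields the integral expression for $N_t$ on $\{t<\sigma_0\}$.

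For the ``not of bounded variation'' part, your probabilistic route via occupation times is a legitimate alternative to the usual Dirichlet-form criterion (namely, \cite[Theorem~5.5.4]{FOT}: $N^{f_i}$ is of bounded variation iff its associated signed measure is smooth, and here the candidate $b_i\,m_\gamma$ fails to be Radon at the origin). Your excursion argument bounding the variation from below by $\int_0^t|b_i(X^3_s)|\,ds$ is sound, and the reduction to $\int_0^t\varrho_s^{-1}\,ds=\int_0^\infty a^{-1}L^a_t(\varrho)\,da$ is the right move. The one point that deserves a line of justification is the claim that $L^a_t(\varrho)$ stays bounded below as $a\downarrow 0$: this is not automatic from regularity of $0$ alone. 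What you need is that $a\mapsto L^a_t$ is continuous on $(0,\infty)$ (because the bounded-variation part of $\varrho$ puts no mass at $a>0$) and that $\lim_{a\downarrow 0}L^a_t$ is a positive multiple of the Dirichlet-form local time $l^0_t$; the latter follows from Tanaka's formula applied to $(\varrho_t-a)^+$ and the SDE for $\varrho$ in Corollary~\ref{COR2}, which identifies the reflection term. Once that limit is seen to be positive for $t>\sigma_0$, the divergence of $\int_0^\epsilon a^{-1}L^a_t\,da$ is immediate. This is a small gap to fill, not a flaw in the strategy.
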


Comparing to the final property of $N$, the radial process $\varrho$ of $X^3$ is a semimartingale as presented in Corollary~\ref{COR2}. 
At a heuristic level, this behavior of $X^3$ is a consequence of the following fact: As noted by Erickson \cite{E90}, the excursions of $X^3$ away from $0$  oscillate so violently that each neighborhood of each point of the unit sphere is visited infinitely often by the angular part of $X^3$.



\section{Distorted Brownian motion  on space with varying dimension}\label{SEC2-2}

Let $$E:=\mathfrak{R}_+\cup \mathfrak{R}^3,$$ where $\fR_+:=\{(x_1,x_2,x_3,r)\in \bR^4: x_1=x_2=x_3=0, r\geq 0\}$($\simeq \bR_+:=[0,\infty)$) and $\mathfrak{R}^3:=\{(x_1,x_2,x_3,r)\in \bR^4: r=0\}$($\simeq \bR^3$), be the state space. For convenience, set the following maps:
\[
\iota_+: \mathbb{R}_+\rightarrow \fR_+, \quad r \mapsto (0,0,0,r),
\]
and
\begin{equation}\label{EQ3IRR}
	\iota_3: \mathbb{R}^3\rightarrow \fR^3, \quad (x_1,x_2,x_3)\mapsto (x_1,x_2,x_3,0).
\end{equation}
This section is devoted to the study of dBMVDs on $E$.

\subsection{One-dimensional part}\label{SEC31}

Let $\rho$ be a function on $\bR_+$ such that
\begin{equation}\label{EQ3RRR}
	\rho>0,\; \text{a.e.}, \quad \rho\text{ and } \frac{1}{\rho}\in L^1_\text{loc}(\bR_+),
\end{equation}
and
\begin{equation}\label{EQ3DRR}
\int_0^\infty \frac{dr}{\rho(r)}\int_0^r \rho(s)ds=\infty.
\end{equation}
Consider the Dirichlet form $(\cE^+,\cF^+)$ on $L^2(\bR_+,m_+):=L^2(\bR_+,\rho(r)dr)$:
\[
\begin{aligned}
	&\cF^+:=\left\{f\in L^2(\bR_+,m_+): f'\in L^2(\bR_+,m_+) \right\}, \\
	&\cE^+(f,g):=\frac{1}{2}\int_{\bR_+}f'(r)g'(r)m_+(dr),\quad f,g\in \cF^+. 
\end{aligned}
\]
The following lemma summarizes the basic facts about $(\cE^+,\cF^+)$. 

\begin{lemma}\label{LM3}
The following hold:
\begin{itemize}
\item[\rm (i)] $(\cE^+,\cF^+)$ is a regular strongly local Dirichlet form on $L^2(\bR_+,m_+)$ with a special standard core $C_c^\infty(\bR_+)$. It is also irreducible and conservative. For all $r\in \bR_+$, the singleton $\{r\}$ is of positive capacity relative to $\cE^+$. 
\item[\rm (ii)] The associated diffusion $X^+$ of $(\cE^+,\cF^+)$ is an irreducible conservative diffusion on $\bR_+$ reflecting at $0$, whose speed measure is $m_+$ and scale function is 
\[
	\ts_+(r)=\int_0^r\frac{1}{\rho(s)}ds,\quad r\geq 0. 
\] 
Furthermore, $X^+$ is transient (resp. recurrent), if and only if $1/\rho\in L^1(\bR_+)$ (resp. $1/\rho\notin L^1(\bR_+)$. 
\end{itemize}
\end{lemma}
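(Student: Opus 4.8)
The plan is to recognize $(\cE^+,\cF^+)$ as the Dirichlet form of a classical one-dimensional regular diffusion and to read off every stated property from the scale function $\ts_+$ and the speed measure $m_+$, invoking the one-dimensional integral tests exactly as was done for the radial process in Corollary~\ref{COR1}. First I would check that $(\cE^+,\cF^+)$ is a symmetric, Markovian, closed form on $L^2(\bR_+,m_+)$: closedness follows because $\cF^+$ is complete in the $\cE^+_1$-norm, a consequence of $\rho,1/\rho\in L^1_\loc(\bR_+)$ (which forces every $f\in\cF^+$ to agree a.e.\ with a locally absolutely continuous function), while the Markovian property is immediate since $\cE^+$ is a pure gradient form, carrying neither killing nor jumping part; by the Beurling--Deny decomposition this also gives strong locality. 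The change of variables $d\ts_+=\rho^{-1}dr$ yields $df/d\ts_+=\rho f'$, so that $\cE^+(f,g)=\frac12\int (df/d\ts_+)(dg/d\ts_+)\,d\ts_+$, which identifies $(\cE^+,\cF^+)$ as the canonical form of the one-dimensional diffusion with scale $\ts_+$ and speed $m_+$; since $C_c^\infty(\bR_+)$ contains functions not vanishing at $0$, the endpoint $0$ is reflecting rather than absorbing.

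For the remaining assertions in (i), I would show $C_c^\infty(\bR_+)$ is both uniformly dense in $C_c(\bR_+)$ (clear) and $\cE^+_1$-dense in $\cF^+$. The latter I would establish by the same two-step device as in the proof of Theorem~\ref{THM1}: first truncate a given $f\in\cF^+$ by cutoffs $\eta_n(r)=\eta(r/n)$ to reduce to bounded, compactly supported elements, the error terms being controlled by dominated convergence together with $f,f'\in L^2(m_+)$ and the boundedness of $\eta'$; then mollify on a compact interval, where $\rho,1/\rho\in L^1_\loc$ let one pass freely between $L^2(m_+)$- and $L^2(dr)$-convergence of the functions and their derivatives. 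As an algebra furnished with the requisite cutoff functions, $C_c^\infty(\bR_+)$ is then a special standard core. Irreducibility follows from the one-dimensional structure: on the connected interval $\bR_+$ with $\rho>0$ a.e., any invariant set has indicator of vanishing $\cE^+$-energy and is therefore $m_+$-a.e.\ constant, so that $m_+(A)\,m_+(\bR_+\setminus A)=0$. Conservativeness is precisely the content of \eqref{EQ3DRR}: by the one-dimensional test (\cite[Example~3.5.7]{CF}, as used for \eqref{EQ2LRT}) the process is conservative iff $\int_0^\infty m_+((0,r))\,d\ts_+(r)=\int_0^\infty \rho^{-1}(r)\big(\int_0^r\rho\big)\,dr=\infty$, which is \eqref{EQ3DRR}. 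Finally, each singleton $\{r\}$ has positive capacity because $1/\rho\in L^1_\loc$ makes $\ts_+$ finite and continuous, so functions in $\cF^+$ are continuous and point evaluation is a bounded functional in a neighborhood of $r$; equivalently every point is regular (non-polar), cf.\ \cite[Theorem~A.3]{L18}.

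For (ii), the identification of $X^+$ as the reflecting diffusion on $\bR_+$ with scale $\ts_+$ and speed $m_+$ is the content of the change of variables above together with the regular representation theory, and its irreducibility and conservativeness are inherited from (i). The recurrence/transience dichotomy I would settle with \cite[Theorem~2.2.11]{CF}, exactly as in Corollary~\ref{COR1}: the diffusion is transient iff $\ts_+(\infty)=\int_0^\infty\rho^{-1}<\infty$ and $m_+((1,\infty))=\int_1^\infty\rho=\infty$, and recurrent otherwise. Here I would note that once $\ts_+(\infty)<\infty$, condition \eqref{EQ3DRR} forces $\int_1^\infty\rho=\infty$ (otherwise $\int_0^r\rho$ would be bounded and the integral in \eqref{EQ3DRR} would reduce to a finite multiple of $\ts_+(\infty)$), so the second requirement is automatic; hence transience is equivalent to $1/\rho\in L^1(\bR_+)$ and recurrence to $1/\rho\notin L^1(\bR_+)$.

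The most delicate step is the $\cE^+_1$-density of $C_c^\infty(\bR_+)$ in $\cF^+$, since the two weights $\rho$ and $1/\rho$ must be handled simultaneously during mollification; the truncation step and the singleton-capacity argument also lean essentially on the local integrability hypotheses. By contrast, conservativeness and the transience dichotomy are bookkeeping once the correct one-dimensional tests from \cite{CF} are invoked, the only genuinely new observation being that \eqref{EQ3DRR} supplies the missing mass condition $\int_1^\infty\rho=\infty$ in the transient regime.
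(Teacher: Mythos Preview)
Your proposal is correct and follows essentially the same approach as the paper, which defers the proof to \cite[\S3.4]{LY172} while noting precisely the points you emphasize: irreducibility from $1/\rho\in L^1_\loc(\bR_+)$, conservativeness from \eqref{EQ3DRR}, and the recurrence/transience dichotomy from \cite[Theorem~2.2.11]{CF}. Your observation that \eqref{EQ3DRR} together with $\ts_+(\infty)<\infty$ forces $\int_1^\infty\rho=\infty$ is exactly the bookkeeping needed to reduce the transience criterion to the single condition $1/\rho\in L^1(\bR_+)$; one small caveat is that in the mollification step, local integrability of $\rho$ and $1/\rho$ alone does not make the $L^2(m_+)$- and $L^2(dr)$-norms locally equivalent, so the approximation of compactly supported elements of $\cF^+$ by smooth functions requires a slightly more careful argument than the phrase ``pass freely'' suggests---but you have already flagged this as the delicate point.
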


The diffusion $X^+$ is usually called a \emph{distorted Brownian motion} on $\bR_+$; see e.g. \cite{L18}. 
The proof of Lemma~\ref{LM3} is referred to \cite[\S3.4]{LY172}. We should point out that $1/\rho\in L^1_\mathrm{loc}(\bR_+)$ implies the irreducibility of $(\cE^+,\cF^+)$ and the conservativeness is a consequence of \eqref{EQ3DRR}. The recurrence or transience of $X^+$ is indicated by \cite[Theorem~2.2.11]{CF}. 


\begin{example}\label{EXA1}
An interesting example is the radial process $\varrho=(\varrho_t)_{t\geq 0}$ of $X^3$ appearing in Lemma~\ref{LM2}. In this case, $\rho(r)=\re^{-2\gamma r}/\pi$ satisfies \eqref{EQ3RRR} and \eqref{EQ3DRR}.

Let $\hat{\varrho}$ be the diffusion on $\bR$ obtained by the symmetrization of $\varrho$. In other words, $\hat{\varrho}$ is associated with the energy form induced by the symmetric measure 
\[
	\hat{\ell}_\gamma(dr):=\frac{\mathrm{e}^{-2\gamma |r|}}{\pi}dr,\quad r\in \bR.
\]
With $\hat\phi_\gamma(r)=\mathrm{e}^{-\gamma |r|}/\sqrt{\pi}$ ($r\in \bR$) in place of $\psi_\gamma$, $\hat{\varrho}$ plays the same role as $X^3$ in the analogical one-dimensional model (parametrized by $\gamma$) of that explained in \S\ref{SEC22}. Particularly, under a similar $h$-transform to \eqref{generator-3d-dBM}, the generator of $\hat{\varrho}$ corresponds to a self-adjoint extension of $\frac{1}{2}\Delta$ restricting to $C_c^\infty(\bR\setminus \{0\})$. See e.g. \cite[Appendix F]{AGH05}.
\end{example}


 

\subsection{Definition}\label{SEC32}

Fix $\gamma\in \bR$ and a function $\rho$ satisfying \eqref{EQ3RRR} and \eqref{EQ3DRR} as above. Take a positive constant $\tp>0$. In this subsection, we rigorously give the definition for the so-called $(\rho, \gamma)$-dBMVD with the parameter $\tp$ on $E$. 

Recall that $E$ consists of two components $\fR_+$ and $\fR^3$. Roughly speaking, the distribution of such a dBMVD on $\fR_+$ (resp. $\fR^3$) is induced by the dBM $X^+$ (resp. $X^3$) in \S\ref{SEC31} (resp. Theorem~\ref{THM1}). To be more precise, set $M^+=(M^+_t)_{t\geq 0}:=(\iota_+(X^+_t))_{t\geq 0}$ and $M^3=(M^3_t)_{t\geq 0}:=(\iota_3(X^3_t))_{t\geq 0}$. Then $M^+$ is symmetric with respect to $\fm_+:=m_+\circ \iota_+^{-1}$ and associated with the Dirichlet form on $L^2(\fR_+, \fm_+)$:
\[
\begin{aligned}
	\sF^+&:=\{f: f\circ \iota_+\in \mathcal{F}^+\}, \\
	\sE^+(f,g)&:=\mathcal{E}^+(f\circ \iota_+, g\circ \iota_+),\quad f,g\in \sF^+. 
\end{aligned}
\]  
Accordingly, $M^3$ is symmetric with respect to $\fm_3:=m_\gamma\circ \iota_3^{-1}$ and associated with the Dirichlet form on $L^2(\fR^3,\fm_3)$:
\[
\begin{aligned}
	\sF^3&:=\{f: f\circ \iota_3\in \mathcal{F}^3\}, \\
	\sE^3(f,g)&:=\mathcal{E}^3(f\circ \iota_3, g\circ \iota_3),\quad f,g\in \sF^3. 
\end{aligned}
\]  
Define a measure $\fm$ on $E$ by $\fm|_{\fR_+}:=\tp\cdot\fm_+$ and $\fm|_{\fR^3}:=\fm_3$. Denote the density function by
\begin{equation}\label{EQ3HAG}
	h_{\rho, \gamma}(x):=\left\lbrace
		\begin{aligned}
			& \sqrt{\tp\cdot\rho(r)},\qquad\quad\; x=(0,0,0,r)\in \fR_+, \\
			&  \psi_\gamma((x_1,x_2,x_3)),\quad x=(x_1,x_2,x_3,0)\in \fR^3.
		\end{aligned}
	\right.
\end{equation} 
In other words, $\fm(dx)=h_{\rho,\gamma}(x)^2\mathtt{l}(dx)$, where $\mathtt{l}|_{\fR_+}$ is the one-dimensional Lebesgue measure and $\mathtt{l}|_{\fR^3}$ is the three-dimensional Lebesgue measure.  Then we introduce the following definition.

\begin{definition}[Distorted Brownian motion with varying dimension]\label{definition-dBMVD}
Fix $\tp>0$, $\gamma\in \bR$ and a function $\rho$ on $\bR_+$ satisfying \eqref{EQ3RRR} and \eqref{EQ3DRR}. A $(\rho,\gamma)$-distorted Brownian motion with varying dimension ($(\rho,\gamma)$-dBMVD in abbreviation) with parameter $\tp$ on $E$ is an $\fm$-symmetric irreducible diffusion $M=\{(M_t)_{t\geq 0}, \left(\mathbb{P}_x\right)_{x\in E}\}$ of no killing on $\{\b0\}$ such that 
\begin{description}
\item{\rm (i)} The part process of $M$ on $\fR_+\setminus \{\b0\}$ is equivalent to that of $M^+$;
\item{\rm (ii)} The part process of $M$ on $\fR^3\setminus \{\b0\}$ is equivalent to that of $M^3$. 
\end{description}
\end{definition}

This notion $(\rho,\gamma)$-dBMVD with parameter $\tp$ will be called dBMVD for short if no confusions caused. The uniqueness of dBMVD in law can be concluded by the following argument. Let $M^{+, \b0}$ (resp. $M^{3,\b0}$) be the part process of $M^+$ (resp. $M^3$) on $\fR_+\setminus \{\b0\}$ (resp. $\fR^3\setminus \{\b0\}$). Define a new Markov process $M^\b0$ on $E\setminus \{\b0\}$ by $M^\b0|_{\fR_+\setminus \{0\}}:=M^{+,\b0}$ and $M^\b0|_{\fR^3\setminus \{0\}}:=M^{3,\b0}$. Then the dBMVD $M$ is nothing but the one-point reflection of $M^\b0$ at $\b0$ by definition. The uniqueness of dBMVD in law is a consequence of \cite[Theorem~7.5.4]{CF}, Theorem~\ref{THM3} and Corollary~\ref{COR1}. 

\begin{remark}
It is worth noting that the parameter $\tp$ plays a role only in the symmetric measure $\fm$ (see the notes before Corollary~\ref{COR5}).  For different $\tp$, the dBMVDs are different as will be shown in Remark~\ref{RM12}, but their distributions out of the origin are exactly the same according to the definition.
\end{remark}

\subsection{Dirichlet form characterization of dBMVD}

 The main result of this section below gives the associated Dirichlet form of dBMVD. Recall that the Dirichlet forms $(\sE^+,\sF^+)$ and $(\sE^3,\sF^3)$ are given in \S\ref{SEC32}. Usually every function in a Dirichlet space is taken to be its quasi-continuous version tacitly. For $f\in \sF^+$ (resp. $f\in \sF^3$), the $\sE^+$-quasi-continuous (resp. $\sE^3$-quasi-continuous) version of $f$ will be denoted by ${}^+\tilde{f}$ (resp. ${}^3\tilde{f}$) when there is a risk of ambiguity. Since $\b0$ is of positive capacity relative to $\sE^+$ or $\sE^3$ due to Lemma~\ref{LM3} or Theorem~\ref{THM3}, ${}^+\tilde{f}$ or ${}^3\tilde{f}$ is well defined at $\b0$.

\begin{theorem}\label{THM2}
Let $(\sE^+,\sF^+)$ and $(\sE^3,\sF^3)$ be given in \S\ref{SEC32}. Then the quadratic form
\[
\begin{aligned}
&\sF:=\left\{ f\in L^2(E,\fm): f|_{\fR_+}\in \sF^+, f|_{\fR^3}\in \sF^3, {}^+\widetilde{f|_{\fR_+}}(\b0)={}^3\widetilde{f|_{\fR^3}}(\b0) \right\},\\
&\sE(f,g):=\tp\cdot \sE^+(f|_{\fR_+}, g|_{\fR_+})+\sE^3(f|_{\fR^3}, g|_{\fR^3}),\quad f,g\in \sF
\end{aligned}
\]
is a regular, strongly local and irreducible Dirichlet form on $L^2(E,\fm)$, whose associated Markov process is identified with the unique $(\rho,\gamma)$-dBMVD with parameter $\tp$ $M$.
\end{theorem}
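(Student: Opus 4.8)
The plan is to establish the four claimed properties in turn---(a) $(\sE,\sF)$ is a Dirichlet form, (b) it is regular, (c) it is strongly local and irreducible, and (d) its associated process is the unique dBMVD $M$---and to treat (d) as the real content, since the others are largely structural. First I would verify that $(\sE,\sF)$ is closed and Markovian on $L^2(E,\fm)$. The Markovian property is immediate because the unit contraction of an admissible $f$ (one satisfying the matching condition at $\b0$) contracts both restrictions $f|_{\fR_+}$ and $f|_{\fR^3}$ compatibly, preserving the equality ${}^+\widetilde{f|_{\fR_+}}(\b0)={}^3\widetilde{f|_{\fR^3}}(\b0)$, and each summand is Markovian. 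For closedness I would argue that $\sF$ is a closed subspace of the Hilbert space $\sF^+\oplus\sF^3$ (with the direct-sum $\sE_1$-norm): if $f_n\to f$ in $\sE_1$-norm, then the restrictions converge in $\sE^+_1$ and $\sE^3_1$ respectively, so by the continuity of the evaluation maps $g\mapsto {}^+\tilde g(\b0)$ and $g\mapsto {}^3\tilde g(\b0)$ at the point $\b0$ (which has positive capacity relative to each form by Lemma~\ref{LM3} and Theorem~\ref{THM3}) the matching condition passes to the limit. The strong locality follows termwise from the strong locality of $\cE^+$ (Lemma~\ref{LM3}) and $\cE^3$.

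For regularity, the plan is to exhibit a core. A natural candidate is the set of functions $f$ on $E$ whose two restrictions are respectively in $C_c^\infty(\bR_+)$ (pulled back via $\iota_+$) and $C_c^\infty(\bR^3)$ (pulled back via $\iota_3$), subject only to agreement of their values at $\b0$. Such functions are continuous on $E$ with compact support, so the density of this class in $C_c(E)$ in the uniform norm is routine (one glues a one-dimensional bump to a three-dimensional bump matched at the origin). The $\sE_1$-density in $\sF$ then reduces, via the direct-sum structure, to the facts that $C_c^\infty(\bR_+)$ is a core for $(\cE^+,\cF^+)$ and $C_c^\infty(\bR^3)$ is a special standard core for $(\cE^3,\cF^3)$, the latter being exactly Theorem~\ref{THM1}(i). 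The only subtlety is handling the boundary value at $\b0$: given $f\in\sF$ with common value $a:={}^+\widetilde{f|_{\fR_+}}(\b0)={}^3\widetilde{f|_{\fR^3}}(\b0)$, I would approximate each restriction by smooth functions while keeping their value at $\b0$ equal to $a$, which is possible because $\{\b0\}$ has positive capacity so that pointwise value at $\b0$ is a continuous functional. Irreducibility follows because any $\sE$-invariant set would induce invariant sets for both $\cE^+$ and $\cE^3$, each of which is irreducible, forcing the set to be trivial on each piece; the matching at $\b0$ then glues these into a globally trivial set.

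The main obstacle, and the crux of the theorem, is identifying the process of $(\sE,\sF)$ with the dBMVD $M$. Here the plan is to show that the part Dirichlet form of $(\sE,\sF)$ on $E\setminus\{\b0\}$ decouples as the orthogonal sum of the part forms of $(\sE^+,\sF^+)$ on $\fR_+\setminus\{\b0\}$ and of $(\sE^3,\sF^3)$ on $\fR^3\setminus\{\b0\}$. Indeed, imposing $\tilde f(\b0)=0$ removes the matching constraint entirely---both sides are forced to vanish at $\b0$---so $\sF^{E\setminus\{\b0\}}$ splits as $\sF^{+,\fR_+\setminus\{\b0\}}\oplus\sF^{3,\fR^3\setminus\{\b0\}}$ and $\sE$ splits correspondingly using formula \eqref{EQ5FGF}. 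This identifies the part process of the $(\sE,\sF)$-process on $E\setminus\{\b0\}$ with $M^\b0$, i.e.\ with $M^{+,\b0}$ on $\fR_+\setminus\{\b0\}$ and $M^{3,\b0}$ on $\fR^3\setminus\{\b0\}$, verifying properties (i) and (ii) of Definition~\ref{definition-dBMVD}. Since $\fm(\{\b0\})=0$, the process is $\fm$-symmetric with no killing at $\b0$ (strong locality precludes a killing part), so it is a one-point reflection of $M^\b0$ at $\b0$.

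Finally I would invoke uniqueness. The condition for a one-point reflection to exist and be unique, via \cite[Theorem~7.5.4]{CF}, is that $\b0$ is approached with positive probability from at least one side and that the relevant hitting conditions hold; these are supplied by Theorem~\ref{THM3} (giving $\bfP^3_x(\sigma_0<\infty)>0$, so the three-dimensional side reaches $\b0$) together with the reflecting, irreducible nature of $X^+$ at the origin from Lemma~\ref{LM3}, and by the conservativeness recorded in Corollary~\ref{COR1}. Because the one-point reflection is unique in law, the $(\sE,\sF)$-process must coincide with the dBMVD $M$ of Definition~\ref{definition-dBMVD}. I expect the delicate points to be, first, the continuity of the boundary-evaluation functional at $\b0$ needed for both closedness and the core construction, and second, the careful verification that the decoupling of the part form is exact---that no interaction term at $\b0$ survives once the common boundary value is set to zero.
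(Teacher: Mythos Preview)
Your proposal is correct and follows essentially the same route as the paper's proof: closedness via positive capacity of $\{\b0\}$ ensuring the matching condition passes to limits, regularity via the glued core of smooth functions with matched values at $\b0$, irreducibility by reducing to the irreducibility of the two component forms, and identification with the dBMVD by observing that the part form on $E\setminus\{\b0\}$ decouples once the common boundary value is forced to zero, followed by the one-point reflection uniqueness from \cite[Theorem~7.5.4]{CF}.

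One small point where the paper is more explicit than your sketch: for the $\sE_1$-density of the core, merely knowing that evaluation at $\b0$ is continuous does not by itself let you approximate each restriction by smooth functions \emph{with prescribed value} $a$ at $\b0$. The paper handles this by first approximating $f|_{\fR_+}$ and $f|_{\fR^3}$ freely by ${}^+h_\varepsilon\in\sC^+$ and ${}^3h_\varepsilon\in\sC^3$, and then correcting each by a fixed bump function (${}^+g$ with ${}^+g(\b0)=1$, and similarly ${}^3g$) multiplied by the discrepancy $f(\b0)-{}^{+/3}h_\varepsilon(\b0)$, which is small by the very continuity of the evaluation functional. This is the concrete mechanism behind your phrase ``approximate while keeping their value at $\b0$ equal to $a$''; you have identified the subtlety, just not the correction step that resolves it.
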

\begin{proof}
Clearly, $(\sE,\sF)$ is a symmetric bilinear form satisfying the Markovian property. The strong locality of $(\sE,\sF)$ is indicated by that of $(\sE^+,\sF^+)$ and $(\sE^3,\sF^3)$. To show its closeness, take an $\sE_1$-Cauchy sequence $\{f_n:n\geq 1\}\subset \sF$. Then $f_n|_{\fR_+}$ is $\sE^+_1$-Cauchy and $f_n|_{\fR^3}$ is $\sE^3_1$-Cauchy. It follows from \cite[Theorem~2.1.4]{FOT} that there exists a subsequence $\{f_{n_k}:k\geq 1\}$ of $\{f_n\}$ and ${}^+f\in \sF^+, {}^3f\in \sF^3$ such that   
\[
\begin{aligned}
	&{}^+\widetilde{f_{n_k}|_{\fR_+}}\rightarrow {}^+ f,\quad \sE^+\text{-q.e.},  \\
	&{}^3\widetilde{f_{n_k}|_{\fR^3}}\rightarrow {}^3 f,\quad \sE^3\text{-q.e.}
\end{aligned}\]
and 
\[
	\tp\vee 1 \cdot \sE^+_1(f_n|_{\fR_+}-{}^+f,f_n|_{\fR_+}-{}^+f)+\sE^3_1(f_n|_{\fR^3}-{}^3f,f_n|_{\fR^3}-{}^3f)\rightarrow 0,\quad n\uparrow \infty. 
\]
Note that $\{\b0\}$ is of positive capacity relative to $\sE^+$ or $\sE^3$. This implies 
\[
	{}^+f(\b0)=\lim_{k\uparrow\infty}{}^+\widetilde{f_{n_k}|_{\fR_+}}(\b0)=\lim_{k\uparrow\infty}{}^3\widetilde{f_{n_k}|_{\fR^3}}(\b0)={}^3 f(\b0). 
\]
Hence the function $f$ given by $f|_{\fR_+}:={}^+f$ and $f|_{\fR^3}:={}^3f$ is well defined on $E$. In addition, $f\in \sF$ and
\[
	\sE_1(f_n-f,f_n-f)\leq \tp\vee 1\cdot \sE^+_1(f_n|_{\fR_+}-{}^+f,f_n|_{\fR_+}-{}^+f)+\sE^3_1(f_n|_{\fR^3}-{}^3f,f_n|_{\fR^3}-{}^3f)\rightarrow 0. 
\]
Therefore the closeness of $(\sE,\sF)$ is verified. 

Next, let us prove the regularity of $(\sE,\sF)$. Take a special standard core $\mathscr{C}^+$ of $(\sE^+,\sF^+)$ and a special standard core $\mathscr{C}^3$ of $(\sE^3,\sF^3)$; for example, $\sC^+:=C_c^\infty(\bR_+)\circ \iota_+^{-1}$ and $\sC^3:=C_c^\infty(\bR^3)\circ \iota_3^{-1}$. Set
\begin{equation}\label{EQ3CFF}
	\mathscr{C}:=\{f\in \sF: f|_{\fR_+}\in  \sC^+, f|_{\fR^3}\in \sC^3\}.
\end{equation}
It suffices to show $\sC$ is dense in $C_c(E)$ relative to the uniform norm and dense in $\sF$ relative to the $\sE_1$-norm. On one hand, $\sC$ is clearly an algebra, i.e. $f,g\in \sC$ implies $c_1\cdot f+c_2 \cdot g, f\cdot g\in \sC$ for any constants $c_1,c_2$. In addition, $\sC$ can separate points in $E$ by the following argument: Without loss of generality, consider $x\in \fR_+,y\in \fR^3\setminus \{\b0\}$. Since $\sC^+$ is a special standard core of $(\sE^+,\sF^+)$, there exists a function ${}^+f\in \sC^+$ such that ${}^+f(\b0)={}^+f(x)=1$. Another function ${}^3f\in \sC^3$ can be taken to separate $\b0$ and $y$, i.e. ${}^3f(\b0)\neq {}^3f(y)$. Define a function $f$ on $E$ by
\[
	f|_{\fR_+}:= {}^3f(\b0)\cdot {}^+f,\quad f|_{\fR^3}:={}^3f. 
\]
Then $f\in \sC$ and $f(x)={}^3f(\b0)\neq {}^3f(y)=f(y)$. Thus by the Stone-Weierstrass theorem, $\sC$ is dense in $C_c(E)$ relative to the uniform norm. On the other hand, fix $f\in \sF$ and a small constant $\varepsilon>0$. Take ${}^+g\in \sC^+$ with ${}^+g(\b0)=1$ and ${}^3g\in \sC^3$ with ${}^3g(\b0)=1$. Let $C_+:=\|{}^+g\|_{\sE^+_1}$ and $C_3:=\|{}^3g\|_{\sE^3_1}$. By \cite[Theorem~2.1.4]{FOT}, there exist two functions ${}^+h_\varepsilon \in \sC^+$ and ${}^3h_\varepsilon\in \sC^3$ such that 
\[
\begin{aligned}
&\|{}^+h_\varepsilon-f|_{\fR_+}\|_{\sE^+_1}<\frac{\varepsilon}{4\sqrt{\tp\vee 1}}, \quad |{}^+h_\varepsilon(\b0)-f(\b0)|<\frac{\varepsilon}{4C_+\sqrt{\tp\vee 1}};  \\
&\|{}^3h_\varepsilon-f|_{\fR^3}\|_{\sE^3_1}<\varepsilon/4, \quad |{}^3h_\varepsilon(\b0)-f(\b0)|<\frac{\varepsilon}{4C_3}.
\end{aligned}
\]
Define a function $f_\varepsilon$ on $E$ by
\[
\begin{aligned}
	& f_\varepsilon|_{\fR_+}:={}^+h_\varepsilon+ \left(f(\b0)-{}^+h_\varepsilon(\b0) \right)\cdot {}^+g, \\
	& f_\varepsilon|_{\fR^3}:={}^3h_\varepsilon+ \left(f(\b0)-{}^3h_\varepsilon(\b0) \right)\cdot {}^3g.
\end{aligned}
\]
Then $f_\varepsilon\in \sC$ and 
\[
\begin{aligned}
\|f_\varepsilon-f\|_{\sE_1}&\leq \sqrt{\tp\vee 1}\cdot\|f_\varepsilon|_{\fR_+}-f|_{\fR_+}\|_{\sE^+_1} +\|f_\varepsilon|_{\fR^3}-f|_{\fR^3}\|_{\sE^3_1} \\
&\leq \sqrt{\tp\vee 1}\cdot\|{}^+h_\varepsilon-f|_{\fR_+}\|_{\sE^+_1}+\sqrt{\tp\vee 1}\cdot|{}^+h_\varepsilon(\b0)-f(\b0)|\cdot \|{}^+g\|_{\sE^+_1} \\
 &\qquad \qquad\qquad  +\|{}^3h_\varepsilon-f|_{\fR^3}\|_{\sE^3_1}+|{}^3h_\varepsilon(\b0)-f(\b0)|\cdot \|{}^3g\|_{\sE^3_1} \\
 &<\varepsilon. 
\end{aligned}\]
This tells us that $\sC$ is dense in $\sF$ relative to the $\sE_1$-norm.

Furthermore, we derive the irreducibility of $(\sE,\sF)$. Take an $\fm$-invariant set $A\subset E$, and we need to show $\fm(A)=0$ or $\fm(A^c)=0$. Firstly, let $(\sA^+,\sG^+)$ be the part Dirichlet form of $(\sE,\sF)$ on $\fR_+\setminus \{\b0\}$ and consider $f,g\in \sG^+\subset \sF$. It follows from  \cite[Theorem~1.6.1]{FOT} that $f\cdot 1_A, g\cdot 1_A\in \sF$ and 
\[
	\sE(f,g)=\sE(f1_A,g1_A)+\sE(f1_{A^c},g1_{A^c}). 
\]
Set $A_+:=A\cap (\fR_+\setminus \{\b0\})$. Since $f|_{\fR^3}=g|_{\fR^3}\equiv 0$, the expression of $\sF$ yields $f\cdot 1_{A_+}, g\cdot 1_{A_+}\in \sG^+$ and
\[
	\sA^+(f,g)=\sA^+(f1_{A_+},g1_{A_+})+\sA^+(f1_{A_+^c},g1_{A_+^c}). 
\] 
Using \cite[Theorem~1.6.1]{FOT} again, we have $A_+$ is an $\fm_+$-invariant set relative to $\sA^+$. Note that $(\sA^+,\sG^+)$ is clearly irreducible and thus $\fm_+(A_+)=0$ or $\fm_+(A_+^c)=0$. Analogically set $A_3:=A\cap (\fR^3\setminus \{\b0\})$ and we can also obtain that $\fm_3(A_3)=0$ or $\fm_3(A_3^c)=0$. Secondly, it suffices to show that $\fm_+(A_+)=\fm_3(A_3^c)=0$ or $\fm_+(A_+^c)=\fm_3(A_3)=0$ is impossible. Take a function $f\in \sC$ such that $f(x)=1$ for $|x|\leq 1$. These two cases both contradict to $f\cdot 1_A\in \sF$. Eventually the irreducibility of $(\sE,\sF)$ is concluded.  


Finally, the part Dirichlet form of $(\sE,\sF)$ on $\fR_+\setminus \{\b0\}$ (resp. $\fR^3\setminus \{\b0\}$) is clearly associated with the same Markov process as that of $(\sE^+,\sF^+)$ (resp. $(\sE^3,\sF^3)$) on $\fR_+\setminus \{\b0\}$ (resp. $\fR^3\setminus \{\b0\}$). Therefore, the associated Markov process of $(\sE,\sF)$ is nothing but the dBMVD by definition. This completes the proof. \qed
\end{proof}

It is worth noting that $M$ as well as $(\sE,\sF)$ is always conservative as will be shown in Remark~\ref{RM12}.

\begin{remark}
A number of irreducible Markov processes on $E$ can be analogically obtained by using other processes on $\fR_+$ and $\fR^3$, relative to which the origin is of positive capacity. Every irreducible and symmetric diffusion on $\fR_+$ (reflecting at $\b0$) is such an example on $\fR_+$, see e.g. \cite{LY172}. An example of pure-jump process on $\fR^3$ appeared in a recent work \cite{LX19}. 
\end{remark}

\subsection{Basics of $(\sE,\sF)$ and $M$}

Denote the capacities relative to $\sE$, $\sE^+$ and $\sE^3$ by $\text{Cap}$, ${}^+\text{Cap}$ and ${}^3\text{Cap}$ respectively. The following proposition characterizes the sets of capacity zero relative to $\sE$.

\begin{proposition}\label{PRO1}
The set $A\subset E$ is of capacity zero relative to $\sE$, if and only if $A\subset \fR^3\setminus \{\b0\}$ and $A$ is of capacity zero relative to $\sE^3$. Particularly for any $x\in \fR_+$, $\text{Cap}(\{x\})>0$ but for any $x\in \fR^3\setminus \{\b0\}$, $\text{Cap}(\{x\})=0$.
\end{proposition}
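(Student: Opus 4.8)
The plan is to move capacities back and forth between $(\sE,\sF)$ and its two building blocks $(\sE^+,\sF^+)$ and $(\sE^3,\sF^3)$, using restriction to each component on the one hand and the passage to part Dirichlet forms on the other. The key elementary identity is that, for every $f\in\sF$, $\sE_1(f,f)=\tp\cdot\sE^+_1(f|_{\fR_+},f|_{\fR_+})+\sE^3_1(f|_{\fR^3},f|_{\fR^3})$, which holds because $\fm|_{\fR_+}=\tp\cdot\fm_+$ and $\fm|_{\fR^3}=\fm_3$; in particular $\sE_1(f,f)\ge \tp\cdot\sE^+_1(f|_{\fR_+},f|_{\fR_+})$. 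First I would use this to show that every point of $\fR_+$ has positive $\sE$-capacity. Indeed, if $A\subset\fR_+$ and $f\in\sF$ satisfies $f\ge 1$ $\fm$-a.e. on an open set $U\supset A$, then $f|_{\fR_+}\in\sF^+$ satisfies $f|_{\fR_+}\ge 1$ a.e. on the (relatively open in $\fR_+$) neighborhood $U\cap\fR_+$ of $A$, so that ${}^+\text{Cap}(A)\le \tp^{-1}\,\text{Cap}(A)$. Since Lemma~\ref{LM3} gives ${}^+\text{Cap}(\{x\})>0$ for every $x\in\fR_+$ (the origin $\b0$ included), it follows that $\text{Cap}(\{x\})>0$ for all $x\in\fR_+$, and hence that any $\sE$-polar set is disjoint from $\fR_+$, i.e.\ contained in $\fR^3\setminus\{\b0\}$.

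It then remains to pin down the zero-capacity subsets of $G_3:=\fR^3\setminus\{\b0\}$. The proof of Theorem~\ref{THM2} already shows that the part Dirichlet form of $(\sE,\sF)$ on $G_3$ is identical to the part of $(\sE^3,\sF^3)$ on $G_3$. I would then invoke the standard fact that, for an open set $G$ and a subset $A\subset G$, having zero capacity relative to a regular Dirichlet form is equivalent to having zero capacity relative to its part form on $G$ (see \cite[\S4.4]{FOT}); probabilistically this just says that $A\subset G$ is reached with positive probability by the full process precisely when it is reached by the part process before the latter leaves $G$, which for the continuous diffusion $M$ is immediate from path-continuity and the strong Markov property. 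Applying this equivalence to $(\sE,\sF)$ and to $(\sE^3,\sF^3)$ over the common open set $G_3$ yields, for $A\subset G_3$, the chain $\text{Cap}(A)=0\iff {}^3\text{Cap}(A)=0$. Together with the previous paragraph this establishes both directions of the stated equivalence.

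Lastly, the singleton assertions drop out: $\text{Cap}(\{x\})>0$ for $x\in\fR_+$ is exactly what was shown above, while for $x\in\fR^3\setminus\{\b0\}$ one has ${}^3\text{Cap}(\{x\})=0$ because, by Theorem~\ref{THM3}, the part of $X^3$ on $\bR^3\setminus\{0\}$ is the $h$-transform ${}_hW^\gamma$ of three-dimensional Brownian motion with the strictly positive factor $h=\psi_\gamma$; as $h$-transforms preserve polar sets and singletons are polar for three-dimensional Brownian motion, $\{x\}$ is $\sE^3$-polar, and the main equivalence then forces $\text{Cap}(\{x\})=0$. I expect the only genuinely delicate step to be the part-form capacity equivalence on $G_3$, where one must ensure that the functions approximating the equilibrium potential of an $A\subset G_3$ can be taken to vanish quasi-everywhere at the shared point $\b0$ (i.e.\ to lie in the part space $\sF^{3,G_3}$, not merely in $\sF^3$), precisely because $\b0$ itself carries positive capacity; this is exactly the content of the cited equivalence and is what legitimizes extending such functions by zero across $\b0$ into $\sF$.
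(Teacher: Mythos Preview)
Your proposal is correct and follows essentially the same route as the paper: both reduce to the part-form capacity equivalence on $G_3=\fR^3\setminus\{\b0\}$ via \cite[Theorem~4.4.3]{FOT}, using that the part of $(\sE,\sF)$ on $G_3$ coincides with the part of $(\sE^3,\sF^3)$ there. The one genuine (and slightly cleaner) variation is your treatment of $\fR_+$: you compare capacities directly through the inequality $\sE_1(f,f)\ge \tp\cdot\sE^+_1(f|_{\fR_+},f|_{\fR_+})$, which yields $\text{Cap}(A)\ge \tp\cdot{}^+\text{Cap}(A)$ for $A\subset\fR_+$ and handles $\b0$ uniformly with the other points, whereas the paper instead passes through the part form on $\fR_+\setminus\{\b0\}$ and invokes \cite[Theorem~4.4.3]{FOT} on that side as well.
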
  
\begin{proof}
Let $(\sA^+,\sG^+)$ (resp. $(\sA^3,\sG^3)$) be the part Dirichlet form of $(\sE^+,\sF^+)$ (resp. $(\sE^3,\sF^3)$) on $\fR_+\setminus \{\b0\}$ (resp. $\fR^3\setminus \{\b0\}$). Then $(\sA^+,\sG^+)$ and $(\sA^3,\sG^3)$ are also the part Dirichlet forms of $(\sE,\sF)$ on $\fR_+\setminus \{\b0\}$ and $\fR^3\setminus \{\b0\}$ respectively.
Since every singleton of $\fR_+$ is of positive capacity relative to $\sE^+$, it is of positive capacity relative to $\sA^+$ as well as $\sE$ by applying \cite[Theorem~4.4.3]{FOT}. Hence any set of capacity zero relative to $\sE$ must be a subset of $\fR^3\setminus \{\b0\}$. By using \cite[Theorem~4.4.3]{FOT} again, a set $B\subset \fR^3\setminus \{\b0\}$ is of capacity zero relative to $\sE$, if and only if $B$ is of capacity zero relative to $\sA^3$ as well as $\sE^3$. This completes the proof. 
\qed \end{proof}

The behaviour of $M$ near the origin $\b0$ is crucial to the understanding of it. As indicated in Proposition~\ref{PRO1}, $\b0$ is of positive capacity relative to $\sE$. This leads to 
\begin{equation}\label{EQ3PXS}
	\mathbb{P}_x(\sigma_{\b0}<\infty)>0
\end{equation}
for $\sE$-q.e. $x\in E$, where $\sigma_{\b0}:=\inf\{t>0:M_t=\b0\}$ is the hitting time of $\{\b0\}$, by applying \cite[Theorem~4.7.1~(i)]{FOT}. Furthermore, the origin $\b0$ is called regular for a set $B\subset E$ with respect to $M$ if $\bP_\b0(\sigma_B=0)=1$ where $\sigma_B:=\inf\{t>0:M_t\in B\}$. Then we have the following.

\begin{corollary}
$\b0$ is regular for $\fR_+\setminus \{\b0\}$, $\{\b0\}$ and $\fR^3\setminus \{\b0\}$ with respect to $M$ respectively. 
\end{corollary}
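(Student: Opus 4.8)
The plan is to handle all three assertions at once through It\^o's excursion decomposition of $M$ at $\b0$, after first reducing each statement to a positivity statement via Blumenthal's zero--one law. Since $M$ is a diffusion (a continuous $\fm$-symmetric Hunt process) and the events $\{\sigma_B=0\}$ for $B=\fR_+\setminus\{\b0\}$, $\{\b0\}$, $\fR^3\setminus\{\b0\}$ all lie in the germ field $\cap_{t>0}\mathcal{F}_t$, the zero--one law forces $\bP_\b0(\sigma_B=0)\in\{0,1\}$; hence it suffices to show that, under $\bP_\b0$, the process enters each of these sets at arbitrarily small positive times.

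For the regularity of $\b0$ for itself I would invoke the construction of $M$ as the one--point reflection of $M^{\b0}$ at $\b0$ recorded below Definition~\ref{definition-dBMVD}. Because $\b0$ has positive capacity (Proposition~\ref{PRO1}), $\fm(\{\b0\})=0$, and $M$ has no killing at $\b0$, the one--point reflection of \cite{CF15} (see also \cite[\S7.5]{CF}) turns $\b0$ into a genuine reflecting pole: it is regular for itself, i.e. $\bP_\b0(\sigma_{\{\b0\}}=0)=1$, and $M$ carries a positive continuous additive functional $\ell=(\ell_t)_{t\ge 0}$ (a local time) with support $\{\b0\}$, together with an associated It\^o excursion measure $n$ governing the excursions of $M$ away from $\b0$ under $\bP_\b0$.

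The remaining two assertions then follow by splitting $n$ according to the part of $E$ an excursion visits. By path-continuity every excursion away from $\b0$ stays entirely in $\fR_+\setminus\{\b0\}$ or entirely in $\fR^3\setminus\{\b0\}$, so $n=n^++n^3$, where $n^+$ (resp. $n^3$) is carried by the excursions into $\fR_+\setminus\{\b0\}$ (resp. $\fR^3\setminus\{\b0\}$). Since the part process of $M$ on $\fR_+\setminus\{\b0\}$ (resp. $\fR^3\setminus\{\b0\}$) coincides with that of $M^+$ (resp. $M^3$) by Definition~\ref{definition-dBMVD}, the measures $n^+$ and $n^3$ agree, up to the normalisation fixed by $\ell$, with the excursion measures of $M^+$ and $M^3$ away from $\b0$. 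Now $X^+$ is reflecting at $0$ (Lemma~\ref{LM3}(ii)) and, by the Remark following Theorem~\ref{THM3}, $0$ is regular for itself with respect to $X^3$ while $m_\gamma(\{0\})=0$ precludes any holding; consequently both building blocks leave and return to the origin instantaneously, so $n^+$ and $n^3$ are nonzero (in fact of infinite total mass). Nontriviality of $n^+$ (resp. $n^3$) means that, under $\bP_\b0$, excursions into $\fR_+\setminus\{\b0\}$ (resp. $\fR^3\setminus\{\b0\}$) are initiated at arbitrarily small values of $\ell$, hence at arbitrarily small real times, which gives $\bP_\b0(\sigma_{\fR_+\setminus\{\b0\}}=0)=1$ and $\bP_\b0(\sigma_{\fR^3\setminus\{\b0\}}=0)=1$.

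The delicate point is the passage from the building-block processes to $M$: one must justify that $n^+$ and $n^3$ really are the suitably normalised excursion laws of $M^+$ and $M^3$ and, crucially, that $n^{\pm}\neq 0$ translates into \emph{immediate} rather than merely eventual entry into the corresponding part of $E$. Both facts rest on the equivalence of part processes in Definition~\ref{definition-dBMVD} and on the instantaneous reflection of $X^+$ and $X^3$ at the origin; once these are in place the three regularity statements come out together.
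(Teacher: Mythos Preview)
Your approach via It\^o excursion theory is plausible and can be made to work, but it is substantially heavier than the paper's argument, and the ``delicate point'' you flag is real: identifying $n^+$ and $n^3$ with (constant multiples of) the excursion measures of $M^+$ and $M^3$ at $\b0$ requires unpacking the entrance-law construction of the one-point reflection in \cite[\S7.5]{CF} and checking that both pieces receive nonzero mass. This is doable but is not a one-line consequence of Definition~\ref{definition-dBMVD}.

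The paper bypasses excursion theory entirely with two short potential-theoretic observations. For regularity of $\b0$ for itself: if $\bP_\b0(\sigma_{\b0}=0)=0$ then $\{\b0\}$ is thin, hence semipolar, hence $\fm$-polar, contradicting $\text{Cap}(\{\b0\})>0$ from Proposition~\ref{PRO1}; the zero--one law finishes. For $B=\fR_+\setminus\{\b0\}$ or $\fR^3\setminus\{\b0\}$: since $B$ is open it is finely open, so $B\subset B^r$; if $\b0\notin B^r$ then $B=B^r$ is finely open \emph{and} finely closed, hence invariant by \cite[Corollary~4.6.3]{FOT}, which forces $\fm(B)=0$ by irreducibility of $(\sE,\sF)$ --- a contradiction. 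Thus the paper trades your probabilistic excursion decomposition for a purely topological one-line argument: irreducibility alone rules out $\b0$ being a fine boundary point of either piece. Your route has the merit of yielding extra structural information about how $M$ leaves $\b0$, but for the bare regularity statement the fine-topology/irreducibility argument is both shorter and avoids the normalisation issue you would otherwise have to resolve.
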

\begin{proof}
If $\IP_\b0(\sigma_{\b0}=0)=0$, then $\{\b0\}$ is a thin set. Thin set is always semipolar and thus $\fm$-polar. This contradicts to Proposition~\ref{PRO1}. Hence $\IP_\b0(\sigma_{\b0}=0)=1$ by the $0$-$1$ law.

Let $B:=\fR_+\setminus \{\b0\}$ or $\fR^3\setminus \{\b0\}$. The set of all regular points for $B$ is denoted by $B^r$. Since $B$ is open, it is also finely open. Thus $B\subset B^r\subset \fR_+$ or $\fR^3$. If $\b0\notin B^r$,  then $B$ would be finely open and finely closed simultaneously. By \cite[Corollary~4.6.3]{FOT}, we would have $B$ is invariant. Hence the irreducibility of $(\sE,\sF)$ would imply $\fm(B)=0$. This is impossible. Eventually we can conclude that $\b0\in B^r$, i.e. $\b0$ is regular for $\fR_+\setminus \{\b0\}$ or $\fR^3\setminus \{\b0\}$. This completes the proof. 
\qed \end{proof}

\begin{remark}
At a heuristic level, this fact tells us that starting from $\b0$, $M$ enters $\fR_+\setminus \{\b0\}$, $\{\b0\}$, and $\fR^3\setminus \{\b0\}$ immediately. The behaviour of $M$ near $\b0$ is very similar to that of the excursions of one-dimensional Brownian motion. 
\end{remark}

Denote the extended Dirichlet spaces of $(\sE^+,\sF^+)$ and $(\sE^3,\sF^3)$ by $\sF^+_\re$ and $\sF^3_\re$ respectively. Note that $\sF^+_\re=\{f:f\circ \iota_+\in \cF^+_\re\}$ where  
\[
\cF^+_\re=\cF^+_\rho:=\{g: g\text{ is absolutely continuous on }\bR_+, \; \int_{\bR_+}g'(r)^2m_+(dr)<\infty\}
\]
when $X^+$ is recurrent, i.e. $1/\rho\notin L^1(\bR_+)$, and 
\[
\cF^+_\re=\{g\in \cF^+_\rho: \lim_{r\uparrow \infty} g(r)=0\}
\]
when $X^+$ is transient, i.e. $1/\rho\in L^1(\bR_+)$; see e.g. \cite[Theorem~2.2.11]{CF}. The expression of $\sF^3_\re$ is stated in \cite[Corollary~3.5]{FL17}. 
The extended Dirichlet space $\sF_\re$ of $(\sE,\sF)$ is given as follows. 

\begin{proposition}\label{PRO2}
It holds
\[
	\sF_\mathrm{e}=\left\{ f :f<\infty,\fm\text{-a.e.}, f|_{\fR^+}\in \sF^+_\mathrm{e}, f|_{\fR^3}\in \sF^3_\mathrm{e}, \widetilde{f|_{\fR_+}}(\b0)=\widetilde{f|_{\fR^3}}(\b0) \right\}.
\]
\end{proposition}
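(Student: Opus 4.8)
The plan is to argue by mutual inclusion, exploiting the additivity $\sE(f,g)=\tp\cdot\sE^+(f|_{\fR_+},g|_{\fR_+})+\sE^3(f|_{\fR^3},g|_{\fR^3})$ together with the fact, recorded in Proposition~\ref{PRO1}, that $\b0$ is of positive capacity and hence does not belong to any exceptional set for $\sE$, $\sE^+$ or $\sE^3$. Recall that $f\in\sF_\re$ means $|f|<\infty$ $\fm$-a.e.\ and that there is an $\sE$-Cauchy sequence $\{f_n\}\subset\sF$ with $f_n\to f$ $\fm$-a.e., the form $\sE$ extending to $\sF_\re$ by continuity. Throughout, $a.e.$ convergence with respect to $\fm$ restricted to $\fR_+$ (resp.\ $\fR^3$) coincides with $\fm_+$-a.e.\ (resp.\ $\fm_3$-a.e.)\ convergence, since $\fm|_{\fR_+}=\tp\cdot\fm_+$ and $\fm|_{\fR^3}=\fm_3$.

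For the inclusion $\subseteq$, I would start from such an approximating sequence $\{f_n\}$. The additivity of $\sE$ forces $\{f_n|_{\fR_+}\}$ to be $\sE^+$-Cauchy and $\{f_n|_{\fR^3}\}$ to be $\sE^3$-Cauchy, while $f_n|_{\fR_+}\to f|_{\fR_+}$ $\fm_+$-a.e.\ and $f_n|_{\fR^3}\to f|_{\fR^3}$ $\fm_3$-a.e.; hence $f|_{\fR_+}\in\sF^+_\re$ and $f|_{\fR^3}\in\sF^3_\re$ by the very definition of the extended spaces, and in particular $|f|<\infty$ $\fm$-a.e. The only nontrivial point is the matching at $\b0$. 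Each $f_n\in\sF$ satisfies $\widetilde{f_n|_{\fR_+}}(\b0)=\widetilde{f_n|_{\fR^3}}(\b0)$. Passing to a subsequence, the $\sE^+$-quasi-continuous versions converge $\sE^+$-q.e.\ and the $\sE^3$-quasi-continuous versions converge $\sE^3$-q.e.\ to the quasi-continuous versions of the respective limits; since $\b0$ carries positive $\sE^+$- and $\sE^3$-capacity (Lemma~\ref{LM3} and Theorem~\ref{THM3}), it lies outside every exceptional set, so one may evaluate both sides at $\b0$ and pass to the limit to obtain $\widetilde{f|_{\fR_+}}(\b0)=\widetilde{f|_{\fR^3}}(\b0)$.

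For the reverse inclusion $\supseteq$, given $f$ in the right-hand side, I would pick $\sE^+$-Cauchy $g_n\in\sF^+$ with $g_n\to f|_{\fR_+}$ $\fm_+$-a.e.\ and $\sE^3$-Cauchy $h_n\in\sF^3$ with $h_n\to f|_{\fR^3}$ $\fm_3$-a.e., and then repair the boundary mismatch exactly as in the regularity argument of Theorem~\ref{THM2}. Fix bump functions ${}^+\phi\in\sC^+$ and ${}^3\phi\in\sC^3$ with ${}^+\phi(\b0)={}^3\phi(\b0)=1$, set $\delta_n:=\widetilde{h_n}(\b0)-\widetilde{g_n}(\b0)$, and define $f_n$ by $f_n|_{\fR_+}:=g_n+\delta_n\cdot{}^+\phi$ and $f_n|_{\fR^3}:=h_n$. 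By construction $\widetilde{f_n|_{\fR_+}}(\b0)=\widetilde{h_n}(\b0)=\widetilde{f_n|_{\fR^3}}(\b0)$, so $f_n\in\sF$. The same q.e.-convergence argument as above (again using that $\b0$ is non-exceptional) shows $\widetilde{g_n}(\b0)\to\widetilde{f|_{\fR_+}}(\b0)$ and $\widetilde{h_n}(\b0)\to\widetilde{f|_{\fR^3}}(\b0)$, which coincide by hypothesis, whence $\delta_n\to0$. Consequently $\delta_n\cdot{}^+\phi\to0$ in $\sE^+$, so $\{f_n\}$ is $\sE$-Cauchy and $f_n\to f$ $\fm$-a.e., giving $f\in\sF_\re$.

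The step I expect to be the main obstacle is the control of the point value at $\b0$ in the extended-space setting. Unlike in the closeness proof of Theorem~\ref{THM2}, the approximating sequences here are only $\sE$-Cauchy and carry no $L^2$-bound, so the $\sE_1$-boundedness of the evaluation $g\mapsto\widetilde{g}(\b0)$ cannot be invoked directly. The remedy is to justify the q.e.-convergence of a subsequence of quasi-continuous versions in $\sF^+_\re$ and $\sF^3_\re$; this is standard in the transient case and, in the recurrent case $\gamma\geq0$, requires either localizing near $\b0$ by a cutoff (reducing to the transient part form on a neighbourhood of $\b0$) or appealing to the corresponding subsequence theorem for recurrent extended Dirichlet spaces. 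Once $\b0$ is confirmed non-exceptional via Proposition~\ref{PRO1}, evaluating at $\b0$ and passing to the limit is legitimate, and the two inclusions close the argument.
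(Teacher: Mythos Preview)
Your proposal is correct and follows essentially the same mutual-inclusion strategy as the paper: restrict an $\sE$-approximating sequence to each component for $\subseteq$, and glue component-wise approximations for $\supseteq$, in both directions using positive capacity of $\b0$ to evaluate quasi-continuous versions there. The only cosmetic difference is in the $\supseteq$ step: you correct the mismatch additively via $\delta_n\cdot{}^+\phi$, whereas the paper rescales multiplicatively by the ratio ${}^3f_{n_k}(\b0)/{}^+f_{n_k}(\b0)$ after a case split on whether these values vanish---your additive version is slightly cleaner and sidesteps a delicate subcase when $f(\b0)=0$. The q.e.-convergence issue you flag is handled in the paper simply by invoking \cite[Theorem~2.1.7]{FOT}, which already furnishes approximating sequences from the core converging $\sE$-q.e.\ (hence pointwise on $\fR_+$ and $\sE^3$-q.e.\ on $\fR^3$ by Proposition~\ref{PRO1}), so no separate recurrent/transient argument is needed.
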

\begin{proof}
Denote the family on the right hand side by $\sG$. Take an arbitrary function $f\in \sF_\re$. Let $\sC$ given by \eqref{EQ3CFF} be a special standard core of $(\sE,\sF)$. By \cite[Theorem~2.1.7]{FOT}, there exist $f_n\in \sC$ which are $\sE$-Cauchy and converge to $f$, $\sE$-q.e. as $n\rightarrow \infty$. Then it follows from Proposition~\ref{PRO1} that ${}^+f_n:=f_n|_{\fR_+}$ (resp. ${}^3f_n:=f_n|_{\fR_3}$) are $\sE^+$-Cauchy (resp. $\sE^3$-Cauchy) and ${}^+f_n\rightarrow f|_{\fR^+}$, $\sE^+$-q.e. (resp. ${}^3f_n\rightarrow f|_{\fR^3}$, $\sE^3$-q.e.). Particularly, $f|_{\fR^+}\in \sF^+_\mathrm{e}, f|_{\fR^3}\in \sF^3_\mathrm{e}$ and $\widetilde{f|_{\fR_+}}(\b0)=\lim_{n\rightarrow \infty}{}^+f_n(\b0)=\lim_{n\rightarrow \infty}{}^3f_n(\b0)=\widetilde{f|_{\fR^3}}(\b0)$. This yields $\sF_\re\subset \sG$. 

To the contrary, take $f\in \sG$. Then ${}^+f:=f|_{\fR_+}$ admits an approximation sequence ${}^+f_n\in \sC^+$ with ${}^+f_n \rightarrow {}^+f$ pointwisely and ${}^3f:=f|_{\fR^3}$ admits an approximation sequence ${}^3f_n\in \sC^3$ with ${}^+f_n \rightarrow {}^3f$, $\sE^3$-q.e., where $\sC^+$ and $\sC^3$ are given in \eqref{EQ3CFF}. It follows that 
\[
	\lim_{n\rightarrow \infty}{}^+f_n(\b0)=\widetilde{{}^+f}(\b0)=\widetilde{{}^3f}(\b0)=\lim_{n\rightarrow \infty}{}^3f_n(\b0).
\]
In the case that a subsequence $\{n_k:k\geq 1\}$ of $\{n:n\geq 1\}$ exists such that ${}^+f_{n_k}(\b0)\neq 0$ for all $k$ (resp. ${}^3f_{n_k}(\b0)\neq 0$ for all $k$), the functions
\[
	f_k(x):=\left\lbrace 
	\begin{aligned}
	&\frac{{}^3f_{n_k}(\b0)}{{}^+f_{n_k}(\b0)}{}^+f_{n_k}(x),\quad x\in \fR_+, \\
	&{}^3f_{n_k}(x),\qquad\qquad\quad x\in \fR^3,
	\end{aligned} \right.\quad 
\left(\text{resp. }	f_k(x):=\left\lbrace 
	\begin{aligned}
	&{}^+f_{n_k}(x),\quad x\in \fR_+, \\
	&\frac{{}^+f_{n_k}(\b0)}{{}^3f_{n_k}(\b0)}{}^3f_{n_k}(x),\quad x\in \fR^3,
	\end{aligned} \right.\right)
\]
in $\sC$ constitute an approximation sequence of $f$. This leads to $f\in \sF_\re$. Otherwise we can assume without loss of generality that ${}^+f_n(\b0)={}^3f_n(\b0)=0$ for all $n$. Then $f_n:={}^+f_n$ on $\fR_+$ and $f_n:={}^3f_n$ on $\fR^3$ give an approximation sequence of $f$ and it follows $f\in \sF_\re$ as well. This completes the proof. 
\qed \end{proof}

The following corollary is a straightforward consequence of Proposition~\ref{PRO2}.  

\begin{corollary}
$(\sE,\sF)$ is recurrent if and only if both $(\sE^+,\sF^+)$ and $(\sE^3,\sF^3)$ are recurrent. 
Otherwise $(\sE,\sF)$ is transient.
\end{corollary}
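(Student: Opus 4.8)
The plan is to reduce recurrence to membership of the constant function $1$ in the extended Dirichlet space and then to read off the answer from Proposition~\ref{PRO2}. Recall the standard recurrence criterion (see e.g.\ \cite[Theorem~1.6.3]{FOT}): a Dirichlet form is recurrent if and only if $1$ belongs to its extended Dirichlet space and has vanishing energy. Since $(\sE,\sF)$ is strongly local by Theorem~\ref{THM2}, the energy measure of a constant vanishes, so $\sE(1,1)=0$ holds automatically whenever $1\in \sF_\re$; the same remark applies to the strongly local forms $(\sE^+,\sF^+)$ and $(\sE^3,\sF^3)$. Hence in all three cases recurrence is equivalent to the single condition that $1$ lies in the relevant extended Dirichlet space.

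First I would apply Proposition~\ref{PRO2} to $f\equiv 1$ on $E$. Its restrictions are $1|_{\fR_+}\equiv 1$ and $1|_{\fR^3}\equiv 1$, the integrability requirement $1<\infty$ $\fm$-a.e.\ is trivial, and the matching condition $\widetilde{1|_{\fR_+}}(\b0)=\widetilde{1|_{\fR^3}}(\b0)$ holds because both quasi-continuous versions equal $1$ at $\b0$. Consequently $1\in \sF_\re$ if and only if $1\in \sF^+_\re$ and $1\in \sF^3_\re$ simultaneously.

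Combining the two observations yields the equivalence: $(\sE,\sF)$ is recurrent if and only if $1\in \sF_\re$, which holds if and only if $1\in \sF^+_\re$ and $1\in \sF^3_\re$, i.e.\ if and only if both $(\sE^+,\sF^+)$ and $(\sE^3,\sF^3)$ are recurrent. Finally, since $(\sE,\sF)$ is irreducible by Theorem~\ref{THM2}, the dichotomy for irreducible Dirichlet forms forces it to be transient whenever it is not recurrent, which gives the last assertion of the corollary.

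The only point requiring care --- and it is bookkeeping rather than a genuine obstacle --- is the interplay between the abstract recurrence criterion and the explicit gluing description of $\sF_\re$: one must check that the compatibility condition at $\b0$ imposes no constraint on the constant function, so that $1\in \sF_\re$ decouples cleanly into the two component conditions $1\in\sF^+_\re$ and $1\in\sF^3_\re$. Once this is noted the corollary follows immediately, consistent with it being advertised as a straightforward consequence of Proposition~\ref{PRO2}.
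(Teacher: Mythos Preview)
Your proof is correct and follows essentially the same approach as the paper's: both invoke the standard criterion that recurrence is equivalent to $1\in\sF_\re$ with $\sE(1,1)=0$, and then appeal to the explicit description of $\sF_\re$ in Proposition~\ref{PRO2}. You simply spell out in more detail the steps the paper leaves implicit (the compatibility check at $\b0$, the reason $\sE(1,1)=0$ is automatic, and the irreducibility dichotomy for the transient case).
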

\begin{proof}
It suffices to note that $(\sE,\sF)$ is recurrent, if and only if $1\in \sF_\re$ and $\sE(1,1)=0$.
\qed \end{proof}


\subsection{Generator of $(\sE,\sF)$}

The generator of $(\cE^+,\cF^+)$ is $\mathcal{A}^+:=\frac{1}{2}\frac{d^2}{d m_+ d \ts_+}$ with the domain  (see e.g. \cite{F14})
\[
 \cD(\cA^+):=\left\{f\in \cF^+: \frac{df}{d\ts_+}\ll m_+, \frac{d^2f}{dm_+ d\ts_+}\in L^2(\bR_+,m_+)  \right\}.
\]
Particularly, $C_c^\infty(\bR_+)\subset \cD(\cA^+)$. 
Then the generator of $(\sE^+,\sF^+)$ is $\sA^+:=\iota^*_+\cA^+$, where $(\iota^*_+ \cA^+) f:=\cA^+(f\circ \iota_+)$ for $f\in \cD(\sA^+):=\{g: g\circ \iota_+\in \cD(\cA^+)\}$. On the other hand, the generator $\cA^3:=\cA_\gamma$ of $(\cE^3,\cF^3)$ is given by \eqref{generator-3d-dBM}. Analogically the generator of $(\sE^3,\sF^3)$ is $\sA^3:=\iota^*_3 \cA^3$, where  $(\iota^*_3 \cA^3) f:=\cA^3(f\circ \iota_3)$ for $f\in \cD(\sA^3):=\{g: g\circ \iota_3\in \cD(\cA^+)\}$.

Denote the generator of $(\sE,\sF)$ by $\sA$ with the domain $\cD(\sA)$. Set $C_c^\infty(E):=\sC$ defined by \eqref{EQ3CFF} with $\sC^+:=C_c^\infty(\bR_+)\circ \iota_+^{-1}$ and $\sC^3:=C_c^\infty(\bR^3)\circ \iota_3^{-1}$. It is straightforward to verify that $C_c^\infty(E)\subset \cD(\sA)$ and for all $f\in C_c^\infty(E)$, 
\[
	\sA f|_{\fR_+}=\sA^+(f|_{\fR_+}),\quad \sA f|_{\fR^3}=\sA^3(f|_{\fR^3}).
\]
Define another operator on $L^2(E,\fm)$ as follows:
\[
\begin{aligned}
	&\cD(\sG):=\left\{f\in \sF: f|_{\fR_+}\in \cD(\sA^+), f|_{\fR^3}\in \cD(\sA^3) \right\}, \\
&\sG f|_{\fR_+}:=\sA^+(f|_{\fR_+}),\quad \sG f|_{\fR^3}:=\sA^3(f|_{\fR^3}),\quad \forall f\in \cD(\sG).  	
\end{aligned}\]
Clearly $C_c^\infty(E)\subset \cD(\sG)$ and $\sG|_{C_c^\infty(E)}=\sA|_{C_c^\infty(E)}$. Note that $\sG$ is not self-adjoint on $L^2(E,\fm)$, since $\cD(\sG)\subsetneqq \{f\in L^2(E,\fm): f|_{\fR_+}\in \cD(\sA^+),f|_{\fR^3}\in \cD(\sA^3)\}\subset \cD(\sG^*)$ where $\sG^*$ is the adjoint operator of $\sG$. Particularly, $\sA\neq \sG$. Furthermore, we have the following.

\begin{proposition}\label{prop3}
The following hold for $\sA$ and $\sG$:
\begin{itemize}
\item[(1)] $\sA$ is a self-adjoint extension of $\sG$ on $L^2(E,\fm)$.
\item[(2)] When $\rho+1/\rho\in L^1(\bR_+)$ and $\gamma>0$, $f\in \cD(\sG)$ if and only if $f\in \cD(\sA)$ and $\fm_+(\sA f|_{\fR_+})=\fm_3(\sA f|_{\fR^3})=0$. 
\end{itemize}
\end{proposition}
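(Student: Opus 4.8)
The overall plan is to exploit that $\sA$, being the $L^2(E,\fm)$-generator of the Dirichlet form $(\sE,\sF)$, is automatically self-adjoint, so that for (1) it suffices to prove the operator inclusion $\sG\subset\sA$. For (2), the implication $f\in\cD(\sG)\Rightarrow f\in\cD(\sA)$ together with the two vanishing integrals will fall out of (1), while the converse is the genuine content; both directions hinge on the observation that, under $\rho+1/\rho\in L^1(\bR_+)$ and $\gamma>0$, the measures $\fm_+$ and $\fm_3$ are finite, so that the constant function $1$ belongs to both $\sF^+$ and $\sF^3$.

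For (1) I would fix $f\in\cD(\sG)$, set $h:=\sG f$ (so $h|_{\fR_+}=\sA^+(f|_{\fR_+})$ and $h|_{\fR^3}=\sA^3(f|_{\fR^3})$), and verify the defining identity $\sE(f,g)=(-h,g)_{\fm}$ for every $g\in\sF$. Writing $\sE(f,g)=\tp\,\sE^+(f|_{\fR_+},g|_{\fR_+})+\sE^3(f|_{\fR^3},g|_{\fR^3})$ and applying the defining relation of the component generators, namely $\sE^+(f|_{\fR_+},v)=(-\sA^+(f|_{\fR_+}),v)_{\fm_+}$ for all $v\in\sF^+$ and its $\fR^3$-analogue, the two halves recombine into $(-h,g)_{\fm}=(-\sG f,g)_{\fm}$ once one uses $\fm|_{\fR_+}=\tp\,\fm_+$ and $\fm|_{\fR^3}=\fm_3$. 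I want to stress that no boundary term at $\b0$ appears here: the flux-free (reflecting-type) condition at $\b0$ is already built into membership in $\cD(\sA^+)$ and $\cD(\sA^3)$, which is precisely why $\cD(\sG)$ consists of the flux-free functions and sits strictly inside $\cD(\sA)$; this is the reason $\sG$ itself is not self-adjoint although $\sA$ extends it.

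For (2), the forward implication is immediate from (1): if $f\in\cD(\sG)$ then $f\in\cD(\sA)$ with $\sA f=\sG f$, and testing against $1\in\sF^+$ (resp. $1\in\sF^3$) gives $\fm_+(\sA f|_{\fR_+})=(\sA^+(f|_{\fR_+}),1)_{\fm_+}=-\sE^+(f|_{\fR_+},1)=0$ since $\nabla 1=0$, and likewise $\fm_3(\sA f|_{\fR^3})=0$. For the converse I start from $f\in\cD(\sA)$ with $\fm_+(\sA f|_{\fR_+})=\fm_3(\sA f|_{\fR^3})=0$ and aim to show $f|_{\fR_+}\in\cD(\sA^+)$ and $f|_{\fR^3}\in\cD(\sA^3)$; it is enough to produce, for the $\fR_+$-part, the identity $\sE^+(f|_{\fR_+},v)=(-\sA f|_{\fR_+},v)_{\fm_+}$ for every $v\in\sF^+$ (and symmetrically on $\fR^3$), since this exhibits $v\mapsto\sE^+(f|_{\fR_+},v)$ as $L^2(\fm_+)$-continuous and hence places $f|_{\fR_+}$ in $\cD(\sA^+)$ with $\sA^+(f|_{\fR_+})=\sA f|_{\fR_+}$. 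The plan is to split $v=\widetilde v(\b0)\cdot 1+(v-\widetilde v(\b0))$, where $\widetilde v(\b0)$ is the quasi-continuous value at $\b0$. For the second summand, which vanishes at $\b0$, I would test $\sE(f,g)=(-\sA f,g)_{\fm}$ against the one-sided extension $g:=(v-\widetilde v(\b0),0)\in\sF$, admissible precisely because its trace at $\b0$ matches the value $0$ on $\fR^3$; the $\fR^3$-contributions drop out and, after cancelling $\tp$, the desired identity emerges. For the constant summand it suffices to treat $v=1$, where both sides vanish: $\sE^+(f|_{\fR_+},1)=0$ and $(-\sA f|_{\fR_+},1)_{\fm_+}=-\fm_+(\sA f|_{\fR_+})=0$ by hypothesis. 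Adding the two pieces gives the identity for all $v\in\sF^+$, and exchanging the roles of the two components yields $f|_{\fR^3}\in\cD(\sA^3)$, so $f\in\cD(\sG)$.

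I expect the main obstacle to be exactly the handling of test functions that do not vanish at $\b0$: because the two parts are coupled through the matching condition at $\b0$, one cannot freely restrict a test function to a single component while staying in $\sF$, and the naive one-sided extension only legitimately handles the subspace $\{\widetilde v(\b0)=0\}$. The device that closes the gap is to isolate the remaining one-dimensional ``constant direction'', where the hypotheses enter decisively: finiteness of $\fm_+$ and $\fm_3$ (from $\rho\in L^1(\bR_+)$ and $\gamma>0$) is what places $1$ in the form domains and makes $\sE^+(\cdot,1)$ and $\sE^3(\cdot,1)$ vanish, while the two integral conditions $\fm_+(\sA f|_{\fR_+})=\fm_3(\sA f|_{\fR^3})=0$ are exactly what force the constant-direction identity. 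A secondary point to check carefully is the admissibility of each auxiliary $g$ as an element of $\sF$ (trace-matching at $\b0$) and that $\widetilde v(\b0)$ is well defined, which is guaranteed by $\{\b0\}$ having positive capacity.
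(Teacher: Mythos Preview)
Your proposal is correct and follows essentially the same approach as the paper: part (1) is proved identically, and for the converse in (2) both you and the paper exploit that $1\in\sF^+\cap\sF^3$ to handle test functions not vanishing at $\b0$. The only cosmetic difference is that you split $v=\widetilde v(\b0)\cdot 1+(v-\widetilde v(\b0))$ and extend the second piece by zero, whereas the paper extends $v$ in one stroke by the constant $\widetilde v(\b0)$ on the other component and uses $\fm_3(\sA f|_{\fR^3})=0$ (resp.\ $\fm_+(\sA f|_{\fR_+})=0$) to kill the cross term; the two maneuvers are equivalent.
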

\begin{proof}
\begin{itemize}
\item[(1)] Take $f\in \cD(\sG)$. Then $f\in \sF$ and for any $g\in \sF$, 
\[
\begin{aligned}
	\sE(f,g)&=\tp\cdot\sE^+(f|_{\fR_+},g|_{\fR_+})+\sE^3(f|_{\fR^3},g|_{\fR^3})\\
	 &=(-\tp\cdot \sA^+(f|_{\fR_+}),g|_{\fR_+})_{L^2(\fR_+,\fm_+)}+(-\sA^3(f|_{\fR^3}),g|_{\fR^3})_{L^2(\fR^3,\fm_3)} \\
	 &=(-\sG f, g)_{L^2(E,\fm)},
\end{aligned}\]
where the second equality is due to $f|_{\fR_+}\in \cD(\sA^+)$ and $f|_{\fR^3}\in \cD(\sA^3)$. Hence we can conclude that $f\in \cD(\sA)$ and $\sA f=\sG f$. 
\item[(2)] When $\rho+1/\rho\in L^1(\bR_+)$ and $\gamma>0$, the constant functions belong to both $\sF^+$ and $\sF^3$. Take $f\in \cD(\sG)$. It follows from the first assertion that $\sA f=\sG f$, which yields $\sA f|_{\fR_+}=\sG f|_{\fR_+}=\sA^+(f|_{\fR_+})$. Thus $\fm_+(\sA f|_{\fR_+})=\fm_+\left(\sA^+(f|_{\fR_+})\right)=-\sE^+(f|_{\fR_+},1)=0$. Analogically we can obtain $\fm_3(\sA f|_{\fR^3})=0$. To the contrary, let $f\in \cD(\sA)$ such that $\fm_+(\sA f|_{\fR_+})=\fm_3(\sA f|_{\fR^3})=0$. Take arbitrary $g^+\in \sF^+$ and define a function $g$ on $E$ by letting $g|_{\fR_+}:=g^+$ and $g|_{\fR^3}\equiv g^+(\b0)$. 
Clearly $g\in \sF$ and it follows that
\[
\begin{aligned}
	\sE(f,g)&=(-\sA f, g)_{L^2(E,\fm)}\\
	&= \tp\cdot (-\sA f|_{\fR_+}, g^+)_{L^2(\fR_+,\fm_+)}+g^+(\b0)\cdot \fm_3(\sA f|_{\fR^3})\\
	&=\tp \cdot (-\sA f|_{\fR_+}, g^+)_{L^2(\fR_+,\fm_+)}. 
\end{aligned}\]
On the other hand, $\sE(f,g)=\tp\cdot\sE^+(f|_{\fR_+},g^+)+g^+(\b0)\cdot \sE^3(f|_{\fR^3},1)=\tp\cdot\sE^+(f|_{\fR_+},g^+)$. These yield for all $g^+\in \sF^+$,
\[
\sE^+(f|_{\fR_+},g^+)=\left(-\sA f|_{\fR_+}, g^+\right)_{L^2(\fR_+,\fm_+)}.
\]
Consequently, $f|_{\fR_+}\in \cD(\sA^+)$ and $\sA^+(f|_{\fR_+})= \sA f|_{\fR_+}$. Analogically we can obtain that $f|_{\fR^3}\in \cD(\sA^3)$ and $\sA^3(f|_{\fR^3})=\sA f|_{\fR^3}$. Eventually $f\in \cD(\sG)$. 
\end{itemize}This completes the proof. 
\qed \end{proof}

\begin{remark}
In the second assertion of Proposition~\ref{prop3}, $\fm_+(\sA f|_{\fR_+})=0$ or $\fm_3(\sA f|_{\fR^3})=0$ can be erased since for $f\in \cD(\sA)$, $\fm(\sA f)=-\sE(f,1)=0$ leads to $\tp\cdot \fm_+(\sA f|_{\fR_+})+\fm_3(\sA f|_{\fR^3})=0$.
\end{remark}

Let $L^2(E)$ be the $L^2$-space on $E$ endowed with the Lebesgue measures on $\fR_+$ and $\fR^3$ respectively. The notation $\mathbf{\Delta}$ denotes the Laplacian operator acting on $C_c^\infty(E\setminus \{\b0\})$, i.e. for any $f\in C_c^\infty(E\setminus \{\b0\})$ and $x=(x_1,x_2,x_3,r)\in E\setminus \{\b0\}$, $\mathbf{\Delta}f(x):=\sum_{i=1}^3\frac{\partial^2 f}{\partial x_i^2}(x)$ for $x\in \fR^3\setminus \{\b0\}$ and $\mathbf{\Delta}f(x):=\frac{\partial^2 f}{\partial r^2}(x)$ for $x\in \fR_+\setminus \{\b0\}$. Recall that $h_{\rho,\gamma}$ is defined by \eqref{EQ3HAG}. The following result is obvious by Lemma~\ref{LM1} and Example~\ref{EXA1}. 

\begin{proposition}
Consider $\tp=1$.
Take a constant $\alpha\in \bR$ and $\rho(r):=\re^{-2\alpha r}/\pi$. Set $h_{\alpha, \gamma}:=h_{\rho,\gamma}$. Then the following operator
\[
\begin{aligned}
&\cD(\sL_{\alpha, \gamma}):=\{f\in L^2(E): f/h_{\alpha,\gamma}\in \cD(\sA)\},\\
&\sL_{\alpha,\gamma}f:=h_{\alpha,\gamma}\cdot\sA\left(\frac{f}{h_{\alpha,\gamma}}\right)+\left(\frac{\alpha^2}{2}\cdot f|_{\fR_+}+\frac{\gamma^2}{2}f|_{\fR^3} \right),\quad f\in \cD(\sL_{\alpha,\gamma}),
\end{aligned}
\]
is a self-adjoint extension of $\mathbf{\Delta}$ (acting on $C_c^\infty(E\setminus \{\b0\})$) on $L^2(E)$. Furthermore, if $(\alpha_1,\gamma_1)\neq (\alpha_2,\gamma_2)$, then $\sL_{\alpha_1,\gamma_1}\neq \sL_{\alpha_2,\gamma_2}$. 
\end{proposition}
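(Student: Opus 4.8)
The plan is to recognise $\sL_{\alpha,\gamma}$ as a unitarily transformed copy of the self-adjoint generator $\sA$ plus a bounded potential, which makes self-adjointness automatic, and then to read off the defining behaviour at the vertex $\b0$ in order to separate the parameters.

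\emph{Self-adjointness.} Since $\tp=1$ forces $\fm(dx)=h_{\alpha,\gamma}(x)^2\mathtt{l}(dx)$, the multiplication map $Ug:=h_{\alpha,\gamma}\cdot g$ is a unitary operator from $L^2(E,\fm)$ onto $L^2(E)$, with inverse $U^{-1}f=f/h_{\alpha,\gamma}$. Writing $V$ for multiplication by the bounded function equal to $\alpha^2/2$ on $\fR_+$ and $\gamma^2/2$ on $\fR^3$, the definition reads $\sL_{\alpha,\gamma}=U\,\sA\,U^{-1}+V$ with domain $\cD(\sL_{\alpha,\gamma})=U\cD(\sA)=h_{\alpha,\gamma}\cdot\cD(\sA)$. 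As $\sA$ is self-adjoint on $L^2(E,\fm)$ (being the generator of the Dirichlet form $(\sE,\sF)$ of Theorem~\ref{THM2}), its unitary conjugate $U\sA U^{-1}$ is self-adjoint on $L^2(E)$, and adding the bounded self-adjoint $V$ leaves it self-adjoint on the same domain. This settles the first half with no further work.

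\emph{Extension of the Laplacian.} Fix $f\in C_c^\infty(E\setminus\{\b0\})$. Then $f/h_{\alpha,\gamma}\in C_c^\infty(E\setminus\{\b0\})\subset\cD(\sG)\subset\cD(\sA)$ by Proposition~\ref{prop3}, so $f\in\cD(\sL_{\alpha,\gamma})$. I would then compute $\sL_{\alpha,\gamma}f$ branch by branch. On $\fR^3\setminus\{\b0\}$, since $\cL_\gamma=\tfrac12\Delta$ on $C_c^\infty(\bR^3\setminus\{0\})$, the defining identity \eqref{generator-3d-dBM} gives $\psi_\gamma\,\cA_\gamma(f/\psi_\gamma)=\tfrac12\Delta f-\tfrac{\gamma^2}{2}f$, so adding $\tfrac{\gamma^2}{2}f$ recovers the free Laplacian; on $\fR_+\setminus\{\b0\}$ the same cancellation occurs between the drift produced by $\sA^+=\iota_+^*\cA^+$ for $\rho(r)=\re^{-2\alpha r}/\pi$ and the term $\tfrac{\alpha^2}{2}f$. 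In both cases the spectral shift created by the ground-state transform is exactly offset by $V$, so $\sL_{\alpha,\gamma}f=\mathbf\Delta f$ on the core (in the $\tfrac12$-normalization inherited from $\sE$). Equivalently, this is the assertion that $h_{\alpha,\gamma}$ is a generalized eigenfunction of $\tfrac12\mathbf\Delta$ off $\b0$ with eigenvalue $\gamma^2/2$ on $\fR^3$ and $\alpha^2/2$ on $\fR_+$, which I would verify directly from $\tfrac12\Delta\psi_\gamma=\tfrac{\gamma^2}{2}\psi_\gamma$ and $\tfrac12(\re^{-\alpha r})''=\tfrac{\alpha^2}{2}\re^{-\alpha r}$.

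\emph{Separating the parameters.} Suppose $\sL_{\alpha_1,\gamma_1}=\sL_{\alpha_2,\gamma_2}$; then $h_{\alpha_1,\gamma_1}\cD(\sA)=h_{\alpha_2,\gamma_2}\cD(\sA)$. I would extract the parameters from the asymptotics of domain elements at $\b0$. Pick $g\in\cD(\sA)$ with $\widetilde g(\b0)=1$ and set $f:=h_{\alpha_1,\gamma_1}g$. On $\fR^3$ the equation $\cA^3 g\in L^2$ transforms via \eqref{generator-3d-dBM} into a Poisson equation $\tfrac12\Delta f=F$ with $F\in L^2_{\loc}$ off $0$, so $f$ admits a Laurent-type expansion $f(x)=\tfrac{c_{-1}}{2\pi|x|}+c_0+o(1)$ near $0$; the explicit form $\psi_{\gamma_1}(x)=\tfrac{1}{2\pi|x|}-\tfrac{\gamma_1}{2\pi}+O(|x|)$ then forces the intrinsic ratio $c_0/c_{-1}=-\gamma_1$. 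But membership of the same $f$ in $\cD(\sL_{\alpha_2,\gamma_2})=h_{\alpha_2,\gamma_2}\cD(\sA)$ would, by the identical computation, force $c_0/c_{-1}=-\gamma_2$; since $c_{-1},c_0$ depend only on $f$, this gives $\gamma_1=\gamma_2$. A parallel argument on $\fR_+$, where the reflecting condition $g'(0+)=0$ pins domain elements to behave like $c_+(1-\alpha r)+o(r)$ and hence fixes the linear-to-constant ratio at $-\alpha$, gives $\alpha_1=\alpha_2$. Thus distinct parameters produce distinct domains, a fortiori distinct operators.

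\emph{Main obstacle.} The genuinely delicate point is the regularity input in the last step: one must show that elements of $\cD(\sA)$, and so of $\cD(\sL_{\alpha,\gamma})$, possess honest asymptotic expansions at $\b0$ with well-defined coefficients $c_{-1},c_0$ (respectively a genuine linear term on $\fR_+$), despite the singular weight $\psi_\gamma$ at the origin. I would handle this again through the ground-state transform, which turns the generator equation into a Poisson equation for $f=h_{\alpha,\gamma}g$ with $L^2_{\loc}$ right-hand side away from $0$, and then invoke the standard removable-singularity and elliptic-expansion theory for $\tfrac12\Delta$ in $\bR^3$ (together with one-dimensional Sturm--Liouville regularity on $\fR_+$) to produce the expansions and read off the parameter-dependent boundary ratios.
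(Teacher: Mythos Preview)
Your argument for self-adjointness and for $\sL_{\alpha,\gamma}\supset\mathbf{\Delta}|_{C_c^\infty(E\setminus\{\b0\})}$ is correct and is exactly what the paper has in mind: the paper simply declares the result ``obvious by Lemma~\ref{LM1} and Example~\ref{EXA1}'', and those two references encode precisely the ground-state ($h$-)transform you carried out on each component.

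Where you diverge from the paper is in the distinctness claim. The paper does not redo any asymptotic analysis at $\b0$; it relies on the already-stated fact (\S\ref{SEC22}, item (2)) that $\gamma\mapsto\cL_\gamma$ is an injective parametrization of the self-adjoint extensions of $\tfrac12\Delta|_{C_c^\infty(\bR^3\setminus\{0\})}$, together with the one-dimensional analogue recorded in Example~\ref{EXA1}. Since Lemma~\ref{LM1} identifies the $\fR^3$-piece of $\sL_{\alpha,\gamma}$ (after undoing the $h$-transform) with $\cL_\gamma$, different $\gamma$'s immediately give different operators, and likewise for $\alpha$. This buys the conclusion in one line by quoting the literature.

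Your direct route via asymptotic expansions is the mechanism behind that classification, so it is morally correct, but the execution has a real gap that you partly flag yourself. You invoke the reflecting condition $g'(0+)=0$ for $g\in\cD(\sA)$ on $\fR_+$, and a matching ``fixed linear term'' on $\fR^3$. These are properties of $\cD(\sG)$, not of $\cD(\sA)$: Proposition~\ref{prop3} says exactly that $\sA$ is a \emph{strict} extension of $\sG$, so a general $g\in\cD(\sA)$ need not satisfy $g|_{\fR_+}\in\cD(\sA^+)$ or $g|_{\fR^3}\in\cD(\sA^3)$. Concretely, when you write $f=h_{\alpha_1,\gamma_1}g$ with $g\in\cD(\sG)$ and then use the assumed equality of domains to get $f/h_{\alpha_2,\gamma_2}\in\cD(\sA)$, you lose the right to apply the Neumann condition or the $\cD(\cL_{\gamma_2})$ boundary asymptotics to this new quotient. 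Your proposed fix---pass to the Poisson equation $\tfrac12\Delta f=F$ with $F\in L^2_{\loc}$ and use removable-singularity/elliptic theory to extract $c_{-1},c_0$ intrinsically from $f$---is the right idea and can be made to work, but it is not a one-liner (you need $F\in L^2$ near $0$, not just $L^2_{\loc}$ away from $0$, to control the $o(1)$ term), and it amounts to reproving the point-interaction classification. The paper avoids all of this by citing it.
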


\subsection{Existence of transition density}

The following proposition states the existence of the transition density of $M$, which is the foundation of the study in \S\ref{Sec-short-time-HKE}. 

\begin{proposition}\label{PRO5}
The dBMVD $M$ satisfies the absolute continuity condition in the following sense: 
For any $x\in E$ and $t>0$, it holds $P_t(x,\cdot)\ll \fm$, where $\{P_t(x, \cdot)=\mathbb{P}_x(M_t\in \cdot): t\geq 0\}$ denotes the semigroup of $M$. Particularly there exists a density function $\{p(t,x,y): t>0, x,y\in E\}$ such that $P_t(x,dy)=p(t, x,y)\fm(dy)$.
\end{proposition}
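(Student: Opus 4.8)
The plan is to verify Fukushima's absolute continuity condition by producing a density for the part process $M^{\b0}$ on $E\setminus\{\b0\}$ out of the two building blocks, transferring absolute continuity to the full resolvent of $M$ by a symmetry argument that copes with the single point $\b0$, and finally passing from the resolvent to the transition function. For the densities of the building blocks: for $M^3$ this is recorded in Remark~\ref{RM1}, where the semigroup of $X^3$ admits the symmetric kernel $p^\gamma_t(x,y)$ with respect to $m_\gamma$, so $M^3$ has a density with respect to $\fm_3$; for $M^+$, Lemma~\ref{LM3} presents $X^+$ as a regular one-dimensional diffusion with non-atomic speed measure $m_+=\rho(r)\,dr$, and classical one-dimensional diffusion theory then furnishes a jointly continuous transition density with respect to $m_+$, so $M^+$ has a density with respect to $\fm_+$. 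Killing these at the hitting time of $\{\b0\}$ (Dynkin--Hunt), the part processes $M^{+,\b0}$ and $M^{3,\b0}$ retain densities, and their direct sum $M^{\b0}$ therefore has a transition density, hence a resolvent density $u^{\b0}_\alpha(x,y)$, with respect to $\fm$.

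Write $U_\alpha$ and $U^{\b0}_\alpha$ for the $\alpha$-resolvents of $M$ and $M^{\b0}$. Splitting the occupation integral at $\sigma_{\b0}$ and using the strong Markov property yields, for every $x\in E$,
\[
	U_\alpha f(x)=U^{\b0}_\alpha f(x)+\IE_x\!\left[\re^{-\alpha\sigma_{\b0}}\right]U_\alpha f(\b0).
\]
Let $B\subset E$ be $\fm$-null. Since the paths of $M^{\b0}$ avoid $\b0$ and $M^{\b0}$ has a density, $U^{\b0}_\alpha 1_B(x)=\int_{B\setminus\{\b0\}}u^{\b0}_\alpha(x,y)\,\fm(dy)=0$ for all $x$. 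On the other hand, $\fm$-symmetry of $U_\alpha$ gives $\int U_\alpha 1_B\cdot g\,d\fm=\int_B U_\alpha g\,d\fm=0$ for all $g\in L^2(E,\fm)$, so $U_\alpha 1_B=0$ $\fm$-a.e. Evaluating the displayed identity at some $x\neq\b0$ lying in this full-measure set and satisfying $\IE_x[\re^{-\alpha\sigma_{\b0}}]>0$ (available by \eqref{EQ3PXS}) forces $U_\alpha 1_B(\b0)=0$; feeding this back gives $U_\alpha 1_B\equiv 0$ on $E$. Hence $U_\alpha(x,\cdot)\ll\fm$ for every $x\in E$.

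Writing $U_\alpha(x,dy)=u_\alpha(x,y)\,\fm(dy)$, the iterates $U^n_\alpha(x,\cdot)$ remain absolutely continuous and $\alpha^nU^n_\alpha(x,\cdot)\to P_t(x,\cdot)$ (with $n/\alpha\to t$) preserving domination by $\fm$; equivalently, by standard results on the absolute continuity condition (see e.g.\ \cite[Theorem~4.2.4]{FOT}), resolvent absolute continuity upgrades to $P_t(x,\cdot)\ll\fm$ for all $t>0$ and $x\in E$, giving the claimed jointly measurable density $p(t,x,y)$.

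I expect the main obstacle to be the origin as a starting point: because $\{\b0\}$ has positive capacity, $M$ genuinely hits and leaves $\b0$, so it cannot be discarded. The resolution couples two facts — the vanishing $\fm$-mass of $\{\b0\}$ (which, through the density of the part process, annihilates $U^{\b0}_\alpha 1_B$ for every $\fm$-null $B$) and the $\fm$-symmetry of $U_\alpha$ (which pushes the $\fm$-a.e.\ identity $U_\alpha 1_B=0$ onto the single exceptional point $\b0$ via the hitting-time decomposition). Making this pinning rigorous, rather than the routine building-block and resolvent-to-semigroup steps, is the crux of the argument.
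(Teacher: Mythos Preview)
Your proof is correct, but you verify a different (equivalent) condition than the paper does. The paper invokes \cite[Theorem~4.2.4]{FOT} in the form ``every $\fm$-polar set is polar'': given a nearly Borel $\fm$-polar $B$, it observes via Proposition~\ref{PRO1} that $B\subset\fR^3\setminus\{\b0\}$, sets $\varphi(x)=\IE_x[\re^{-\sigma_B};\sigma_B<\infty]$, uses fine continuity to upgrade the $\fm$-a.e.\ vanishing of $\varphi$ to q.e.\ vanishing (hence $\varphi(\b0)=0$, since $\{\b0\}$ has positive capacity), and for $x\in\fR^3\setminus\{\b0\}$ splits $\varphi(x)$ at $\sigma_{\b0}$, the pre-$\b0$ term dying by the absolute continuity of $M^{3,\b0}$ and the post-$\b0$ term by $\varphi(\b0)=0$. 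You instead verify resolvent absolute continuity directly: you build a density for the part process $M^{\b0}$ from the two blocks, decompose $U_\alpha 1_B$ at $\sigma_{\b0}$, and pin the value at $\b0$ by combining $\fm$-symmetry (giving $U_\alpha 1_B=0$ $\fm$-a.e.) with \eqref{EQ3PXS}. Your route is slightly more elementary in that it avoids fine continuity and the localization of $B$ to $\fR^3\setminus\{\b0\}$; the paper's route, on the other hand, makes the role of Proposition~\ref{PRO1} (positivity of capacity at $\b0$) more transparent. Both hinge on the same two ingredients: densities for the building blocks, and the strong Markov decomposition at $\sigma_{\b0}$.
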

\begin{proof}
By \cite[Theorem~4.2.4]{FOT}, it suffices to show that any $\fm$-polar set is polar (with respect to $M$). Let $B$ be such a nearly Borel $\fm$-polar set and set
\[
	\varphi(x):=\mathbb{E}_x \left(\mathrm{e}^{-\sigma_B};\sigma_B<\infty\right),\quad \forall x\in E,
\]
where $\sigma_B:=\inf\{t>0:M_t\in B\}$. Proposition~\ref{PRO1} indicates $B\subset \fR^3\setminus \{\b0\}$ and the definition of $\fm$-polar set tells us $\varphi=0$, $\fm$-a.e. We need to show $\varphi(x)=0$ for every $x\in E$ to conclude that $B$ is polar. Clearly $\varphi$ is q.e. finely continuous (see e.g. \cite[Theorem~4.2.5]{FOT}) and hence $\varphi(x)=0$ for q.e $x\in E$ by applying \cite[Lemma~4.1.5]{FOT}. Particularly $\varphi(x)=0$ for all $x\in \fR_+$ due to Proposition~\ref{PRO1}. Let $M^{3,\b0}$ be the part process of $M$ on $\fR^3\setminus \{\b0\}$. Note that $M^3$ satisfies the absolute continuity condition as mentioned in Remark~\ref{RM1}, and thus so does $M^{3,\b0}$. Fix $x\in \fR^3\setminus \{\b0\}$. Then
\[
\varphi(x)=\mathbb{E}_x\left(\mathrm{e}^{-\sigma_B}; \sigma_B<\sigma_{\b0} \right)+\mathbb{E}_x\left(\mathrm{e}^{-\sigma_B}; \sigma_B\geq \sigma_{\b0} \right).
\] 
Since $M^{3,\b0}$ satisfies the absolute continuity condition, it follows that $B$ is polar with respect to $M^{3,\b0}$ and $\mathbb{E}_x\left(\mathrm{e}^{-\sigma_B}; \sigma_B<\sigma_{\b0} \right)=0$. On $\{\sigma_B\geq \sigma_{\b0}\}$, $\sigma_B=\sigma_B\circ \theta_{\sigma_{\b0}}+\sigma_{\b0}$. By denoting the filtration of $M$ by $(\mathcal{F}_t)_{t\geq 0}$ and using strong Markovian property, we have
\[
\begin{aligned}
\mathbb{E}_x\left(\mathrm{e}^{-\sigma_B}; \sigma_B\geq \sigma_{\b0}\right)&=\mathbb{E}_x\left(\mathrm{e}^{-\sigma_B\circ \theta_{\sigma_{\b0}}}\cdot \mathrm{e}^{-\sigma_{\b0}};\sigma_B\geq \sigma_{\b0}\right)\\
&\leq \mathbb{E}_x\left(\mathrm{e}^{-\sigma_B}\circ \theta_{\sigma_{\b0}};\sigma_B\geq \sigma_{\b0}\right) \\
&\leq \mathbb{E}_x\left(\mathbb{E}_x\left(\mathrm{e}^{-\sigma_B}\circ \theta_{\sigma_{\b0}}|\mathcal{F}_{\sigma_{\b0}}\right)\right)\\
&=\mathbb{E}_x\left(\mathbb{E}_{M_{\sigma_{\b0}}}\left(\mathrm{e}^{-\sigma_B}\right) \right)=\mathbb{E}_x\left(\mathbb{E}_{\b0}\left(\mathrm{e}^{-\sigma_B}\right) \right) \\
&=\mathbb{E}_x\left(\varphi(\b0)\right)=0.
\end{aligned}\]
Consequently $\varphi(x)=0$, which eventually yields that $B$ is polar with respect to $M$. This completes the proof. 
\qed \end{proof}

\section{Signed radial process of dBMVD}\label{SEC4}

Let $(\sE,\sF)$ be in Theorem~\ref{THM2} and $M$ be its associated dBMVD. This section is devoted to obtaining the expression of the signed radial process induced by $M$. Define a map $u:E\mapsto \bR$ as follows:
\[
	u(x):=\left\lbrace
		\begin{aligned}
		|x|,\quad &x=(x_1,x_2,x_3,0)\in \fR^3,\\
		-r,\quad &x=(0,0,0,r)\in \fR_+,
		\end{aligned} \right.
\]
and let $Y_t:=u(M_t)$ for any $t\geq 0$. Then $Y:=(Y_t)_{t\geq 0}$ is the so-called \emph{signed radial process} of $M$.  
Set $\ell:=\fm\circ u^{-1}$, which is a fully supported Radon measure on $\bR$. In practise, one can easily obtain 
\begin{equation}\label{EQ4LRG}
	\ell(dr)=\frac{\re^{-2\gamma |r|}}{\pi}dr|_{(0,\infty)} + \tp\cdot \rho(-r)dr|_{(-\infty, 0)}.
\end{equation}
The following proposition characterizes the signed radial process in terms of Dirichlet forms.

\begin{proposition}\label{PRO6}
$Y=(Y_t)_{t\geq 0}$ is an $\ell$-symmetric diffusion process on $\bR$. It is associated with the regular Dirichlet form on $L^2(\bR, \ell)$: 
\begin{equation}\label{EQ4FYF}
\begin{aligned}
	\sF^Y&=\{f\in L^2(\bR, \ell): f'\in L^2(\bR, \ell)\}, \\
	\sE^Y(f,g)&=\frac{1}{2}\int_\bR f'(x)g'(x)\ell(dx),\quad f,g\in \sF^Y,
\end{aligned}\end{equation}
where $f'$ stands for the weak derivative of $f$ for all $f\in \sF^Y$. 
\end{proposition}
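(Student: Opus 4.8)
The plan is to realize $(\sE^Y,\sF^Y)$ as the image of the dBMVD form $(\sE,\sF)$ under the map $u$, exploiting the rotational invariance of $M$. First I would record that $M$ is invariant under every rotation $T$ of $\fR^3$ (extended by the identity on $\fR_+$): this is inherited from the rotational invariance of $X^3$ used in Lemma~\ref{LM2}, and $u\circ T=u$. Since $\b0$ is a single point of $E$ with $u(\b0)=0$, a function on $E$ is rotationally invariant if and only if it is of the form $f\circ u$ for some $f$ on $\bR$, and the semigroup $P_t$ of $M$ maps rotationally invariant functions to rotationally invariant functions. Hence there is a Markovian semigroup $Q_t$ on $L^2(\bR,\ell)$ with $P_t(f\circ u)=(Q_t f)\circ u$, and since $M$ is an $\fm$-symmetric diffusion and $u$ is continuous, $Y=u(M)$ is the $\ell$-symmetric diffusion generated by $Q_t$. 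Its Dirichlet form is then the image form
\[
\sF^Y=\{f\in L^2(\bR,\ell): f\circ u\in \sF\},\qquad \sE^Y(f,g)=\sE(f\circ u,g\circ u).
\]

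Second, I would make this image description explicit and check it coincides with \eqref{EQ4FYF}. For $f$ on $\bR$ the restriction $(f\circ u)|_{\fR_+}$ is $r\mapsto f(-r)$ and $(f\circ u)|_{\fR^3}$ is $x\mapsto f(|x|)$. From $\nabla(f(|x|))=f'(|x|)\,x/|x|$ one gets $|\nabla(f\circ u)|^2(x)=f'(|x|)^2$ on $\fR^3$, so passing to spherical coordinates and using $\psi_\gamma(x)^2=\re^{-2\gamma|x|}/(4\pi^2|x|^2)$ together with $|S^2|=4\pi$ gives
\[
\sE^3\big((f\circ u)|_{\fR^3},(f\circ u)|_{\fR^3}\big)=\frac12\int_{\bR^3}f'(|x|)^2\,\psi_\gamma(x)^2dx=\frac12\int_0^\infty f'(r)^2\,\frac{\re^{-2\gamma r}}{\pi}\,dr.
\]
On the one-dimensional side, after the change of variable $s=-r$,
\[
\tp\cdot\sE^+\big((f\circ u)|_{\fR_+},(f\circ u)|_{\fR_+}\big)=\frac{\tp}{2}\int_0^\infty f'(-r)^2\rho(r)\,dr=\frac12\int_{-\infty}^0 f'(s)^2\,\tp\rho(-s)\,ds.
\]
Summing and comparing with \eqref{EQ4LRG} yields $\sE(f\circ u,f\circ u)=\frac12\int_\bR f'(x)^2\ell(dx)$, which is exactly $\sE^Y$.

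Third, I would identify the domain. The condition $f\circ u\in \sF$ reads: $(f\circ u)|_{\fR_+}\in\sF^+$, $(f\circ u)|_{\fR^3}\in\sF^3$, and the two quasi-continuous traces at $\b0$ agree. By the same computation the two membership conditions are equivalent to $f'\in L^2(\bR,\ell)$ on each half-line, and the one-dimensional Sobolev regularity makes such $f$ absolutely continuous there; the matching condition ${}^+\widetilde{f|_{\fR_+}}(\b0)={}^3\widetilde{f|_{\fR^3}}(\b0)$ becomes the continuity $f(0-)=f(0+)$ and is therefore automatic. This recovers $\sF^Y=\{f\in L^2(\bR,\ell):f'\in L^2(\bR,\ell)\}$. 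Regularity and strong locality of $(\sE^Y,\sF^Y)$ then follow from those of $(\sE,\sF)$ in Theorem~\ref{THM2} transported through $u$, or can be checked directly as in Lemma~\ref{LM3}.

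The main obstacle is the first step: rigorously justifying that $Y=u(M)$ is Markov and that its Dirichlet form is the image form, rather than merely computing the candidate form. This ``lumping'' of $M$ along the fibres of $u$ relies on rotational invariance together with careful bookkeeping of quasi-continuous versions and of the positive capacity of $\{0\}$ for the radial part (which, on the $\fR^3$ side, is precisely the content of Theorem~\ref{THM3}), so that the trace condition at the origin is meaningful and the one-point reflection structure of $M$ at $\b0$ descends to a one-point reflection of $Y$ at $0$. Alternatively, one may bypass the abstract image form by describing the part process of $Y$ on $\bR\setminus\{0\}$ directly --- the radial process $\varrho^0$ of Lemma~\ref{LM2} on $(0,\infty)$ and $-X^+$ killed at $0$ on $(-\infty,0)$ --- and then invoking \cite[Theorem~7.5.4]{CF} to identify $Y$ as their unique one-point reflection at $0$, mirroring the construction of $M$ in Theorem~\ref{THM2}.
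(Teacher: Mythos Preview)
Your proposal is correct and follows essentially the same route as the paper: establish the Markov property of $Y=u(M)$ via rotational invariance (the paper invokes \cite[Theorem~(13.5)]{S88} for this ``lumping'' step, which resolves exactly the obstacle you flag), identify $(\sE^Y,\sF^Y)$ as the image form $\sE^Y(f,f)=\sE(f\circ u,f\circ u)$, and then compute explicitly to obtain \eqref{EQ4FYF}. Your computation of the form and domain is actually more detailed than the paper's (which simply says ``a simple computation gives the expression''), and your alternative one-point-reflection route at the end is a legitimate variant the paper does not pursue.
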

\begin{proof}
To prove that $Y$ is a Markov process, we appeal to \cite[Theorem~(13.5)]{S88}. It suffices to show that for any bounded $f\in \mathcal{E}^u(\bR)$, there exists $g\in \cE^u(\bR)$ such that
\begin{equation}\label{EQ4PTF}
	P_t(f\circ u)=g\circ u,
\end{equation}	
where $\cE^u(\bR)$ is the family of all universally measurable functions on $\bR$ and $P_t$ is the semigroup of $M$. By the rotational invariance of $M^3$, it is not hard to find that for any $x, y \in E$ with $u(x)=u(y)=:r$, 
\[
	\int_E f(u(\cdot))\mathbb{P}_x(M_t\in \cdot)=\int_E f(u(\cdot)) \mathbb{P}_y(M_t\in \cdot). 
\]
This implies $P_t(f\circ u)(x)=P_t(f\circ u)(y)$. Set $g(r):=P_t(f\circ u)(x)$, which is a well-defined function on $\bR$ since $u$ is surjective. The universal measurability of $g$ is derived as follows. Since $u$ is continuous, it follows that $f\circ u\in \cE^u(E)$ and thus $P_t(f\circ u)\in \cE^u(E)$. For any set $A\in \cB(\bR)$, let $B_+:=g^{-1}(A)\cap (0,\infty)$ and $B_-:=g^{-1}(A)\cap (-\infty, 0]$. We have
\[
	\iota_3(B_+\times S^2)\cup \iota_+(-B_-)= (P_t(f\circ u))^{-1}(A)\in \cE^u(E),
\] 
where $S^2:=\{x\in \bR^3: |x|=1\}$. 
Hence $\iota_3(B_+\times S^2)= (P_t(f\circ u))^{-1}(A)\cap (\fR^3\setminus \{\b0\})\in \cE^u(E)$ and $\iota_+(-B_-)=(P_t(f\circ u))^{-1}(A)\cap \fR_+\in \cE^u(E)$. This leads to $B_+, B_-\in \cE^u(\bR)$ by the continuity of $\iota_+, \iota_3$. Therefore, $g^{-1}(A)=B_+\cup B_-\in \cE^u(\bR)$. 
By applying \cite[Theorem~(13.5)]{S88}, we can conclude that $Y$ is a Markov process and its transition semigroup is 
\[
	P^Y_tf:=g,
\]
where $f,g$ are in \eqref{EQ4PTF}. Moreover, for any two functions $f_1,f_2$, we have
\[
\begin{aligned}
	(P^Y_t& f_1,f_2)_{\ell}\\
	&=((P^Y_tf_1)\circ u, f_2\circ u)_\fm=(P_t(f_1\circ u), f_2\circ u)_\fm=(f_1\circ u, P_t(f_2\circ u))_\fm=(f_1, P^Y_t f_2)_{\ell}.
\end{aligned}
\]
This leads to the symmetry of $Y$.	By means of Yosida approximation, we can easily obtain that $Y$ is associated with the Dirichlet form on $L^2(\bR,\ell)$:
	\[
		\begin{aligned}
			\sF^Y&=\{f: f\circ u\in \sF\}, \\
		\sE^Y(f,f)&=\sE(f\circ u, f\circ u),\quad f\in \sF^Y. 
		\end{aligned}
	\]
A simple computation gives the expression \eqref{EQ4FYF} of $(\sE^Y, \sF^Y)$. The regularity of $(\sE^Y,\sF^Y)$ is also clear by virtue of \cite[Corollary~3.11]{LY172}. This completes the proof.
\qed \end{proof}
\begin{remark}\label{RM12}
By the expression of $(\sE^Y, \sF^Y)$, one can easily figure out (see e.g. \cite{LY172}) that $Y$ is an irreducible diffusion on $\bR$, whose speed measure is $\ell$ and scale function $\ts^Y$ is as follows: For $r\geq 0$, 
\[
\ts^Y(r)=\left\lbrace 
\begin{aligned}
\frac{\pi \re^{2\gamma r}-\pi}{2\gamma},\quad &\text{when }\gamma\neq 0, \\
\pi r,\quad &\text{when }\gamma=0;
\end{aligned}
\right.
\]
and for $r<0$,
\[
\ts^Y(r)=-\int_r^0 \frac{1}{\tp\cdot\rho(-s)}ds. 
\]
Not surprisingly, $Y$ is recurrent if and only if $1/\rho\notin L^1(\bR_+)$ and $\gamma\geq 0$. Otherwise $Y$ is transient. A straightforward computation yields that the infinities $\pm \infty$ are not approachable in finite time (cf. \eqref{EQ2LRT}) and hence $Y$ is conservative. This leads to the conservativeness of $M$. 

On the other hand, the symmetrizing measures of $Y$ are unique up to a constant in the sense that if a non-trivial measure $\mu$ is a symmetric measure of $Y$ then $\mu=c\cdot \ell$ for some constant $c$ (see e.g. \cite{YZ10}). This yields that for different $\tp$, $Y$ is different and therefore so is $M$. 
\end{remark}

From now on we impose the following condition on $\rho$:
\begin{description}
\item[(ACP)] $\rho$ is absolutely continuous and $\rho(r)>0$ for all $r\in \bR_+$. 
\end{description}
Note that \textbf{(ACP)} indicates \eqref{EQ3RRR}. 
Denote the family of probability measures of $Y$ by $\{\mathbb{P}^Y_r: r\in \mathbb{R}\}$.  
We next prove that $Y$ is a semimartingale with the quadratic variation process $\< Y\>_t=t$ and figure out the associated SDE for $Y$.
The symmetric semimartingale local time of $Y$ at $0$ is denoted by $L_t^0(Y)$,
that is,
 \begin{equation*}
L_t^0(Y):=\lim_{\varepsilon \downarrow 0}\frac{1}{2\varepsilon}\int_0^t 1_{(-\varepsilon, \varepsilon )}(Y_s)d\langle Y\rangle_s
= \lim_{\varepsilon \downarrow 0}\frac{1}{2\varepsilon}\int_0^t 1_{(-\varepsilon, \varepsilon )}(Y_s)ds,\quad \forall t\geq 0.
\end{equation*}
The main result of this section is as follows, and in its proof the celebrated Fukushima's decomposition is employed. 

\begin{theorem}\label{THM6}
Assume that \textbf{(ACP)} holds. The signed radial process $Y$ is a semimartingale whose quadratic variation process is $\<Y\>_t=t$. Furthermore for any $r\in \mathbb{R}$, $Y=(Y_t)_{t\geq 0}$ under the probability measure $\mathbb{P}^Y_r$ is the unique solution to the following well-posed SDE (that is,  this SDE has weak solutions and the pathwise uniqueness holds for it.):
\begin{equation}\label{EQ4DYD}
\begin{aligned}
	&dY_t=dB_t+b(Y_t)dt+\frac{1-\pi\tp\rho(0)}{1+\pi\tp\rho(0)}\cdot dL^0_t(Y), \\
	&Y_0=r,
	\end{aligned}
\end{equation}
where $B:=(B_t)_{t\geq 0}$ is a standard Brownian motion, $b$ is defined by
\begin{equation}\label{EQ3BRG}
b(r):=\left\lbrace 
\begin{aligned}
	-\gamma,\qquad & r\geq 0,\\
	\frac{-\rho'(-r)}{2\rho(-r)},\qquad &r< 0,
\end{aligned}
\right.
\end{equation}
and $L^0(Y)=(L^0_t(Y))_{t\geq 0}$ is the symmetric semimartingale local time of $Y$ at $0$.
\end{theorem}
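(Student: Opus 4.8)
The plan is to apply Fukushima's decomposition to the coordinate function $f(x)=x$, which lies in $\sF^Y_{\loc}$ since $f'\equiv 1\in L^2_{\loc}(\bR,\ell)$. Writing
\[
	f(Y_t)-f(Y_0)=M^{[f]}_t+N^{[f]}_t,\qquad t\geq 0,
\]
with $M^{[f]}$ a local MAF of finite energy and $N^{[f]}$ a CAF locally of zero energy, the first task is to identify $M^{[f]}$. Its energy measure is $\mu_{\langle f\rangle}=(f')^2\ell=\ell$, so the quadratic variation $\langle M^{[f]}\rangle$ is the PCAF whose Revuz measure is the symmetric measure $\ell$ itself; by the Revuz correspondence this forces $\langle M^{[f]}\rangle_t=t$. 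Lévy's characterization then shows $M^{[f]}=B$ is a standard Brownian motion, which yields both $\langle Y\rangle_t=t$ and (once $N^{[f]}$ is shown to be of bounded variation) the semimartingale property of $Y$.

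Second, I would pin down $N^{[f]}$ through its signed Revuz measure, computed by integration by parts in the Dirichlet form. For $g\in C_c^\infty(\bR)$, since $f'\equiv 1$,
\[
	\sE^Y(f,g)=\frac{1}{2}\int_0^\infty g'(x)\frac{\re^{-2\gamma x}}{\pi}\,dx+\frac{1}{2}\int_{-\infty}^0 g'(x)\,\tp\rho(-x)\,dx.
\]
Integrating by parts on each half-line, the contributions at $\pm\infty$ vanish, while the discontinuity of the density of $\ell$ at $0$ produces a boundary term $\frac{g(0)}{2}(\tp\rho(0)-\frac1\pi)$ and the interior terms reassemble exactly into $-\int_\bR b(x)g(x)\,\ell(dx)$ with $b$ as in \eqref{EQ3BRG}. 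Hence
\[
	\sE^Y(f,g)=\frac{g(0)}{2}\Bigl(\tp\rho(0)-\frac{1}{\pi}\Bigr)-\int_\bR b(x)g(x)\,\ell(dx),
\]
so by the standard identification of zero-energy CAFs with signed smooth measures, $N^{[f]}$ has Revuz measure $b\cdot\ell+\frac{1}{2}\bigl(\frac1\pi-\tp\rho(0)\bigr)\delta_0$, whose absolutely continuous part $b\cdot\ell$ is precisely the Revuz measure of $\int_0^t b(Y_s)\,ds$.

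Third, I would match the atomic part at $0$ with the symmetric local time. From the defining approximation of $L^0(Y)$ together with $\langle Y\rangle_t=t$ (occupation-time formula), the Revuz measure of $L^0(Y)$ is the half-sum of the one-sided densities of $\ell$ at $0$, namely $\frac{1}{2}\bigl(\frac1\pi+\tp\rho(0)\bigr)\delta_0$. Equating this, multiplied by an unknown constant, with the atom $\frac12(\frac1\pi-\tp\rho(0))\delta_0$ forces the coefficient $\frac{1/\pi-\tp\rho(0)}{1/\pi+\tp\rho(0)}=\frac{1-\pi\tp\rho(0)}{1+\pi\tp\rho(0)}$, as claimed. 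Collecting terms gives $N^{[f]}_t=\int_0^t b(Y_s)\,ds+\frac{1-\pi\tp\rho(0)}{1+\pi\tp\rho(0)}L^0_t(Y)$, which is of bounded variation, completing the derivation of \eqref{EQ4DYD} with $Y$ itself as a weak solution.

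Finally, for well-posedness, weak existence is automatic since $Y$ solves \eqref{EQ4DYD}. For pathwise uniqueness I would first note $\bigl|\tfrac{1-\pi\tp\rho(0)}{1+\pi\tp\rho(0)}\bigr|<1$ because $\pi\tp\rho(0)>0$, so the local-time term is that of a genuine skew Brownian motion; pathwise uniqueness for the driftless skew equation is classical (Harrison--Shepp / Le Gall), and the drift $b$, locally bounded on $[0,\infty)$ and locally integrable on $(-\infty,0)$ under \textbf{(ACP)}, can be handled either by a Girsanov (drift-perturbation) transform---matching the heuristic description of $Y$ as a drift-perturbed skew Brownian motion---or by a scale-function transformation to a driftless equation. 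The main obstacle I anticipate is exactly this last step: reconciling the singular local-time term with the possibly unbounded drift so that one-dimensional pathwise uniqueness still applies; the other delicate point is the careful bookkeeping of the boundary term at $0$ and the identification of the Revuz measure of the symmetric local time, on which the precise value of the skewness coefficient hinges.
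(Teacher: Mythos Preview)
Your proposal is correct and follows essentially the same route as the paper: Fukushima's decomposition for the identity function, identification of the martingale part via its energy measure $\mu_{\langle f\rangle}=\ell$, integration by parts to extract the signed smooth measure of $N^{[f]}$, and then matching the atom at $0$ with the semimartingale local time. The only cosmetic differences are that the paper first writes the decomposition in terms of the Dirichlet-form local time $l^0$ (the PCAF with Revuz measure $\delta_0$) and then invokes \cite[Lemma~4.3]{L18} for the conversion $L^0_t(Y)=\tfrac{1+\pi\tp\rho(0)}{2\pi}l^0_t$, whereas you compute the Revuz measure of $L^0(Y)$ directly; and for well-posedness the paper simply cites \cite[Theorem~7.1]{L18} after noting $b\in L^1_{\loc}(\bR)$, while you sketch the Harrison--Shepp/Le~Gall plus drift-perturbation argument that underlies such a result.
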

\begin{proof}
We first show that $Y$ is a semimartingale. Take $f(r):=r\in \sF^Y_\mathrm{loc}$ and consider the Fukushima's decomposition for $f$:
\[
	f(Y_t)-f(Y_0)=M^f_t+N^f_t. 
\]
The martingale part $M^f$ is determined by its energy measure $\mu_{\<f\>}$ and for any $g\in C_c^\infty(\bR)$ (see \cite[Theorem~5.5.2]{FOT}),
\[
	\int gd\mu_{\<f\>}= 2\sE^Y(fg,f)-\sE^Y(f^2,g)=\int gd\ell. 
\]
It follows that $\mu_{\<f\>}=\ell$ and hence $M^f$ is equivalent to a standard Brownian motion. For the zero-energy part $N^f$, we note
\[
	-\sE^Y(f,g)=-\frac{1}{2}\int_\bR g'(r)\ell(dr)=\frac{1-\pi\tp\rho(0)}{2\pi}\cdot g(0)- \int_{-\infty}^0 g(r)\frac{\rho'(-r)}{2\rho(r)}\ell(dr)- \gamma \int_0^\infty g(r)\ell(dr). 
\]
Thus \cite[Corollary~5.5.1]{FOT} yields that $N^f$ is of bounded variation, and its associated signed smooth measure is
\[
	\mu_{N^u}=\frac{1-\pi\tp\rho(0)}{2\pi}\cdot \delta_0+b(r)\ell(dr). 
\]
Eventually, we can conclude
\begin{equation}\label{EQ4YTY}
	Y_t-Y_0=B_t+\int_0^t b(Y_s)ds+\frac{1-\pi\tp\rho(0)}{2\pi}\cdot l^0_t,\quad t\geq 0, 
\end{equation}
where $(B_t)$ is a certain standard Brownian motion and $l^0:=(l^0_t)_{t\geq 0}$ is the the local time of $Y$ at $0$, i.e. is the positive continuous additive functional of $M$ having Revuz measure $\delta_0$. Particularly, $Y$ is a semimartingale and $\<Y\>_t=t$. 

Since $\rho(r)>0$ for all $r\geq 0$, it is straightforward to verify that $b\in L^1_\mathrm{loc}(\bR)$. The well-posedness of \eqref{EQ4DYD} is concluded by e.g. \cite[Theorem~7.1]{L18}. It suffices to note that \cite[Lemma~4.3]{L18} yields 
\[
	L^0_t(Y)=\frac{1+\pi\tp\rho(0)}{2\pi}\cdot l^0_t. 
\]
Therefore \eqref{EQ4YTY} implies that $Y$ is a weak solution to \eqref{EQ4DYD}. This completes the proof. 
\qed \end{proof}

The weight parameter $\tp$ appearing in the symmetric measure $\fm$ plays a role of so-called ``skew'' constant. When $\rho\equiv 1$ and $\gamma=0$, $Y$ is noting but the well-known skew Brownian motion with the skew constant $\frac{1}{1+\pi\tp}$, which behaves like a Brownian motion except for the sign of each excursion is chosen by using an independent Bernoulli random variable of the parameter $\frac{1}{1+\pi\tp}$. For general $\rho$ and $\gamma$, $Y$ is called a \emph{general skew Brownian motion} in a recent work \cite{L18}. The non-skew case means that the last term in \eqref{EQ4DYD} disappears, i.e. $\tp=\frac{1}{\pi\rho(0)}$, and clearly the following corollary holds. 

\begin{corollary}\label{COR5}
When $\tp=\frac{1}{\pi\rho(0)}$, $Y$ is the unique solution to the SDE
\[
	dY_t=dB_t+b(Y_t)dt,
\]
where $B=(B_t)_{t\geq 0}$ is a standard Brownian motion and $b$ is defined by \eqref{EQ3BRG}.
\end{corollary}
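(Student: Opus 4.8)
The plan is to read off this corollary directly from Theorem~\ref{THM6} by specializing the value of the parameter $\tp$. The only SDE available at this point is \eqref{EQ4DYD}, whose Brownian part $dB_t$ and drift term $b(Y_t)dt$ are already in the desired form; the sole discrepancy is the extra local-time term $\frac{1-\pi\tp\rho(0)}{1+\pi\tp\rho(0)}\cdot dL^0_t(Y)$. Thus the entire task reduces to showing that the skew coefficient in front of $dL^0_t(Y)$ vanishes under the stated choice of $\tp$.

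First I would carry out the elementary substitution. With $\tp=\frac{1}{\pi\rho(0)}$ one has $\pi\tp\rho(0)=1$, whence
\[
	\frac{1-\pi\tp\rho(0)}{1+\pi\tp\rho(0)}=\frac{1-1}{1+1}=0.
\]
Inserting this into \eqref{EQ4DYD} kills the local-time term and leaves exactly $dY_t=dB_t+b(Y_t)dt$, with $b$ still given by \eqref{EQ3BRG} (which carries no dependence on $\tp$).

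Next I would invoke Theorem~\ref{THM6} for the probabilistic content. That theorem already asserts that $Y$, under $\mathbb{P}^Y_r$, is \emph{the} unique solution of the well-posed SDE \eqref{EQ4DYD}; since we have just shown that \eqref{EQ4DYD} coincides in this case with the displayed non-skew SDE, both existence of a weak solution and pathwise uniqueness are inherited verbatim. No separate well-posedness argument is needed, because the hypothesis $b\in L^1_\mathrm{loc}(\bR)$ used in Theorem~\ref{THM6} is likewise unaffected by the value of $\tp$.

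There is essentially no obstacle here: the corollary is a one-line arithmetic specialization. The only point to be careful about is confirming that nothing else in \eqref{EQ4DYD}---in particular the drift $b$ and the normalization $\<Y\>_t=t$---hides a dependence on $\tp$. A quick inspection of \eqref{EQ3BRG} confirms that $b$ is $\tp$-free and that the quadratic variation is unchanged, so the reduction is clean.
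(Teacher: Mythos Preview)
Your proposal is correct and matches the paper's approach exactly: the paper treats this corollary as immediate from Theorem~\ref{THM6}, noting only that the skew coefficient $\frac{1-\pi\tp\rho(0)}{1+\pi\tp\rho(0)}$ vanishes when $\tp=\frac{1}{\pi\rho(0)}$, which is precisely the substitution you carry out.
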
 

We end this section with a remark for the condition \textbf{(ACP)}. Firstly, $\rho>0$ is only employed to conclude the well-posedness of \eqref{EQ4DYD}. In fact when $\rho(0)=0$ (for example $\rho(r):=|r|^\alpha$ for a constant $0<\alpha<1$), the other derivation still works and $Y$ is a weak solution to 
\begin{equation}\label{EQ4YTW}
	dY_t=dB_t+b(Y_t)dt+dL^0_t(Y).
\end{equation}
Note that $b$ is independent of $\tp$, and different $\tp$ brings different signed radial process as explained in Remark~\ref{RM12}. Hence \eqref{EQ4YTW} has infinite weak solutions. The point $0$ where $L^0(Y)$ locates is usually called a \emph{barrier} for \eqref{EQ4YTW}, as in the reduced case $b\equiv 0$ (though this could not happen in \eqref{EQ4DYD} because $b\equiv 0$ implies that $\rho$ is constant, which contradicts to $\rho(0)=0$), the solution to \eqref{EQ4YTW} is nothing but the reflecting Brownian motion on $\bR_+$. At this time, $Y$ runs on $\bR_+$ and cannot go across the barrier $0$ to reach the left axis. However the presence of $b$ in \eqref{EQ4YTW} leads to infinite solutions, which are all irreducible as mentioned in Remark~\ref{RM12}. That means $Y$ starting from everywhere can reach every point of $\bR$, and the barrier $0$ is definitely fake for it. In \cite[\S7.2]{L18}, this kind of barriers are called \emph{pseudo barriers} and we also refer more discussions about the equations \eqref{EQ4DYD} and \eqref{EQ4YTW} to \cite{L18}.
Secondly, the condition \textbf{(ACP)} can be weaken to
\begin{description}
\item[(BV)] $\rho$ is cadlag locally of bounded variation on $\bR_+$. 
\end{description}
Under \textbf{(BV)}, let $\nu_\rho$ be the signed Radon measure on $(0,\infty)$ induced by $\rho$ and set $\mu_\rho:=\hat{\mu}_\rho\circ u^{-1}$ on $(-\infty,0)$ with
\[
	\hat{\mu}_\rho(dr):=\frac{\nu_\rho(dr)}{\rho(r)+\rho(r-)},\quad r>0,
\] 
where $\rho(r-)$ is the left limit of $\rho$ at $r>0$. As stated in \cite[Lemma~5.2]{L18}, $Y$ is still a semimartingale with $\langle Y\rangle_t=t$ and a weak solution to the SDE:
\begin{equation}\label{EQ4YWG}
	dY_t=dB_t-\gamma 1_{(0,\infty)}(Y_t)dt+\frac{1-\pi\tp\rho(0)}{1+\pi\tp\rho(0)}\cdot dL^0_t(Y)+\int_{r\in(-\infty,0)}\mu_\rho(dr)dL^r_t(Y),
\end{equation}
where $(L^r_t(Y))_{t\geq 0}$ is the symmetric semimartingale local time of $Y$ at $r<0$. It is worth noting that \textbf{(BV)} is the weakest assumption for the derivation of \eqref{EQ4YWG}, whose well-posedness holds under the following assumption with the convention $\rho(0-):=\rho(0)$ (see e.g. \cite[Lemma~6.3]{L18}):
\begin{description}
\item[(P)] $\rho(r),\rho(r-)>0$ for all $r\in \bR_+$. 
\end{description}
Clearly \textbf{(ACP)} implies \textbf{(BV)} and \textbf{(P)}. If we denote the absolute continuous part and singular part of $\mu_\rho$ by $b_\rho(r)dr$ and $\kappa_\rho$ respectively, i.e. $\mu_\rho(dr)=b_\rho(r)dr+\kappa_\rho(dr)$, then the last term on the right hand side of \eqref{EQ4YWG} is equal to
\[
	\int_{r\in (-\infty,0)}b_\rho(r)drdL^r_t(Y)+\int_{r\in(-\infty,0)}\kappa_\rho(dr)dL^r_t(Y)=b_\rho(Y_t)dt+\int_{r\in(-\infty,0)}\kappa_\rho(dr)dL^r_t(Y)
\]
by applying the occupation times formula. The condition \textbf{(ACP)} indicates $\kappa_\rho=0$ and meanwhile \eqref{EQ4YWG} reduces to \eqref{EQ4DYD}.

\section{Short-time heat kernel estimate for dBMVDs}\label{Sec-short-time-HKE}

In this section, utilizing the SDE characterization for the radial process of $M$ derived in Theorem~\ref{THM6}, we establish the two-sided short-time heat kernel estimate for $M$, i.e., for $t\le T$ with an arbitrary $0<T<\infty$. As in Proposition~\ref{PRO5}, $p(t,x,y)$ denotes the transition density of $M$ with respect to $\mathfrak{m}$. Recall that $|\cdot|$ denotes the Euclidean distance on $\fR_+$ as well as on $\fR^3$, and  by slightly abusing the notation, 
\begin{equation}\label{EQ5XYX}
|x-y|:=|x-\b0|+|y-\b0|, \quad x\in \fR^3,\, y\in \fR_+.
\end{equation} 
Before introducing the main result of this section, we state the following definition for the Kato class $\mathbf{K}_{1,1}$. For the definition of general Kato class $\mathbf{K}_{n,d}$ with $n,d\in \bN$, see e.g. \cite{ChenZ}. 

\begin{definition}[Kato class $\mathbf{K}_{1, 1}$]\label{def-kato-class}
A function $f: \IR\rightarrow \IR$ is called in Kato class $\mathbf{K}_{1,1}$, denoted by $f\in \mathbf{K}_{1,1}$, if $\sup_{x\in \IR}\int_{|x-y|\le 1}\left|f(y)\right|dy<\infty$.
\end{definition}

Recall that Proposition~\ref{PRO5} states the existence of the transition density $p(t,x,y)$ of $M$ but in the sense of almost everywhere. 
The main result of this section below claims its continuity and obtains its short-time estimate. Note that every function $g$ defined on $\bR_+$ is regarded as the one on $\bR$ by imposing $g|_{(-\infty,0)}=0$ if no confusions caused. 

\begin{theorem}\label{main-thm}
Assume \eqref{EQ3DRR}, \textbf{(ACP)} and that
\begin{equation}\label{assumption-kato}
\frac{\rho'}{\rho}\in \{f:\, |f|^2\in \mathbf{K}_{1, 1}\}.
\end{equation}
Then the transition density $p(t,x,y)$ of the dBMVD $M$ with respect to $\fm$ is jointly continuous on $(0,\infty)\times E\times E$. Furthermore for any fixed $0<T<\infty$, there exist positive constants $C_i$, $1\le i\le 12$, depending on $\rho, \gamma, \tp, T$ such that $p(t,x,y)$  satisfies the following estimate: When $t\in (0, T]$,
\begin{description}
\item{\rm (i)} For $x,y \in \fR_+$, 
\begin{equation}\label{EQ5CTE1}
\frac{C_1}{\sqrt{t}\rho(|y|)} \re^{-\frac{C_2|x-y|^2}{t}} \le p (t,x,y)\le\frac{C_3}{\sqrt{t}\rho(|y|)}\re^{-\frac{C_4|x-y|^2}{t}};
\end{equation}
\item{\rm (ii)} For $x\in \fR_+$ and $y\in \fR^3$,
\begin{equation}\label{EQ5CTE2}
\frac{C_5}{\sqrt{t}}\re^{2\gamma|y|-\frac{C_{6}|x-y|^2}{t}} \le p(t,x,y)\le
\frac{C_{7}}{\sqrt{t}}\re^{2\gamma|y|-\frac{C_{8}|x-y|^2}{t}};
\end{equation}
\item{\rm (iii)} For $x,y\in \fR^3$,
\begin{equation}\label{EQ5CTE3}
\frac{ C_{9}}{\sqrt{t}}\re^{2\gamma |y|-\frac{C_{10}\left(|x|+|y|\right)^2}{t}}+q(t,x,y) \le p(t,x,y)
\le \frac{C_{11}}{\sqrt{t}}\re^{2\gamma |y|-\frac{C_{12}\left(|x|+|y|\right)^2}{t}}+q(t,x,y),
\end{equation}
where
\begin{equation}\label{EQ5QTX}
q(t,x,y)=\sqrt{\frac{2\pi}{t^3}}|x||y|\re^{-\frac{\gamma^2}{2}t+\gamma (|x|+|y|)-\frac{|x-y|^2}{2t}}, \quad t>0,\, x,y\in \fR^3,
\end{equation}
is the transition density of killed distorted Brownian motion $M^{3,\b0}$ (see the statement after Definition~\ref{definition-dBMVD}) with respect to $\fm_3$. 
\end{description}
\end{theorem}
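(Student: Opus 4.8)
The plan is to transfer everything to the one-dimensional signed radial process $Y=u(M)$ of Proposition~\ref{PRO6} and Theorem~\ref{THM6}, and to assemble $p(t,x,y)$ from the transition density of $Y$ together with the killed density $q(t,x,y)$ supplied by Theorem~\ref{THM3}. The decisive input is the auxiliary Lemma~\ref{short-time-HKE-radial}, which gives joint continuity and a two-sided Gaussian estimate for $\hat p^Y$, the transition density of $Y$ taken with respect to the symmetric measure of the underlying skew Brownian motion. To prove that lemma I would exploit the SDE \eqref{EQ4DYD}: by Theorem~\ref{THM6}, $Y$ is a skew Brownian motion (skew constant governed by $\pi\tp\rho(0)$) perturbed by the drift $b$ of \eqref{EQ3BRG}. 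The driftless skew Brownian motion has an explicit heat kernel of the form (Gaussian)$+$(reflected Gaussian), so I would strip off the drift by a Girsanov transform and write $\hat p^Y$ as the skew kernel times a bridge expectation of the Cameron--Martin exponential. Since $|b|^2\asymp|\rho'/\rho|^2$ on $(-\infty,0)$ while $b\equiv-\gamma$ is bounded on $(0,\infty)$, the Kato hypothesis \eqref{assumption-kato} is precisely what forces this exponential functional to be bounded above and below in the bridge sense; standard Feynman--Kac/Kato-perturbation arguments (cf. \cite{L18}) then transfer the two-sided Gaussian bounds and the joint continuity from the skew kernel to $\hat p^Y$.

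Granting Lemma~\ref{short-time-HKE-radial}, I would pass from $\hat p^Y$ to $p$ via the rotational invariance of $M$. Because $\ell=\fm\circ u^{-1}$, integrating $p(t,x,\cdot)$ over a sphere $S^2_{|y|}$ recovers $\hat p^Y$; equivalently the spherical average of $p$ equals $\hat p^Y$. For $x,y\in\fR_+$ (case (i)) and for $x\in\fR_+,\,y\in\fR^3$ (case (ii)) the process can connect the prescribed points only by visiting $\b0$, and the strong Markov property at $\sigma_{\b0}$ together with rotational invariance forces $p$ to be radial, so that $p(t,x,y)$ equals $\hat p^Y$ read at the radii $u(x),u(y)$. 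Converting the skew-kernel density to $\fm$—directly on $\fR_+$, and on $\fR^3$ after spreading the rotationally invariant angular mass against $\fm_3$—produces the Jacobian factors $1/\rho(|y|)$ in \eqref{EQ5CTE1} and $\re^{2\gamma|y|}$ in \eqref{EQ5CTE2}, while the Gaussian bound on $\hat p^Y$, with the radial separation $|u(x)-u(y)|=|x-y|$ as in \eqref{EQ5XYX}, yields the stated estimates.

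For case (iii), $x,y\in\fR^3$, I would split $p=\bar q+q$ according to whether a path has hit $\b0$ by time $t$. Paths avoiding $\b0$ are governed by the part process $M^{3,\b0}={}_hW^\gamma$, so by Theorem~\ref{THM3} and Remark~\ref{RM1} its density is $q(t,x,y)=\re^{-\gamma^2 t/2}r(t,x-y)/(\psi_\gamma(x)\psi_\gamma(y))$ with the free Gaussian kernel $r(t,x-y)=(2\pi t)^{-3/2}\re^{-|x-y|^2/(2t)}$; substituting \eqref{EQ1PGX} reproduces \eqref{EQ5QTX} exactly. The complementary term $\bar q=\hat p^Y-\langle q\rangle$, with $\langle q\rangle$ the spherical average of $q$, collects the paths that do visit $\b0$ and is therefore radial; since such a path must descend from $|x|$ to $0$ and climb back to $|y|$, the near-diagonal parts of $\hat p^Y$ and $\langle q\rangle$ cancel and leave the reflected contribution with effective separation $|x|+|y|$, which is the source of the $(|x|+|y|)^2$ exponent in \eqref{EQ5CTE3}. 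Joint continuity of $p$ on $(0,\infty)\times E\times E$ then follows from that of the explicit $q$ and of $\bar q$ (inherited from the continuity of $\hat p^Y$), the only matching being across the junction $\b0$, secured by the common boundary value of $\hat p^Y$ at $0$.

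The main obstacle is \textbf{Lemma~\ref{short-time-HKE-radial}}, namely the Gaussian bounds and continuity for $\hat p^Y$ uniformly up to the skew junction at $0$. Two features make it delicate: the drift $b$ is only locally integrable and genuinely singular wherever $\rho'/\rho$ blows up, so nothing weaker than the Kato condition \eqref{assumption-kato}—certainly not boundedness of $b$—can control the Girsanov exponential; and the skew boundary weight $\tfrac{1-\pi\tp\rho(0)}{1+\pi\tp\rho(0)}$ in \eqref{EQ4DYD} must be reconciled with the drift transform so that the reflected-Gaussian part of the skew kernel survives the perturbation with the correct constants. Once the bridge exponential is pinned between two positive constants on $(0,T]$, the remainder is bookkeeping with the explicit skew kernel and the Jacobians described above.
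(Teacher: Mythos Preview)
Your overall architecture matches the paper: reduce to the signed radial process $Y$, invoke Lemma~\ref{short-time-HKE-radial} for the Gaussian bounds on $\wh p^Y$, read off cases (i)--(ii) by rotational invariance and the change of measure between $\wh\ell$ and $\fm$, and in case (iii) split $p=q+\bar q$ with $q$ supplied by the $h$-transform of Theorem~\ref{THM3}. The Jacobian factors $1/\rho(|y|)$ and $\re^{2\gamma|y|}$ arise exactly as you describe.

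Two points of divergence are worth flagging. First, for Lemma~\ref{short-time-HKE-radial} the paper does not use your Girsanov/bridge Feynman--Kac route; it instead builds $\wh p^Y$ from the skew Brownian kernel $p^Z$ via a Duhamel (parametrix) series $k_0=p^Z$, $k_n(t,r_1,r_2)=\int_0^t\!\int_\bR k_{n-1}(t-s,r_1,r_3)\,b(r_3)\,\nabla_{r_3}p^Z(s,r_3,r_2)\,dr_3\,ds$, and uses the Kato bound on $|b|^2$ together with the explicit gradient estimate for $p^Z$ to force geometric decay of the $k_n$. Your Girsanov approach is in principle viable, but you would still need to control the stochastic integral $\int_0^t b(Y_s)\,dB_s$ in the exponential martingale, not just $\int_0^t |b(Y_s)|^2\,ds$; the parametrix series sidesteps this by working purely at the level of kernels and their gradients. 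Second, in case (iii) you propose to bound $\bar q$ as the difference $\wh p^Y-\langle q\rangle$ and argue that the ``near-diagonal parts cancel.'' The paper does derive that identity for the purpose of joint continuity (your formula reappears as \eqref{eq-bar-q}), but for the \emph{estimate} it does not rely on any cancellation: it writes $\bar q(t,x,y)=\int_0^t p(t-s,\b0,y)\,\bP_x(\sigma_{\b0}\in ds)$, transfers to $Y$, and sandwiches the integrand between Gaussian kernels evaluated at $(0,\pm c|y|)$ so that the convolution closes up to $\wh p^Y(t,|x|,-c|y|)$, giving the $(|x|+|y|)^2$ exponent directly. Trying to extract two-sided bounds from the subtraction $\wh p^Y-\langle q\rangle$ would be substantially more delicate, since both terms individually carry the near-diagonal Gaussian of weight $|\,|x|-|y|\,|^2/t$ and you would need a quantitative lower bound on their difference.
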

\begin{remark}
We point out that $L^{q}(\IR)\subset \left\{f: |f|^2\in \mathbf{K}_{1, 1}\right\})$ for all $q\in (1, +\infty]$.
This yields that if $\rho'/\rho \in L^q(\bR_+)$ for some $q\in (1,+\infty]$, then \eqref{assumption-kato} holds. 
An example satisfying all assumptions in Theorem~\ref{main-thm} is given in Example~\ref{EXA1}, i.e. $\rho(r):=\re^{-2\alpha r}/\pi$ for a constant $\alpha\in \bR$. In this case, $\rho'/\rho\equiv -2\alpha\in L^\infty(\IR_+)$. 
\end{remark}

The proof will be divided into several steps. To accomplish it, we prepare a lemma concerning the short-time heat kernel estimate for the signed radial process $Y$.

\begin{lemma}\label{short-time-HKE-radial}
Assume that the same conditions as Theorem~\ref{main-thm} hold. Set a measure on $\bR$
\[
	\wh \ell(dr):= \frac{2}{1+\kappa}dr|_{(-\infty,0)}+\frac{2}{1-\kappa}dr|_{(0,\infty)}
\] 
with $\kappa:=\left(1-\pi\tp\rho(0)\right)/\left(1+\pi\tp\rho(0)\right)$. 
Then the signed radial process $Y$ has a jointly continuous transition density function
$\wh{p}^{Y}(t, r_1, r_2)$ with respect to $\wh \ell$, i.e. $\mathbb{P}^Y_{r_1}(Y_t\in dr_2)=\wh{p}^{Y}(t, r_1, r_2)\wh \ell(dr_2)$ for all $t> 0$ and $r_1,r_2\in \mathbb{R}$, and $\wh{p}^Y$ is jointly continuous on $(0,\infty)\times \bR\times \bR$.
Furthermore, for every $T> 0$, there exist constants  $C_i>0$, $13\le i \le 16$, such that the following estimate holds:
\begin{equation}\label{eq-short-time-HKE-radial}
\frac{C_{13}}{\sqrt{t}} \re^{-C_{14}|r_1-r_2|^2/t} \le \wh{p}^{Y}(t,r_1,r_2)\le\frac{C_{15}}{\sqrt{t}} \re^{-C_{16}|r_1-r_2|^2/t},
\quad 0<t\leq T,\, r_1,r_2\in \bR.
\end{equation}
\end{lemma}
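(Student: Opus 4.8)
The plan is to realise $Y$ as a drift (Girsanov) perturbation of a reference skew Brownian motion and then to transfer Gaussian bounds through a Kato-class stability argument, exactly as the well-posed SDE \eqref{EQ4DYD} suggests. First I would introduce the reference process $Z$, namely the skew Brownian motion obtained by deleting the drift term in \eqref{EQ4DYD}, i.e. the unique solution of $dZ_t=dW_t+\kappa\, dL^0_t(Z)$ driven by a Brownian motion $W$, with $\kappa=(1-\pi\tp\rho(0))/(1+\pi\tp\rho(0))$. This $Z$ is reversible with respect to $\wh\ell$ (one checks directly that $\wh\ell$ is its symmetric measure: on each half-line $Z$ is Brownian, and the skew coefficient $\kappa$ forces the relative side-weights $\tfrac{2}{1-\kappa}$ and $\tfrac{2}{1+\kappa}$), and its transition density $\wh p^Z(t,r_1,r_2)$ with respect to $\wh\ell$ is given by the classical closed-form skew-Brownian kernel, a finite combination of Gaussians $q_t(r_1\mp r_2)$ and $q_t(|r_1|+|r_2|)$ with $q_t(r)=(2\pi t)^{-1/2}\re^{-r^2/(2t)}$. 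From this explicit form one reads off both the joint continuity of $\wh p^Z$ on $(0,\infty)\times\bR\times\bR$ and the two-sided bound \eqref{eq-short-time-HKE-radial} for $\wh p^Z$, with constants depending only on $\kappa$.

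Next I would pass from $Z$ to $Y$ by Girsanov's theorem. Setting
\[
D_t:=\exp\left(\int_0^t b(Z_s)\,dW_s-\frac12\int_0^t b(Z_s)^2\,ds\right),
\]
the measure $d\IP^Y_r:=D_t\,d\IP^Z_r$ on $\mathcal{F}_t$ turns $Z$ into a weak solution of \eqref{EQ4DYD}, since the drift change affects only the Brownian part and leaves the local-time term intact; the pathwise uniqueness in Theorem~\ref{THM6} then identifies this with the law of $Y$. Conditioning on the endpoint (a Doob bridge / $h$-transform decomposition) yields the exact link
\[
\wh p^Y(t,r_1,r_2)=\wh p^Z(t,r_1,r_2)\cdot \IE^Z_{r_1}\!\left[\,D_t\;\middle|\;Z_t=r_2\right],
\]
which is again a density with respect to $\wh\ell$ because $\wh p^Z$ is. It therefore remains only to show that the conditional gauge factor on the right is bounded between two positive constants, and is continuous, uniformly for $0<t\le T$ and $r_1,r_2\in\bR$.

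This last step is where the hypothesis \eqref{assumption-kato} enters, and it is the main obstacle. The drift satisfies $b^2\in\mathbf{K}_{1,1}$: on $(0,\infty)$ one has $b^2\equiv\gamma^2$, while on $(-\infty,0)$, $b(r)^2=\tfrac14(\rho'/\rho)^2(-r)$, so \eqref{assumption-kato} supplies precisely the uniform local integrability needed. Given $b^2\in\mathbf{K}_{1,1}$, Khasminskii's lemma furnishes the uniform exponential integrability of $\int_0^t b(Z_s)^2\,ds$ for small $t$, which makes $D_t$ a genuine martingale and, more importantly, controls the conditional expectation of the stochastic exponential along the Gaussian bridge. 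Invoking the conditional-gauge technique together with a $3G$-type inequality for the skew kernel $\wh p^Z$ (the method developed for Kato-class perturbations, cf.\ \cite{ChenZ}) then shows $\IE^Z_{r_1}[\,D_t\mid Z_t=r_2]$ is comparable to $1$ and depends continuously on $(t,r_1,r_2)$ on $(0,T]\times\bR\times\bR$. Combining this with the bounds and continuity of $\wh p^Z$ from the first step gives both the joint continuity of $\wh p^Y$ and the estimate \eqref{eq-short-time-HKE-radial}, with $C_{13},\dots,C_{16}$ depending on $\rho,\gamma,\tp,T$. The genuinely delicate points are justifying the martingale and integrability properties of $D_t$ under only the Kato bound (rather than boundedness of $b$), and pushing the $3G$ estimate through $\wh p^Z$, whose off-diagonal reflection terms $q_t(|r_1|+|r_2|)$ must be handled separately from the principal Gaussian term $q_t(r_1-r_2)$.
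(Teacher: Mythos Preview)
Your setup coincides with the paper's: both take the skew Brownian motion $Z$ solving $dZ_t=dW_t+\kappa\,dL^0_t(Z)$ as reference process, observe that it is $\wh\ell$-symmetric with an explicit Gaussian-type kernel $p^Z$, and realise $Y$ as a drift perturbation of $Z$. From there the routes diverge. The paper constructs $\wh p^Y$ directly by a Duhamel/parametrix series
\[
\wh p^Y=\sum_{n\ge 0}k_n,\qquad k_0=p^Z,\quad
k_n(t,r_1,r_2)=\int_0^t\!\!\int_\bR k_{n-1}(t-s,r_1,r_3)\,b(r_3)\,\nabla_{r_3}p^Z(s,r_3,r_2)\,dr_3\,ds,
\]
and uses the gradient bound $|\nabla_{r_1}p^Z(t,r_1,r_2)|\lesssim t^{-1}\re^{-\beta|r_1-r_2|^2/t}$ together with $|b|^2\in\mathbf K_{1,1}$ to force geometric decay of $k_n$; the upper Gaussian bound falls out of the summed series, the lower bound follows by a standard chain argument, and joint continuity is checked term by term with dominated convergence.

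Your Girsanov/conditional-gauge route is a legitimate alternative in spirit, but be aware of two genuine hurdles that the paper's approach sidesteps. First, the conditional gauge theorem you cite from \cite{ChenZ} is tailored to Feynman--Kac (potential) perturbations $\exp\big(\int_0^t V(Z_s)\,ds\big)$, not to stochastic exponentials $D_t$; to invoke it you would have to rewrite $D_t$ as $\phi(Z_t)/\phi(Z_0)$ times a Feynman--Kac factor via It\^o's formula for $\log\phi$, and under \textbf{(ACP)} alone the required second derivative $\phi''$ need not exist. Second, a direct conditional control of the martingale part of $D_t$ along the bridge, via a $3G$-type inequality for the skew kernel, essentially reduces to the same pairing of $|b|^2\in\mathbf K_{1,1}$ with a gradient bound on $p^Z$ that drives the Duhamel series anyway (cf.\ \cite{Z1}, which the paper follows). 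So your programme can be carried out, but the parametrix expansion is the cleaner fit to the stated hypotheses.
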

\begin{proof}
The idea of the proof to the estimate \eqref{eq-short-time-HKE-radial} is refereed to, for instance, \cite[Theorem~A]{Z1}. 
Note that $-1<\kappa<1$. Let $Z$ be the skew Brownian motion
$$ dZ_t = dB_t + \kappa\cdot dL^0_t (Z) ,
$$
where $B_t$ is a certain one-dimensional standard Brownian motion and $L^0_t (Z)$ is the symmetric semimartingale local time of $Z$ at $0$. Clearly $Z$ is symmetric with respect to $\wh \ell$ (see e.g. \cite{H81}) and
the transition density function $p^Z(t, r_1, r_2)$ of $Z$ with respect to $\wh \ell$ is explicitly known as follows:
(see e.g. \cite[III.(1.16)]{RY}):
\begin{equation}\label{EQ5PZR}
\begin{aligned}
p^Z&(t,r_1,r_2) \\
&=\frac{1-\kappa}{2}\left[g_t(r_2-r_1)+\kappa g_t(r_2+r_1)\right]\mathbf{1}_{\{r_1>0, r_2>0\}} +\frac{1-\kappa^2}{2}g_t(r_2-r_1)\mathbf{1}_{\{r_1\geq 0, r_2\leq 0\}} 
\\
&+\frac{1+\kappa}{2}\left[g_t(r_2-r_1)-\kappa g_t(r_2+r_1)\right]\mathbf{1}_{\{r_1<0, r_2<0\}}+\frac{1-\kappa^2}{2}g_t(r_2-r_1)\mathbf{1}_{\{r_1\leq 0, r_2\geq 0\}},
\end{aligned}
\end{equation}
where $g_t(r)=\re^{-r^2/2t}/\sqrt{2\pi t}$. 
Note that $p^Z$ is jointly continuous on $(0,\infty)\times \bR\times \bR$ and smooth  at $r_1, r_2\neq 0$.
In addition, one can verify directly that for $t>0$ and $r_1\neq 0$,
\begin{equation}\label{EQ5RRN}
	\bR\ni r_2\mapsto \nabla_{r_1}p^Z(t,r_1,r_2)
\end{equation}
is continuous, and for some constants $c>0$ and $0<\alpha< \beta$ the following inequalities hold  for $t\in (0,T]$:
\begin{align}\label{z-two-sided}
p^Z(t, r_1, r_2)\leq c  t^{-1/2} \exp (-\alpha |r_1-r_2|^2/t),\quad r_1,r_2\in \bR,
\end{align}
and 
\begin{equation}\label{partial-z-two-sided}
| \nabla_{r_1} p^Z(t, r_1, r_2)| \leq c t^{-1} \exp (-\beta|r_1-r_2|^2/t),\quad r_1\neq 0,r_2\in \bR.
\end{equation}

The diffusion process $Y$ can be obtained from $Z$ through a drift perturbation
(i.e. Girsanov transform) induce by $b$ given by \eqref{EQ3BRG}. 
We now set $k_0(t,r_1,r_2)=p^Z(t,r_1,r_2)$, and then inductively define
\begin{equation}\label{EQ5KNT}
k_n(t,r_1,r_2):=\int_0^t \int_{\IR} k_{n-1}(t-s, r_1,r_3)\cdot b(r_3)\cdot \nabla_{r_3}p^Z(s,r_3,r_2)dr_3ds, \quad \text{for } n\ge 1.
\end{equation}
Before moving on, we record two estimates: Firstly, since $|b|^2\in \mathbf{K}_{1,1}$ due to \eqref{assumption-kato}, it holds
\begin{align*}
 \left(\int_{\IR}|b(r_3)|^2 \re^{-2(\beta-\alpha)|r_2-r_3|^2/T}\;dr_3\right)^{1/2}
&=\left(\sum_{i=0}^\infty\int_{i\le |r_3-r_2|\le i+1}|b(r_3)|^2 \re^{-2(\beta-\alpha)|r_2-r_3|^2/T}\;dr_3\right)^{1/2} \nonumber
\\
&\le \left(\sum_{i=0}^\infty \re^{-2(\beta-\alpha)i^2/T}\int_{i\le |r_3-r_2|\le i+1}|b(r_3)|^2 \;dr_3\right)^{1/2}\\& \stackrel{\eqref{assumption-kato}}{< } \infty. 
\end{align*}
Thus we can set that for some $0<c_1<\infty$ independent of $r_2$,
\begin{align}\label{e:42}
 \left(\int_{\IR}|b(r_3)|^2 \re^{-2(\beta-\alpha)|r_2-r_3|^2/T}\;dr_3\right)^{1/2}\leq c_1.
\end{align}
Secondly, it follows from \eqref{z-two-sided} and \eqref{partial-z-two-sided} that for $0<t\leq T$,
\[
\begin{aligned}
|k_1 (t,r_1,r_2)|&=\left|\int_0^t \int_{0\neq r_3\in \IR} k_{0}(t-s, r_1,r_3)\cdot b(r_3)\cdot \nabla_{r_3}p^Z(s,r_3,r_2)dr_3ds\right| 
\\
&\le  c^2 \int_0^t \int_{\IR}\frac{1}{\sqrt{t-s}}\re^{-\frac{\alpha |r_1-r_3|^2}{t-s}}\cdot |b(r_3)| \cdot\frac{1}{s}\re^{-\frac{\beta |r_3-r_2|^2}{s}}dr_3 ds 
\\
&=  c^2 \int_0^t \frac{1}{s^{3/4}}\frac{1}{(t-s)^{1/4}}\int_{\IR}\frac{1}{(t-s)^{1/4}}\re^{-\frac{\alpha |r_1-r_3|^2}{t-s}}|b(r_3)| \frac{1}{s^{1/4}}\re^{-\frac{\beta |r_3-r_2|^2}{s}}dr_3 ds
\\
&\le c^2 \int_0^t \frac{1}{s^{3/4}}\frac{1}{(t-s)^{1/4}} ds \cdot \left(\int_{\IR}\frac{1}{\sqrt{t-s}}\re^{-\frac{2\alpha |r_1-r_3|^2}{t-s}}\cdot\frac{1}{\sqrt{s}}\re^{-\frac{2\alpha |r_3-r_2|^2}{s}}dr_3 \right)^{1/2}
\\
& \qquad \times\left(\int_{\IR}|b(r_3)|^2 \re^{-\frac{2(\beta-\alpha)|r_2-r_3|^2}{T}}dr_3\right)^{1/2}.
\end{aligned}\]
A straightforward computation yields that $\int_0^t 1/\left(s^{3/4}(t-s)^{1/4}\right) ds$ is bounded by a constant independent of $t$. Then from the Chapman-Kolmogorov equation for Gaussian densities and \eqref{e:42} we have for some constant $c_2>0$,
\begin{equation}\label{eq:48}
\begin{aligned}
|k_1 & (t,r_1,r_2)|\leq c^2c_1c_2\frac{1}{t^{1/4}}\re^{-\frac{\alpha |r_1-r_2|^2}{t}}.
\end{aligned}\end{equation}
With these constants $c,c_1,c_2$ independent of $t,r_1,r_2$ at hand, set
\[
	t_0:=\left(\frac{1}{3cc_1c_2}\right)^4\wedge T, 
\]
and we will prove by induction that for all $n\in \bN$,
\begin{equation}\label{eq:47}
k_n(t,r_1,r_2)\leq \frac{c}{3^n}\cdot  t^{-1/2} \exp (-\alpha |r_1-r_2|^2/t),\quad 0<t\leq t_0, \, r_1,r_2\in \bR.
\end{equation}
Obviously \eqref{eq:47} holds for $n=0$ and assume that it holds for $n-1$. Similar to \eqref{eq:48}, we have for $0<t\leq t_0$,
\[
\begin{aligned}
|k_n(t,r_1,r_2)|&=\left|\int_0^t \int_{0\neq r_3\in \IR} k_{n-1}(t-s, r_1,r_3)\cdot b(r_3)\cdot \nabla_{r_3}p^Z(s,r_3,r_2)dr_3ds\right| 
\\
&\le  \frac{c^2}{3^{n-1}}  \cdot \int_0^t \int_{\IR}\frac{1}{\sqrt{t-s}}\re^{-\frac{\alpha |r_1-r_3|^2}{t-s}}\cdot |b(r_3)| \cdot\frac{1}{s}\re^{-\frac{\beta |r_3-r_2|^2}{s}}dr_3 ds 
\\
&\leq \frac{c^2c_1c_2}{3^{n-1}}\cdot t^{-1/4} \exp (-\alpha |r_1-r_2|^2/t) \\
&\leq \frac{c}{3^n}\cdot  t^{-1/2} \exp (-\alpha |r_1-r_2|^2/t). 
\end{aligned}
\]
Hence \eqref{eq:47} is concluded. 
It then follows that $\sum_{n=0}^\infty k_n(t, r_1, r_2)$ converges locally uniformly on $(t, r_1, r_2)\in (0, t_0]\times \bR\times \bR$. From here using the same argument as that in \cite[Lemma 3.17]{Lou}, one can further see that $\sum_{n=0}^\infty k_n(t, r_1, r_2)$  is absolutely convergent for $(t, r_1, r_2)\in (0, T]\times \IR \times \IR$ and indeed the transition density of $Y$ with respect to $\wh \ell$, i.e. 
\[
	\wh p^Y(t,r_1,r_2)=\sum_{n=0}^\infty k_n(t, r_1, r_2),\quad 0<t\leq T, \, r_1,r_2\in \bR.
\] 
Furthermore, it holds for some constant $c_3>0$ such that
\begin{equation}\label{EQ5PYT}
\wh{p}^Y(t,r_1, r_2)\le c_3 t^{-1/2} \exp\left(-\alpha |r_1-r_2|^2/t\right), \quad0<t\leq T,\, r_1,r_2\in \bR.
\end{equation}
By a standard chain argument (see e.g. \cite[pp. 36-37]{Lou}), it is not hard to see that the same Gaussian type lower bound holds. 

Finally let us prove the joint continuity of $\wh p^Y$ on $(0,\infty)\times \bR\times \bR$. We first show it for $t\in (0,t_0]$.  When $t\leq 0$, we write $k_n(t,r_1,r_2)=0$ for all $n\geq 0$ for convenience and \eqref{EQ5KNT} becomes
\begin{equation}\label{EQ5KNTR}
k_n(t,r_1,r_2):=\int_0^{t_0} \int_{\IR} k_{n-1}(t-s, r_1,r_3)\cdot b(r_3)\cdot \nabla_{r_3}p^Z(s,r_3,r_2)dr_3ds.
\end{equation}
For the sake of the local uniform convergence of $\sum_{n=0}^\infty k_n(t, r_1, r_2)$, it suffices to show the joint continuity of $k_n$. We utilize induction and clearly $k_0=p^Z$ is jointly continuous on $(0,t_0]\times \bR\times \bR$. Assume this holds for $k_{n-1}$. Take two arbitrary constants $R>0$ and $\delta<t_0/2$ and we turn to derive the joint continuity of $k_n$ at $(t^*,r^*_1,r^*_2)\in [\delta, t_0]\times [-R, R]\times [-R,R]$: Take an arbitrary sequence $(t_m, r^m_1,r^m_2)\rightarrow (t^*,r^*_1,r^*_2)$ on $[\delta, t_0]\times [-R,R]\times [-R,R]$ as $m\rightarrow \infty$ and we will verify
\begin{equation}\label{eq:51}
\lim_{m\rightarrow \infty} k_n(t_m,r^m_1,r^m_2)= k_n(t^*,r^*_1,r^*_2). 
\end{equation}
To do this, fix a small constant $\varepsilon<\delta/2$ and split the integrand on the right hand side of \eqref{EQ5KNTR} into three parts by multiplying $I_+(s):=1_{(0,\varepsilon]}(s)$, $I_\varepsilon(s):=1_{(\varepsilon, t-\varepsilon)}(s)$ and $I_-(s):=1_{[t-\varepsilon,t]}(s)$ respectively. Denote
\begin{equation}\label{EQ3JPV}
	J_{\pm,\varepsilon}(t,r_1,r_2):=\int_0^{t_0}\int_\bR I_{\pm,\varepsilon}(s) k_{n-1}(t-s, r_1,r_3)\cdot b(r_3)\cdot \nabla_{r_3}p^Z(s,r_3,r_2)dr_3ds.
\end{equation}
Then $k_n(t,r_1,r_2)=J_+(t,r_1,r_2)+J_\varepsilon(t,r_1,r_2)+J_-(t,r_1,r_2)$. 
Now we show 
\begin{equation}\label{eq:53}
\lim_{m\rightarrow \infty}J_{\pm,\varepsilon}(t_m,r^m_1,r^m_2)=J_{\pm,\varepsilon}(t^*,r^*_1,r^*_2).
\end{equation} 
by utilizing dominated convergence theorem respectively. For $J_\varepsilon
$, we derive the upper bound for the integrand in $J_\varepsilon$ firstly. Note that for $s\in (\varepsilon,t-\varepsilon)$, \eqref{partial-z-two-sided} and \eqref{eq:47} yield 
\[
|k_{n-1}(t-s,r_1,r_3)|\lesssim \frac{1}{\sqrt{\varepsilon}}
\]
and 
\[
\begin{aligned}
\left|\nabla_{r_3}p^Z(s,r_3,r_2)\right|&\lesssim \frac{1}{\varepsilon} \exp (-\beta|r_3-r_2|^2/{t_0}) \\
&\leq \frac{1}{\varepsilon}\left(1_{\{r:|r|\leq 2R\}}(r_3)+1_{\{r:|r|> 2R\}}(r_3)\cdot \exp (-\frac{\beta|r_3|^2}{4t_0})\right),
\end{aligned}\]
where the last inequality holds since for $|r_3|>2R>R\geq |r_2|$, $|r_3-r_2|\geq |r_3|/2$. Obviously 
\[
\frac{1}{\varepsilon^{3/2}}b(r_3)\left(1_{\{r:|r|\leq 2R\}}(r_3)+1_{\{r:|r|> 2R\}}(r_3)\cdot \exp (-\frac{\beta|r_3|^2}{4t_0})\right) \in L^1([0,{t_0}]\times \bR) 
\]
because of $|b|^2\in \mathbf{K}_{1,1}$. Due to the joint continuity of $k_{n-1}$ and \eqref{EQ5RRN}, it is easy to find that for a.e. $(s,r_3)$,
\[
\begin{aligned}
\lim_{m\rightarrow \infty}I_{\varepsilon}(s)& k_{n-1}(t_m-s, r^m_1,r_3)\cdot b(r_3)\cdot \nabla_{r_3}p^Z(s,r_3,r^m_2)\\
	&=I_{\varepsilon}(s) k_{n-1}(t^*-s, r^*_1,r_3)\cdot b(r_3)\cdot \nabla_{r_3}p^Z(s,r_3,r_2^*).
\end{aligned}\]  
Hence we can obtain \eqref{eq:53} for  $J_\varepsilon$.  To treat $J_-$, we utilize substitution as follows
\[
\begin{aligned}
J_-(t,r_1,r_2)&=
\int_{t-\varepsilon}^t \int_{\IR} k_{n-1}(t-s, r_1,r_3)\cdot b(r_3)\cdot \nabla_{r_3}p^Z(s,r_3,r_2)dr_3ds \\
&=\int_0^\varepsilon \int_{\IR} k_{n-1}(s, r_1,r_3)\cdot b(r_3)\cdot \nabla_{r_3}p^Z(t-s,r_3,r_2)dr_3ds.
\end{aligned}
\]
Analogically $|k_{n-1}(s,r_1,r_3)|\lesssim s^{-1/2}\in L^1([0,{t_0}])$ and  
\[
\begin{aligned}
\left|\nabla_{r_3}p^Z(t-s,r_3,r_2)\right|&\lesssim \frac{2}{\delta} \exp (-\beta|r_3-r_2|^2/{t_0}) \\
&\leq \frac{2}{\delta}\left(1_{\{r:|r|\leq 2R\}}(r_3)+1_{\{r:|r|> 2R\}}(r_3)\cdot \exp (-\frac{\beta|r_3|^2}{4t_0})\right).
\end{aligned}\]
Hence \eqref{eq:53} holds for $J_-$ by the same argument as that for $J_\varepsilon$. Finally, we use a generalized dominated convergence theorem \cite[\S2.3, Exercise~20]{F99} to establish \eqref{eq:53} for $J_+$, i.e.
\begin{equation}\label{eq-jointcont-plus}
\lim_{m\rightarrow \infty} J_+(t_m,r^m_1,r^m_2)= J_+(t^*,r^*_1,r^*_2). 
\end{equation}
For $s\leq \varepsilon<\delta/2$ and $\delta\leq t\leq t_0$, it follows from \eqref{eq:47} that 
\[
\begin{aligned}
	|k_{n-1}(t-s,r_1,r_3)|
	&\lesssim \sqrt{\frac{2}{\delta}}\exp\left(-\alpha |r_1-r_3|^2/t_0\right)\\
	&	 \leq \sqrt{\frac{2}{\delta}}\left(1_{\{r:|r|\leq 2R\}}(r_3)+1_{\{r:|r|> 2R\}}(r_3)\cdot \exp (-\frac{\alpha|r_3|^2}{4t_0})\right)=: g(r_3). 
\end{aligned}\]
Then for $s\leq \varepsilon$, 
\begin{equation}\label{EQ5KNTS}
	\left| k_{n-1}(t-s, r_1,r_3)\cdot b(r_3)\cdot \nabla_{r_3}p^Z(s,r_3,r_2) \right|\lesssim g(r_3)|b(r_3)|\cdot |\nabla_{r_3}p^Z(s,r_3,r_2)|.
\end{equation}
For notation convenience, we denote
\begin{equation*}
f_{t,r_1, r_2}(s, r_3):=k_{n-1}(t-s, r_1,r_3) b(r_3)\nabla_{r_3}p^Z(s,r_3,r_2),
\end{equation*}
and 
\[
h_{r_2}(s, r_3):=g(r_3) |b(r_3)|\cdot | \nabla_{r_3}p^Z(s,r_3,r_2)|. 
\]
Then \eqref{eq-jointcont-plus} amounts to
\begin{equation*}
\lim_{m\rightarrow \infty} \int_{\IR} \int_0^\eps f_{t_m,r^m_1, r^m_2}(s, r_3)ds dr_3 = \int_{\IR} \int_0^\eps  f_{t^*,r^*_1, r^*_2}(s, r_3)dsdr_3. 
\end{equation*}
Towards this by applying \cite[\S2.3, Exercise~20]{F99}, it suffices to verify the following conditions:
\begin{description}
\item[\rm (i)]$|f_{t_m, r^m_1, r^m_2}(s, r_3)|\le  h_{r^m_2}(s, r_3)$ for all $0<s\le \eps$, and a.e. $r_3\in \IR$;
\item[\rm (ii)] $\lim_{m\rightarrow \infty}f_{t_m,r^m_1, r^m_2}(s, r_3)=f_{t^*,r_1^*, r^*_2}(s, r_3)$ for all $0<s\le \eps$ and a.e. $r_3\in \bR$;
\item[\rm (iii)] $\lim_{m\rightarrow \infty}h_{r^m_2}(s, r_3)=h_{r^*_2}(s, r_3)$ for all $0<s\le \eps$ and a.e. $r_3\in \bR$;
\item[\rm (iv)] $\lim_{m\rightarrow \infty}  \int_{\IR} \int_0^\eps h_{r^m_2}(s, r_3)dsdr_3 = \int_{\IR} \int_0^\eps h_{r^*_2}(s, r_3)dsdr_3$.
\end{description}
Obviously (i) holds in view of \eqref{EQ5KNTS}. Since $k_{n-1}$ is jointly continuous and $r_2\mapsto \nabla_{r_3}p^Z(s,r_3,r_2)$ is continuous for $s>0$ and $r_3\neq 0$, (ii) and (iii) are also both true. To verify (iv), note that
\[
\int_{\IR} \int_0^\eps h_{r^m_2}(s, r_3)dsdr_3=\int_{\IR} g(r_3) |b(r_3)|\left( \int_0^\eps |\nabla_{r_3}p^Z(s,r_3,r^m_2)|ds\right)dr_3
\]
and $g(r_3)|b(r_3)|\in L^1(\bR)$.  Hence we only need to show that for a.e. $r_3\in \bR$, $\int_0^\eps |\nabla_{r_3}p^Z(s,r_3,r^m_2)|ds$ is bounded and
\begin{equation}\label{eq-54}
	\lim_{m\rightarrow \infty}\int_0^\eps |\nabla_{r_3}p^Z(s,r_3,r^m_2)|ds=\int_0^\eps |\nabla_{r_3}p^Z(s,r_3,r^*_2)|ds. 
\end{equation}
To accomplish these, it follows from \eqref{EQ5PZR} that for $r_3\neq 0, \pm r^m_2$,
\begin{equation}\label{EQ5NRP}
	|\nabla_{r_3}p^Z(s,r_3,r^m_2)|\le 4\left(\left|-\frac{r^m_2-r_3}{s}g_s(r^m_2-r_3)\right|+\left|\frac{r^m_2+r_3}{s}g_s(r^m_2+r_3)\right|\right).
\end{equation}
By computation we have
\begin{equation}\label{eq-56}
\left|r^m_2-r_3\right|\int_0^\varepsilon \frac{g_s(r^m_2-r_3)}{s}ds=\frac{1}{\sqrt{2\pi}}\int_{\frac{|r^m_2-r_3|^2}{\varepsilon}}^\infty \frac{\re^{-\tilde{s}/2}}{\sqrt{\tilde{s}}}d\tilde{s} \le 1,
\end{equation}
where for the ``$=$" we use the substitution $\tilde{s}:=|r^m_2-r_3|^2/s$. Similarly, 
\begin{equation}\label{eq-57}
\left|r^m_2+r_3\right|\int_0^\varepsilon \frac{g_s(r^m_2+r_3)}{s}ds=\frac{1}{\sqrt{2\pi}}\int_{\frac{|r^m_2+r_3|^2}{\varepsilon}}^\infty \frac{\re^{-\tilde{s}/2}}{\sqrt{\tilde{s}}}d\tilde{s} \le 1,
\end{equation}
with the substitution $\tilde{s}:=|r^m_2+r_3|^2/s$.
Plugging the above upper bounds  back into \eqref{EQ5NRP}, we get for all $r_3$ with $r_3\neq 0, \pm r^m_2$,
\[
\int_0^\varepsilon |\nabla_{r_3}p^Z(s,r_3,r^m_2)|ds\le 8. 
\]
For \eqref{eq-54}, we appeal to the generalized dominated convergence theorem \cite[\S2.3, Exercise~20]{F99} as well. Indeed, fix $r_3\neq 0, \pm r_2^m$ and denote the right hand side of \eqref{EQ5NRP} by $j_{r_2^m}(s)$. Then
\[
	|\nabla_{r_3}p^Z(s,r_3,r^m_2)|\le  j_{r^m_2}(s).
\] 
Clearly for all $0<s\leq \varepsilon$, $|\nabla_{r_3}p^Z(s,r_3,r^m_2)|$ converges to $|\nabla_{r_3}p^Z(s,r_3,r^*_2)|$ and $j_{r^m_2}(s)$ converges to $j_{r^*_2}(s)$ as $m\uparrow \infty$. 
In addition, \eqref{eq-56} and \eqref{eq-57} tell us that the integration of $j_{r^m_2}(s)$ from $0$ to $\varepsilon$ also converges to that of $j_{r^*_2}(s)$. Hence \eqref{eq-54} is a consequence of \cite[\S2.3, Exercise~20]{F99}. 
Now with all (i)-(iv) having been verified, \eqref{eq-jointcont-plus} as well as \eqref{eq:51} can be concluded. By letting $\delta\downarrow 0$ and $R\uparrow \infty$, we eventually conclude the joint continuity of $k_n$ on $(0,t_0]\times \bR\times \bR$. This leads to the joint continuity of $\wh p^Y$ on $(0,t_0]\times \bR\times \bR$. For $t_0\leq t\leq \frac{3}{2}t_0$, note that 
\begin{equation}\label{eq:60}
	\wh p^Y(t,r_1,r_2)=\int_\bR \wh p^Y(t_0/2,r_1,r_3)\wh p^Y(t-t_0/2,r_3,r_2)\wh \ell(dr_3)
\end{equation}
and $\wh p^Y$ is bounded by \eqref{EQ5PYT}. Hence the integrand in \eqref{eq:60} is bounded by
\[
 \frac{c_3}{\sqrt{t_0/2}}\re^{-2\alpha |r_1-r_3|^2/{t_0}}\cdot \frac{c_3}{\sqrt{t-t_0/2}}\lesssim \re^{2\alpha r_1^2/t_0} \cdot \re^{-\alpha r^2_3/t_0},
\]
since $1/\sqrt{t-t_0/2}\leq \sqrt{2/t_0}$ and $|r_1-r_3|^2\geq r^2_3/2-r^2_1$. 
Then the dominated convergence theorem indicates the joint continuity of $\wh p^Y$ for $t\in [t_0,\frac{3}{2}t_0]$. By repeating this argument, one can eventually conclude the joint continuity of $\wh p^Y$ on $(0,\infty)\times \bR \times \bR$. 
This completes the proof. 
\qed \end{proof}
\begin{remark}
Clearly, 
\[
	\tilde{p}^Y(t,r_1,r_2):=\frac{2}{1+\kappa}\wh p^Y(t,r_1,r_2)\cdot 1_{\{r_2<0\}}+\frac{2}{1-\kappa}\wh p^Y(t,r_1,r_2)\cdot 1_{\{r_2>0\}}
\]
is the transition density function of $Y$ with respect to the Lebesgue measure. Note that $\tilde{p}^Y$ is not continuous at $r_1=0$ or $r_2=0$, unless $\kappa=0$, i.e. $\pi\tp\rho(0)=1$. It is easy to figure out that $\tilde{p}^Y$ satisfies the same Gaussian type estimate as \eqref{eq-short-time-HKE-radial} for $r_1,r_2\neq 0$. 
\end{remark}

The following corollary shows the joint continuity of the transition density function of $Y$ with respect to its symmetric measure $\ell$. 

\begin{corollary}
Let $\ell$ be the symmetric measure \eqref{EQ4LRG} of $Y$. Then $Y$ has a jointly continuous transition density function
$p^{Y}(t, r_1, r_2)$ with respect to $\ell$, i.e. $\mathbb{P}^Y_{r_1}(Y_t\in dr_2)=p^{Y}(t, r_1, r_2)\ell(dr_2)$ for all $t> 0$ and $r_1,r_2\in \mathbb{R}$, and $p^Y$ is jointly continuous on $(0,\infty)\times \bR\times \bR$.
\end{corollary}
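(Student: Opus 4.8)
The plan is to obtain $p^Y$ from the density $\wh p^Y$ already constructed in Lemma~\ref{short-time-HKE-radial} by a mere change of reference measure. Both $\ell$ and $\wh\ell$ are fully supported Radon measures on $\bR$ that are absolutely continuous with respect to Lebesgue measure on each half-line and assign no mass to $\{0\}$, so they are mutually absolutely continuous. Writing $w:=d\wh\ell/d\ell$, the identity $\bP^Y_{r_1}(Y_t\in dr_2)=\wh p^Y(t,r_1,r_2)\wh\ell(dr_2)=\wh p^Y(t,r_1,r_2)w(r_2)\,\ell(dr_2)$ will show at once that
\[
p^Y(t,r_1,r_2):=\wh p^Y(t,r_1,r_2)\cdot w(r_2)
\]
is a transition density of $Y$ with respect to $\ell$. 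The entire task then reduces to exhibiting a continuous, strictly positive version of $w$ on all of $\bR$, since the joint continuity of $\wh p^Y$ on $(0,\infty)\times\bR\times\bR$ combined with continuity of $w$ immediately delivers the joint continuity of $p^Y$.

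First I would compute $w$ explicitly from \eqref{EQ4LRG} and the definition of $\wh\ell$. On $(0,\infty)$ we have $\ell(dr)=\pi^{-1}\re^{-2\gamma r}dr$ and $\wh\ell(dr)=\tfrac{2}{1-\kappa}dr$, so $w(r)=\tfrac{2\pi}{1-\kappa}\re^{2\gamma r}$; on $(-\infty,0)$ we have $\ell(dr)=\tp\rho(-r)dr$ and $\wh\ell(dr)=\tfrac{2}{1+\kappa}dr$, so $w(r)=\tfrac{2}{(1+\kappa)\tp\rho(-r)}$. Under \textbf{(ACP)} the map $r\mapsto\rho(-r)$ is continuous and strictly positive, whence $w$ is continuous and strictly positive on each open half-line.

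The one delicate point, which I expect to be the heart of the argument, is continuity of $w$ at the junction $r=0$, where the two one-sided expressions must reconcile. Using $\kappa=(1-\pi\tp\rho(0))/(1+\pi\tp\rho(0))$ one finds $1-\kappa=2\pi\tp\rho(0)/(1+\pi\tp\rho(0))$ and $1+\kappa=2/(1+\pi\tp\rho(0))$, so that
\[
\lim_{r\downarrow 0}w(r)=\frac{2\pi}{1-\kappa}=\frac{1+\pi\tp\rho(0)}{\tp\rho(0)},\qquad \lim_{r\uparrow 0}w(r)=\frac{2}{(1+\kappa)\tp\rho(0)}=\frac{1+\pi\tp\rho(0)}{\tp\rho(0)}
\]
coincide. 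Setting $w(0)$ equal to this common value is legitimate because $\ell$ carries no atom at $0$, and it renders $w$ continuous and strictly positive on all of $\bR$. Since $\wh\ell$ likewise has no atom at $0$, the density $p^Y$ is determined $\ell$-a.e., and the selected version $p^Y=\wh p^Y\cdot w$ is jointly continuous on $(0,\infty)\times\bR\times\bR$, which completes the proof.
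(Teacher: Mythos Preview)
Your proposal is correct and follows essentially the same approach as the paper: define $p^Y=\wh p^Y\cdot w$ with $w=d\wh\ell/d\ell$, and verify that the two one-sided expressions for $w$ match at $r=0$ using the explicit form of $\kappa$. The paper's proof is terser (it simply writes down the piecewise formula and asserts continuity at $r_2=0$), whereas you have carried out the matching computation explicitly; otherwise the arguments coincide.
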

\begin{proof}
It suffices to note that
\begin{equation*}
	p^Y(t,r_1,r_2):=\frac{2}{(1+\kappa)\tp \rho(-r_2)}\wh p^Y(t,r_1,r_2)\cdot 1_{\{r_2<0\}}+\frac{2\pi \re^{2\gamma |r_2|}}{1-\kappa}\wh p^Y(t,r_1,r_2)\cdot 1_{\{r_2\geq 0\}},
\end{equation*}
which is continuous at $r_2=0$. This completes the proof.\qed
\end{proof}

Now we turn to the proof of Theorem~\ref{main-thm}. 	
Lemma~\ref{short-time-HKE-radial} yields the two-sided estimate on $\wh{p}(t, x, y)$ when $x, y\in \fR_+$ since $M_t=\iota_+(-Y_t)$ when $M_t\in \fR_+$, and hence \eqref{EQ5CTE1} can be concluded.

\begin{proof}[Proof of \eqref{EQ5CTE1}]
Fix $x,y\in \fR_+$. Take arbitrary $0\leq a<b$, we have 
\[
\begin{aligned}
\int_{y\in \fR_+,a\leq |y|\leq b}p(t,x,y)\fm(dy)&=\mathbb{P}_x(M_t\in \fR_+, a\leq |M_t|\leq b)=\mathbb{P}^Y_{-|x|}(a \leq -Y_t\leq b)\\
&=\frac{2}{1+\kappa}\int_{-b}^{-a}\wh p^Y(t,-|x|, r)dr=\frac{2}{1+\kappa}\int_{a}^{b}\wh p^Y(t,-|x|, -r)dr.
\end{aligned}\]
It follows from $p(t,x,y)=p(t,x,\iota_+(|y|))$ and $\fm(dy)|_{\fR_+}=\tp\cdot m_+\circ \iota_+^{-1}(dy)=\tp\rho(|y|)d|y|$ that
\[
	\int_{y\in \fR_+, a\leq |y|\leq b}p(t,x,y)\fm(dy)=\tp\int_{a\leq |y|\leq b}p(t,x,\iota_+(|y|))\rho(|y|)d|y|
\]
and thus 
\begin{equation}\label{eq-p-rr-11}
p(t,x,y)=\frac{2}{(1+\kappa)\tp\rho(|y|)}\wh p^Y(t,-|x|,-|y|).
\end{equation}
Since $\left| |x|-|y| \right|=|x-y|$, 
Lemma~\ref{short-time-HKE-radial} immediately yields that
\begin{equation*}
\frac{c_1}{\sqrt{t}} \re^{-{c_2|x-y|^2}/{t}} \le \wh{p}^Y(t,-|x|,-|y|)\le\frac{c_3}{\sqrt{t}} \re^{-{c_4|x-y|^2}/{t}},
\qquad  t\in (0, T] \hbox{ and } x, y \in \fR_+.
\end{equation*}
Therefore the desired result \eqref{EQ5CTE1} follows from \eqref{eq-p-rr-11}. 
\qed \end{proof}



To prove \eqref{EQ5CTE2}, the crucial fact is that starting from $x\in \fR_+$ (resp. $y\in \fR^3$), $M$ must pass through the origin $\b0$ before reaching $y\in \fR^3$ (resp. $x\in \fR_+$). As usual $\sigma_{\b0}:=\inf\{t>0: M_t=\b0\}$ denotes the first hitting time of $\{\b0\}$ relative to $M$.  

\begin{proof}[Proof of \eqref{EQ5CTE2}]
Consider $x\in \fR_+$ and $y\in \fR^3$. We first note that in this case by the symmetry of $p(t, x, y)$ in $x$ and $y$,
\begin{equation*}
p(t,x,y) = p(t, y, x) =\int_{0}^t \IP_y(\sigma_{\b0}\in ds)p(t-s, \b0,x).
\end{equation*}
By the rotational invariance of three-dimensional distorted Brownian motion $X^3$, $\IP_y(\sigma_{\b0}\in ds)$ only depends
on $|y|$, therefore so does $y\mapsto p(t,x,y)$.
 For all $0\leq a<b$ and  $x\in \fR_+$, using polar coordinates we have
\begin{equation}\label{EQ5ABP}
\begin{aligned}
\frac{2}{1-\kappa}\int_a^b \wh{p}^{Y}(t,-|x|,r)dr
	&= \IP_{-|x|} ( a\leq Y_t\leq b) \\&= \IP_x ( M_t \in \fR^3 \hbox{ with }  a\leq |M_t | \leq b) 
\\&=  \int_{y\in {\fR^3}:  a \leq |y| \leq b}p(t,x,y)\fm(dy). 
\end{aligned}\end{equation}
Note that $\fm(dy)|_{\fR^3}=\left(h_{\rho,\gamma}(y)^2|y|^2d|y|d\sigma\right)\circ \iota_3^{-1}$, where $\sigma$ is the surface measure on the sphere $S^2$, and the density function $h_{\rho,\gamma}(y)^2=\frac{\re^{-2\gamma |y|}}{4\pi^2|y|^2}$ only depends on $|y|$ as well. Thus the last term in \eqref{EQ5ABP} is equal to
\[
	\int_{a\leq |y| \leq b}p(t,x,y)h_{\rho,\gamma}(y)^2|y|^2d|y|\int_{S^2}d\sigma=\int_{a\leq |y| \leq b}4\pi\cdot p(t,x,y)h_{\rho,\gamma}(y)^2|y|^2d|y|.
\]
 This yields 
\begin{equation}\label{e:retionship-radial-original}
\frac{2}{1-\kappa}\wh{p}^{Y}(t,-|x|,|y|) =
4\pi |y|^2p(t,x,y)h_{\rho,\gamma}(y)^2.
\end{equation}
By Lemma~\ref{short-time-HKE-radial} we can obtain that
\begin{equation}\label{EQ5CTE}
\frac{c_1}{\sqrt{t}}\re^{-c_2(|x|+|y|)^2/t} \le \wh{p}^Y(t,-|x|,|y|)\le
\frac{c_3}{\sqrt{t}}\re^{-c_4(|x|+|y|)^2/t}.
\end{equation}
In view of \eqref{EQ5XYX}, $|x-y|=|x|+|y|$ since $x\in \fR_+$ and $y\in \fR^3$. Eventually \eqref{EQ5CTE2} can be concluded from \eqref{e:retionship-radial-original} and \eqref{EQ5CTE}.
\qed
\end{proof}




Next we study the case that both $x$ and $y$ are in $\fR^3$. To continue, we first establish the explicit transition density function \eqref{EQ5QTX} for three-dimensional distorted Brownian motion $M^{3,\b0}$ killed upon hitting $\b0$.  Denote this transition density function by $q(t,x,y)$. In other words, for any non-negative function $f\geq 0$ on $\fR^3\setminus \{\b0\}$, 
\begin{equation*}
 \int_{\fR^3\setminus \{\b0\}}  q (t, x, y)  f(y) \mathfrak{m}(dy) = \IE_x \left( f(M_t); t< \sigma_{\b0} \right).
\end{equation*}
For $x=\b0$ or $y=\b0$, we make the convention $q(t,x,y):=0$.  The following result is the key ingredient of \eqref{EQ5CTE3}.

\begin{lemma}\label{HK-killed-dBM}
It holds that for $x,y\in \fR^3$ and $t>0$,
\begin{equation*}
q(t,x,y)=\re^{-\frac{\gamma^2}{2}t}\cdot \frac{\re^{-\frac{|x-y|^2}{2t}}}{(2\pi t)^{\frac{3}{2}}}\cdot \frac{1}{h_{\rho,\gamma} (x)h_{\rho,\gamma} (y)}. 
\end{equation*}
\end{lemma}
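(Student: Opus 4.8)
The plan is to recognize that $q$ is nothing but the transition density of the $h$-transformed Brownian motion ${}_hW^\gamma$ from \S\ref{SEC23}, so that the formula can be read off directly from the $h$-transform relation \eqref{EQ5HPG} once the underlying three-dimensional Gaussian kernel is substituted.

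First I would pin down what $M^{3,\b0}$ is. By condition (ii) in Definition~\ref{definition-dBMVD}, the part process of $M$ on $\fR^3\setminus\{\b0\}$ agrees with the part process $M^{3,\b0}$ of $M^3=\iota_3(X^3)$ on $\fR^3\setminus\{\b0\}$. Carrying this back through the isomorphism $\iota_3$, it is exactly the part process of $X^3$ on $\bR^3\setminus\{0\}$, which Theorem~\ref{THM3}(1) identifies with ${}_hW^\gamma$, the process associated to the sub-Markovian semigroup ${}_hR^\gamma_t$.

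The second step is the actual computation of the density of ${}_hR^\gamma_t$ against $m_\gamma(dy)=\psi_\gamma(y)^2\,dy$. Since the origin is polar for three-dimensional Brownian motion $W$, its semigroup $R_t$ puts no mass on $\{0\}$ and possesses the Gaussian density $(2\pi t)^{-3/2}\re^{-|x-y|^2/(2t)}$ with respect to Lebesgue measure (recall $\cL_{-\infty}=\tfrac12\Delta$). Inserting this into \eqref{EQ5HPG}, for $x\in\bR^3\setminus\{0\}$,
\[
{}_hR^\gamma_t(x,dy)=\re^{-\frac{\gamma^2}{2}t}\,\frac{\psi_\gamma(y)}{\psi_\gamma(x)}\,\frac{1}{(2\pi t)^{3/2}}\,\re^{-\frac{|x-y|^2}{2t}}\,dy,
\]
whence, dividing by $m_\gamma(dy)=\psi_\gamma(y)^2\,dy$, the transition density of ${}_hW^\gamma$ relative to $m_\gamma$ is
\[
{}_hp^\gamma_t(x,y)=\re^{-\frac{\gamma^2}{2}t}\,\frac{\re^{-|x-y|^2/(2t)}}{(2\pi t)^{3/2}}\,\frac{1}{\psi_\gamma(x)\psi_\gamma(y)}.
\]
This is the direct analogue of the formula in Remark~\ref{RM1}, with the full $\cL_\gamma$-kernel $r^\gamma_t$ replaced by the Brownian kernel, reflecting the killing at $0$.

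Finally I would transport the identity back along $\iota_3$, using $\fm_3=m_\gamma\circ\iota_3^{-1}$ and $h_{\rho,\gamma}|_{\fR^3}=\psi_\gamma\circ\iota_3^{-1}$ from \eqref{EQ3HAG}, to conclude $q(t,x,y)={}_hp^\gamma_t(x,y)$, which is precisely the asserted expression; substituting $\psi_\gamma(x)=\re^{-\gamma|x|}/(2\pi|x|)$ and simplifying the constant $4\pi^2/(2\pi)^{3/2}=\sqrt{2\pi}$ also confirms agreement with the earlier display \eqref{EQ5QTX}. The argument is essentially a transfer of known objects, so there is no serious obstacle; the only point demanding care is the first step---the clean identification of $M^{3,\b0}$ with ${}_hW^\gamma$ under $\iota_3$ and the corresponding matching of the reference measures $m_\gamma$ and $\fm_3$---after which the density is merely read off from the explicit Gaussian kernel.
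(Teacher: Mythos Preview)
Your proposal is correct and follows essentially the same approach as the paper: identify $M^{3,\b0}$ with $\iota_3({}_hW^\gamma)$ via Theorem~\ref{THM3}(1), then read off the density directly from the $h$-transform formula \eqref{EQ5HPG} with the Gaussian kernel. The paper's proof is in fact just a one-line invocation of these two ingredients, while you have written out the intermediate density computation explicitly.
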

\begin{proof}
Theorem~\ref{THM3} tells us that $M^{3,\b0}$ is identified with $\iota_3({}_hW^\gamma)$, and it suffices to note that the transition semigroup of ${}_hW^\gamma$ is defined by \eqref{EQ5HPG}. \qed
\end{proof}

Now we have a position to complete the proof of \eqref{EQ5CTE3}. 

\begin{proof}[Proof of \eqref{EQ5CTE3}]
Consider $x,y\in \fR^3$. Note that starting from $x$, before hitting $\b0$, $M$ has the same distributions as those for $\iota_3(X^3)$, where $X^3$ is the three-dimensional dBM appearing in \S\ref{SEC2-1}.
Thus  $q(t,x,y)$ gives the probability density
that $M$ starting from $x$ hits $y$ at time $t$ without hitting $\b0$. 
As a consequence, 
\begin{equation}\label{decomposition-p}
\bar{q}(t,x,y):=p(t,x,y)-q(t,x,y),\quad x,y\in \fR^3 
\end{equation}
is the probability density for $M$ starting from $x$ hits $\b0$ before ending up at $y$ at time $t$, and this yields
\begin{equation*}
\bar{q}(t,x,y) =
\int_{0}^t p(t-s, \b0,y) \IP_x (\sigma_{\b0}\in ds).
\end{equation*}
As mentioned in the proof of \eqref{EQ5CTE2}, $ p(t-s, \b0,y)$
is  a function in $y$ depending only on $|y|$.  Therefore so is $y\mapsto
\bar{q}(t,x,y)$. Since $\bar q(t,x,y)=\bar{q}(t,y,x)$, $x\mapsto \bar{q}(t,x,y)$ also depends only on $|x|$. 
For any $b>a\geq 0$, it follows that
\[
\begin{aligned}
\IP_x \left( \sigma_{\b0}<t, \, M_t \in \fR^3 \hbox{ with } a\leq |M_t| \leq b\right)
&=\int_{a\le |y|\le b}\bar{q} (t,x,y)\mathfrak{m}(dy)
\\&=4\pi\int_{a\le |y|\le b}\bar{q} (t,x,y)h_{\rho,\gamma}(y)^2 |y|^2 d|y|.
\end{aligned}\]
On the other hand, $\IP_x \left( \sigma_{\b0}<t, \, M_t \in \fR^3 \hbox{ with } a\leq |M_t| \leq b\right)$ is also equal to
\[
\IP_{|x|}^{Y} \left( \sigma_{\b0}<t, a\leq |Y_t| \leq b\right) 
= \frac{2}{1-\kappa}\int_0^t \left(\int_a^b \wh{p}^{Y}(t-s, 0, r) dr \right) \IP_{|x|}^{Y}
\left(\sigma_{\b0} \in ds\right) .
\]
This yields $4\pi |y|^2 \bar{q}(t,x,y)h_{\rho,\gamma}(y)^2= \frac{2}{1-\kappa}\int_0^t \wh{p}^{Y}(t-s, 0, |y|)
\IP_{|x|}^{Y} \left(\sigma_{\b0} \in ds\right)$ and 
it follows from Lemma~\ref{short-time-HKE-radial} that
\[
\begin{aligned}
4\pi |y|^2 \bar{q}(t,x,y)h_{\rho,\gamma}(y)^2 & \leq \int_0^t \frac{\bar{c}_1}{\sqrt{t-s}}\re^{-\bar c_2|y|^2/(t-s)}
\IP_{|x| }^{Y} \left(\sigma_{\b0} \in ds\right)
\\
&\le \bar c_3 \int_0^t \wh{p}^{Y}(t-s, 0, -\bar c_4|y|)
\IP_{|x|}^{Y} \left(\sigma_{\b0} \in ds\right)
\\
&=\bar c_3\; \wh{p}^{Y}(t, |x|, - \bar c_4 |y|)  
\\
 &\le \frac{\bar c_5}{\sqrt{t}}\re^{- \bar c_6 \left(|x|+|y|\right)^2/t}.
\end{aligned}\]
By a piece of similar argument, one can  show that 
\begin{equation*}
4\pi |y|^2 \bar{q}(t,x,y)h_{\rho,\gamma}(y)^2  \ge \frac{ \bar c_7}{\sqrt{t}}\re^{- \bar c_8\left(|x|+|y|\right)^2/t}.
\end{equation*}
In other words, we have
\begin{equation*}
\frac{\bar c_7}{4\pi |y|^2\sqrt{t}}\re^{- \bar c_8\left(|x|+|y|\right)^2/t}
\leq  \bar{q}(t,x,y)h_{\rho,\gamma}(y)^2 \leq \frac{\bar c_5}{4\pi |y|^2\sqrt{t}}\re^{- \bar c_6 \left(|x|+|y|\right)^2/t}.
\end{equation*}
Since $h_{\rho,\gamma}(y)^2=\frac{\re^{-2\gamma |y|}}{4\pi^2|y|^2}$, this yields
\begin{equation}\label{HKE-q-bar}
\frac{\pi\bar c_7}{\sqrt{t}}\re^{2\gamma |y|-[ \bar c_8\left(|x|+|y|\right)^2/t]}
\leq  \bar{q}(t, x, y) \leq \frac{\pi\bar c_5}{\sqrt{t}}\re^{2\gamma |y|-[ \bar c_6 \left(|x|+|y|\right)^2/t]}.
\end{equation}
Combining \eqref{HKE-q-bar} with Lemma~\ref{HK-killed-dBM}, also in view of \eqref{decomposition-p}, we get for $x,y\in \fR^3$ that 
\begin{align*}
\nonumber &\frac{\pi\bar c_7}{\sqrt{t}}\re^{2\gamma |y|-[ \bar c_8\left(|x|+|y|\right)^2/t]}+q(t,x,y) \le p(t,x,y)
\le\frac{\pi\bar c_5}{\sqrt{t}}\re^{2\gamma |y|-[ \bar c_6 \left(|x|+|y|\right)^2/t]}+q(t,x,y). 
\end{align*}
 This completes the proof of \eqref{EQ5CTE3}.
\qed \end{proof}

Finally we prove the joint continuity of $p(t,x,y)$. 

\begin{proof}[Proof of joint continuity]
Clearly, \eqref{eq-p-rr-11} and \eqref{e:retionship-radial-original} tell us
\begin{equation}\label{eq-expression-p-12}
p(t,x,y)=\left\lbrace 
\begin{aligned}
&\frac{2}{(1+\kappa)\tp\rho(|y|)}\wh p^Y(t,-|x|,-|y|), \quad x,y\in \fR_+, \\
&\frac{2\pi}{1-\kappa}\re^{2\gamma|y|}\wh{p}^{Y}(t,-|x|,|y|), \quad \qquad x\in \fR_+,y\in \fR^3.
\end{aligned}
\right.
\end{equation}
Since $p(t,x,y)=p(t,y,x)$, it is straightforward to verify the joint continuity for $x\in \fR_+,y\in E$ and $x\in E, y\in \fR^3$. Now consider the case $x,y\in \fR^3$. Note that $q$ is jointly continuous by its explicit expression \eqref{EQ5QTX}. It suffices to obtain the joint continuity of $\bar{q}(t,x,y)=p(t,x,y)-q(t,x,y)$. To accomplish this, take $0\leq a<b$ and we have
\[
	\bP_x(M_t\in \fR^3, a\leq |M_t|\leq b)=\int_{a\leq |y|\leq b}q(t,x,y)\fm(dy)+\int_{a\leq |y|\leq b}\bar{q}(t,x,y)\fm(dy).
\]
The left hand side is also equal to
\[
\bP^Y_{|x|}(a\leq Y_t\leq b)=\frac{2}{1-\kappa}\int_a^b \wh p^Y(t,|x|,r)dr.
\]
A straightforward computation yields 
\[
\int_{a\leq |y|\leq b}q(t,x,y)\fm(dy)=\frac{\re^{-\frac{\gamma^2t}{2}}}{(2\pi t)^{3/2}}|x|\re^{\gamma|x|-\frac{|x|^2}{2t}}\int_a^b |y|\re^{-\gamma |y|-\frac{|y|^2}{2t}} \chi\left(\frac{|x||y|}{t}\right) d|y|,
\]
where $\chi(a):=\int_0^\pi \re^{a\cos \theta}\sin \theta d\theta$ is clearly a continuous function in $a\in \bR$. Hence we can obtain for $x,y\in \fR^3$,
\begin{equation}\label{eq-bar-q}
\bar{q}(t,x,y)=\frac{2\pi}{1-\kappa}\re^{2\gamma|y|}\wh p^Y(t,|x|,|y|)-\frac{\pi\re^{-\frac{\gamma^2t}{2}}}{(2\pi t)^{3/2}}|x|\re^{\gamma|x|-\frac{|x|^2}{2t}} |y|\re^{\gamma |y|-\frac{|y|^2}{2t}} \chi\left(\frac{|x||y|}{t}\right). 
\end{equation}
The joint continuity of $\bar{q}$ is obvious by this explicit expression. This completes the proof. 
\end{proof}
\begin{remark}
Note that $\bar{q}(t,x,y)$ in \eqref{eq-bar-q} depends only on $|x|$ and $|y|$. 
It is worth pointing out that for $x,y\in \fR^3$, $p(t,x,y)$ does not depend on $|x|$ or $|y|$ only, since  neither does $q(t,x,y)$. 
\end{remark}

\bibliographystyle{abbrv}
\bibliography{DBMV4}

\begin{thebibliography}{10}

\bibitem{AGH05}
S.~Albeverio, F.~Gesztesy, R.~H{\o}egh-Krohn, and H.~Holden.
\newblock {\em {Solvable models in quantum mechanics}}.
\newblock AMS Chelsea Publishing, Providence, RI, second edition, 2005.

\bibitem{AHS77}
S.~Albeverio, R.~H{\o}egh-Krohn, and L.~Streit.
\newblock {Energy forms, Hamiltonians, and distorted Brownian paths}.
\newblock {\em J. Math. Phys.}, 18(5):907--917, 1977.

\bibitem{CF}
Z.-Q. Chen and M.~Fukushima.
\newblock {\em {Symmetric Markov processes, time change, and boundary theory}}.
\newblock Princeton University Press, Princeton, NJ, 2012.

\bibitem{CF15}
Z.-Q. Chen and M.~Fukushima.
\newblock {One-point reflection}.
\newblock {\em Stochastic Process. Appl.}, 125(4):1368--1393, 2015.

\bibitem{CL}
Z.-Q. Chen and S.~Lou.
\newblock {Brownian motion on some spaces with varying dimension}.
\newblock {\em Ann. Probab.}, 47(1):213--269, 2019.

\bibitem{ChenZ}
Z.-Q. Chen and Z.~Zhao.
\newblock {Diffusion processes and second order elliptic operators with
  singular coefficients for lower order terms}.
\newblock {\em Math. Ann.}, 302:323--357, 1995.

\bibitem{CKMV2}
M.~Cranston, L.~Koralov, S.~Molchanov, and B.~Vainberg.
\newblock {Continuous model for homopolymers}.
\newblock {\em J. Funct. Anal.}, 256(8):2656--2696, 2009.

\bibitem{CKMV1}
M.~Cranston, L.~Koralov, S.~Molchanov, and B.~Vainberg.
\newblock {A solvable model for homopolymers and self-similarity near the
  critical point}.
\newblock {\em Random Oper. Stochastic Equations}, 18(1):73--95, 2010.

\bibitem{CM}
M.~Cranston and S.~Molchanov.
\newblock {On the critical behavior of a homopolymer model}.
\newblock {\em Sci. China Math.}, 62(8):1463--1476, 2019.

\bibitem{E90}
K.~B. Erickson.
\newblock {Continuous extensions of skew product diffusions}.
\newblock {\em Probab. Theory Related Fields}, 85(1):73--89, 1990.

\bibitem{FL17}
P.~J. Fitzsimmons and L.~Li.
\newblock {On the Dirichlet form of three-dimensional Brownian motion
  conditioned to hit the origin}.
\newblock {\em Sci. China Math.}, 62(8):1477--1492, 2019.

\bibitem{F99}
G.~B. Folland.
\newblock {\em {Real analysis}}.
\newblock Pure and Applied Mathematics (New York). John Wiley {\&} Sons, Inc.,
  New York, second edition, 1999.

\bibitem{F14}
M.~Fukushima.
\newblock {On general boundary conditions for one-dimensional diffusions with
  symmetry}.
\newblock {\em J. Math. Soc. Japan}, 66(1):289--316, 2014.

\bibitem{FO89}
M.~Fukushima and Y.~Oshima.
\newblock {On the skew product of symmetric diffusion processes}.
\newblock {\em Forum Math.}, 1(2):103--142, 1989.

\bibitem{FOT}
M.~Fukushima, Y.~Oshima, and M.~Takeda.
\newblock {\em {Dirichlet forms and symmetric Markov processes}}.
\newblock Walter de Gruyter {\&} Co., Berlin, 2011.

\bibitem{H81}
J.~M. Harrison and L.~A. Shepp.
\newblock {On skew Brownian motion}.
\newblock {\em Ann. Probab.}, 9(2):309--313, 1981.

\bibitem{K94}
T.~Kilpel{\"a}inen.
\newblock {Weighted Sobolev Spaces and Capacity}.
\newblock {\em Ann. Acad. Sci. Fenn. Ser. A I Math.}, 19(1):95--113, 1994.

\bibitem{L18}
L.~Li.
\newblock {On general skew Brownian motions}.
\newblock {\em arXiv: 1812.08415}.

\bibitem{LX19}
L.~Li and X.~Li.
\newblock {Dirichlet forms and polymer models based on stable processes}.
\newblock {\em Stochastic Process. Appl., to appear.}

\bibitem{LY172}
L.~Li and J.~Ying.
\newblock On symmetric linear diffusions.
\newblock {\em Trans. Amer. Math. Soc.}, 371(8):5841--5874, 2019.

\bibitem{Lou}
S.~Lou.
\newblock {Brownian motion with drift on spaces with varying dimension}.
\newblock {\em Stochastic Process. Appl.}, 129(6):2086--2129, 2019.

\bibitem{RY}
D.~Revuz and M.~Yor.
\newblock {\em {Continuous martingales and Brownian motion}}.
\newblock Springer-Verlag, Berlin, 1999.

\bibitem{RT15}
M.~R{\"o}ckner and G.~Trutnau.
\newblock {About the infinite dimensional skew and obliquely reflected
  Ornstein-Uhlenbeck process}.
\newblock {\em Infin. Dimens. Anal. Quantum Probab. Relat. Top.},
  18(4):1550031--1550055, 2015.

\bibitem{S88}
M.~Sharpe.
\newblock {\em {General theory of Markov processes}}.
\newblock Academic Press, Inc., Boston, MA, 1988.

\bibitem{T14}
M.~Takeda.
\newblock Criticality and subcriticality of generalized schrödinger forms.
\newblock {\em Illinois J. Math.}, 58(1):251--277, 2014.

\bibitem{YZ10}
J.~Ying and M.~Zhao.
\newblock {The uniqueness of symmetrizing measure of Markov processes}.
\newblock {\em Proc. Amer. Math. Soc.}, 138(6):2181--2185, 2010.

\bibitem{Z1}
Q.~S. Zhang.
\newblock {Gaussian bounds for the fundamental solutions of $\nabla(A\nabla
  u)+B\nabla u-u_{t}=0$}.
\newblock {\em Manuscripta Math.}, 93(3):381--390, 1997.

\end{thebibliography}


\end{document}